\newtheorem{hypothesis}[theorem]{Hypothesis}
\definecolor{HUblue}{rgb}{0,0.2157,0.4235}
\definecolor{HUred}{rgb}{0.5412,0.0588,0.0784}
\definecolor{HUsand}{rgb}{0.8235,0.7529,0.4039}
\definecolor{HUgreen}{rgb}{0,0.3412,0.1725}
\colorlet{structure}{HUblue}
\numberwithin{figure}{section}
\numberwithin{equation}{section}
\numberwithin{theorem}{section}
\numberwithin{table}{section}
\def\Xint#1{\mathchoice
{\XXint\displaystyle\textstyle{#1}}%
{\XXint\textstyle\scriptstyle{#1}}%
{\XXint\scriptstyle\scriptscriptstyle{#1}}%
{\XXint\scriptscriptstyle\scriptscriptstyle{#1}}%
\!\int}
\def\XXint#1#2#3{{\setbox0=\hbox{$#1{#2#3}{\int}$}
\vcenter{\hbox{$#2#3$}}\kern-.5\wd0}}
\newcommand{\intmean}{\Xint-}
\newlength{\raisebulletlen}
\newcommand\pbullet{\raisebox{\raisebulletlen}{\,\scriptsize$\bullet$}\,}
\newcommand\Keyword[1]{{\bfseries #1}}
\newcommand\Do{\enskip\Keyword{do}}
\newcommand\Input{\noindent\Keyword{Input:}\enskip}
\newcommand\Od{\enskip\Keyword{od}}
\newcommand\Output{\noindent\Keyword{Output:}\enskip}
\newcommand\R{\mathbb R}
\newcommand\bbS{\mathbb S}
\newcommand\N{\mathbb N}
\newcommand\Ecal{\mathcal E}
\newcommand\Rcal{\mathcal R}
\newcommand\Tcal{\mathcal T}
\renewcommand\iff{\;\Leftrightarrow\;}
\DeclareMathOperator*\argmin{argmin}
\DeclareMathOperator\Div{div}
\DeclareMathOperator\grad{\nabla}
\DeclareMathOperator\gradNC{\grad_\NC}
\DeclareMathOperator\Grad{D\!}
\DeclareMathOperator\Hess{\operatorname{D}^2\!}
\DeclareMathOperator\id{id}
\let\inf\relax
\DeclareMathOperator*\inf{\vphantom{p}inf}
\DeclareMathOperator\Mid{mid}
\DeclareMathOperator\Res{\Rcal}
\DeclareMathOperator\sign{sign}
\newcommand\abs[1]{\vert #1 \vert}
\newcommand\dual[1]{\langle #1 \rangle}
\newcommand\Rsym\bbS
\newcommand\dx{\, \mathrm{d} x}
\newcommand\ds{\, \mathrm{d} s}
\newcommand\restrict[1]{\big\vert_{#1}}
\newcommand\sphere[1]{{\mathcal{S}(#1)}}
\newcommand\Dom{{\Omega}}
\newcommand\Tri{\Tcal}
\newcommand\Edges{\Ecal}
\newcommand\Marked{\mathcal{M}}
\newcommand\LSfun{{LS}}
\newcommand\Ltwo{{L^2(\Dom)}}
\newcommand\ndof{\texttt{ndof}}
\newcommand\Norm[2]{\Vert#1\Vert_{#2}}
\newcommand\NormEnergy[1]{\vvvert #1 \vvvert}
\newcommand\NormH[3]{\Norm{#1}{H^{#2}(#3)}}
\newcommand\NormHDiv[2]{\Norm{#1}{H(\Div,#2)}}
\newcommand\NormHDivDom[1]{\NormHDiv{#1}{\Omega}}
\newcommand\NormL[3]{\Norm{#1}{L^{#2}({#3})}}
\newcommand\NormLz[2]{\NormL{#1}{2}{#2}}
\newcommand\NormLzDom[1]{\NormLz{#1}{\Dom}}
\newcommand\Conf{{\textup{C}}}
\newcommand\CR{{\textup{CR}}}
\newcommand\Friedrichs{{\textup{F}}}
\newcommand\loc{{\textup{loc}}}
\newcommand\LS{{\textup{LS}}}
\newcommand\NC{{\textup{NC}}}
\newcommand\RT{{\textup{RT}}}
\newcommand\sym{{\textup{sym}}}
\tikzset{element/.style={
  scale=2,
  very thick,
  line join=round,
  line cap=round
}}
\tikzset{slopetriangle/.style={
  bottom color=black!20,
  middle color=black!5,
  top color=white,
  draw=black
}}
\tikzset{ref/.style={
  decoration={border,amplitude=5pt,segment length=4pt},
  preaction={decorate,draw,structure,semithick}
}}
\pgfplotsset{
  % width=.67\linewidth,
  height=.40\textheight,
  grid style={densely dotted,semithick},
  legend style={
    legend columns=1,
    legend pos=outer north east,
    font=\footnotesize
  },
  compat=newest % compatibility for old pgfplots versions; alternatively: 1.8
}
\newcommand\drawslopetriangle[3]{
  \pgfkeys{/pgf/fpu=true}
  \pgfmathparse{10*#2}\let\rightcoordinate\pgfmathresult;
  \pgfmathparse{10^(#1)*#3}\let\topcoordinate\pgfmathresult;
  \pgfkeys{/pgf/fpu=false}
  \path[draw] (axis cs: #2, #3) node[shape=coordinate] (BL) {};
  \path[draw] (axis cs: +\rightcoordinate, #3) node[shape=coordinate] (BR) {};
  \path[draw] (axis cs: #2, +\topcoordinate) node[shape=coordinate] (TL) {};
  \shadedraw[slopetriangle]
    (TL) -- (BL)
    node[midway, left] {\(#1\)}
    -- (BR)
    node[midway, below] {\(1\)}
    -- cycle;
}
\newcommand\drawswappedslopetriangle[3]{
  \pgfkeys{/pgf/fpu=true}
  \pgfmathparse{#2/10}\let\leftcoordinate\pgfmathresult;
  \pgfmathparse{10^(-#1)*#3}\let\bottomcoordinate\pgfmathresult;
  \pgfkeys{/pgf/fpu=false}
  \path[draw] (axis cs: #2, #3) node[shape=coordinate] (TR) {};
  \path[draw] (axis cs: +\leftcoordinate, #3) node[shape=coordinate] (BR) {};
  \path[draw] (axis cs: #2, +\bottomcoordinate) node[shape=coordinate] (TL) {};
  \shadedraw[slopetriangle]
    (BR) -- (TR)
    node[midway, above] {\(1\)}
    -- (TL)
    node[midway, right] {\(#1\)}
    -- cycle;
}
\pgfplotsset{
    colormap={parula}{
        rgb=(0.2081,0.1663,0.5292)
        rgb=(0.2116,0.1898,0.5777)
        rgb=(0.2123,0.2138,0.627)
        rgb=(0.2081,0.2386,0.6771)
        rgb=(0.1959,0.2645,0.7279)
        rgb=(0.1707,0.2919,0.7792)
        rgb=(0.1253,0.3242,0.8303)
        rgb=(0.0591,0.3598,0.8683)
        rgb=(0.0117,0.3875,0.882)
        rgb=(0.006,0.4086,0.8828)
        rgb=(0.0165,0.4266,0.8786)
        rgb=(0.0329,0.443,0.872)
        rgb=(0.0498,0.4586,0.8641)
        rgb=(0.0629,0.4737,0.8554)
        rgb=(0.0723,0.4887,0.8467)
        rgb=(0.0779,0.504,0.8384)
        rgb=(0.0793,0.52,0.8312)
        rgb=(0.0749,0.5375,0.8263)
        rgb=(0.0641,0.557,0.824)
        rgb=(0.0488,0.5772,0.8228)
        rgb=(0.0343,0.5966,0.8199)
        rgb=(0.0265,0.6137,0.8135)
        rgb=(0.0239,0.6287,0.8038)
        rgb=(0.0231,0.6418,0.7913)
        rgb=(0.0228,0.6535,0.7768)
        rgb=(0.0267,0.6642,0.7607)
        rgb=(0.0384,0.6743,0.7436)
        rgb=(0.059,0.6838,0.7254)
        rgb=(0.0843,0.6928,0.7062)
        rgb=(0.1133,0.7015,0.6859)
        rgb=(0.1453,0.7098,0.6646)
        rgb=(0.1801,0.7177,0.6424)
        rgb=(0.2178,0.725,0.6193)
        rgb=(0.2586,0.7317,0.5954)
        rgb=(0.3022,0.7376,0.5712)
        rgb=(0.3482,0.7424,0.5473)
        rgb=(0.3953,0.7459,0.5244)
        rgb=(0.442,0.7481,0.5033)
        rgb=(0.4871,0.7491,0.484)
        rgb=(0.53,0.7491,0.4661)
        rgb=(0.5709,0.7485,0.4494)
        rgb=(0.6099,0.7473,0.4337)
        rgb=(0.6473,0.7456,0.4188)
        rgb=(0.6834,0.7435,0.4044)
        rgb=(0.7184,0.7411,0.3905)
        rgb=(0.7525,0.7384,0.3768)
        rgb=(0.7858,0.7356,0.3633)
        rgb=(0.8185,0.7327,0.3498)
        rgb=(0.8507,0.7299,0.336)
        rgb=(0.8824,0.7274,0.3217)
        rgb=(0.9139,0.7258,0.3063)
        rgb=(0.945,0.7261,0.2886)
        rgb=(0.9739,0.7314,0.2666)
        rgb=(0.9938,0.7455,0.2403)
        rgb=(0.999,0.7653,0.2164)
        rgb=(0.9955,0.7861,0.1967)
        rgb=(0.988,0.8066,0.1794)
        rgb=(0.9789,0.8271,0.1633)
        rgb=(0.9697,0.8481,0.1475)
        rgb=(0.9626,0.8705,0.1309)
        rgb=(0.9589,0.8949,0.1132)
        rgb=(0.9598,0.9218,0.0948)
        rgb=(0.9661,0.9514,0.0755)
        rgb=(0.9763,0.9831,0.0538)
    }
}
\pgfplotsset{convergenceplot/.style={
  xlabel=ndof,
  ylabel=error,
  ymajorgrids=true,
}}
\pgfplotsset{cases/.style={
  only marks,
  mark=,
  nodes near coords,
  nodes near coords align={anchor=center},
  point meta=explicit symbolic
}}
\pgfplotsset{efficiencyplot/.style={
  ymin=1,
  xlabel=ndof,
  ylabel=efficiency indices,
  ymajorgrids=true
}}
\pgfplotsset{triangulationplot/.style={
  axis equal image,
  enlargelimits={abs=1em},
  xtick={-1,-.5,...,1},
  ytick={-1,-.5,...,1}
}}
\pgfplotsset{patchstyle/.style={
  patch,
  white,
  faceted color=HUblue,
  line width=.5pt,
  table/row sep=\\
}}
\pgfplotsset{quiverplot/.style={
  axis equal image,
  enlargelimits={abs=1em},
  colorbar,
  xtick={-1,-.5,...,1},
  ytick={-1,-.5,...,1}
}}
\pgfplotsset{quiverstyle/.style={
  point meta=\thisrow{c},
  quiver={u={\thisrow{u}/\thisrow{c}},
          v={\thisrow{v}/\thisrow{c}},
          colored,scale arrows=.15,
          every arrow/.append style={line width=.5pt}
         },
  -stealth
}}
\pgfplotsset{pressureplot/.style={
  axis equal image,
  enlargelimits={abs=1em},
  colorbar,
  xtick={-1,-.5,...,1},
  ytick={-1,-.5,...,1}
}}
\newcommand\changed[1]{{\color{black} #1}}
\date{20th April 2017}
\title{Nonlinear discontinuous Petrov-Galerkin methods%
  \thanks{The work has been written while the first author enjoyed the
      hospitality of the Hausdorff Research Institute of Mathematics in
      Bonn, Germany, during the Hausdorff Trimester
      Program \enquote{Multiscale Problems: Algorithms, Numerical Analysis and
      Computation}. The second and third author were supported by the
      Berlin Mathematical School.  The research of all four authors has
      been supported by the Deutsche
      Forschungsgemeinschaft in the Priority Program 1748 \enquote{Reliable
      simulation techniques in solid mechanics. Development of non-standard
      discretization methods, mechanical and mathematical
      analysis} under the project \enquote{Foundation and application of
      generalized mixed FEM towards nonlinear problems in solid mechanics}
    (CA 151/22-1 and WR 19/51-1).}}
\titlerunning{Nonlinear dPG methods}%
\author{C.~Carstensen, P.~Bringmann, F.~Hellwig, P.~Wriggers}
\authorrunning{Carstensen et al.}
\institute{%
  C.\ Carstensen, P.\ Bringmann, F.\ Hellwig
  \at
    Humboldt-Universit\"at zu Berlin, Unter den Linden~6,
    10099~Berlin, Germany.\\
    \email{\{cc,bringman,hellwigf\}@math.hu-berlin.de}
  \and
  P.\ Wriggers%
  \at
    Gottfried Wilhelm Leibniz Universit\"at Hannover, Institut f\"ur
    Kontinuumsmechanik, Appelstraße~11, 30167~Hannover, Germany.\\
    \email{wriggers@ikm.uni-hannover.de}
    %(\email{wriggers@ikm.uni-hannover.de}).}
}
\begin{document}

% TITLE
\maketitle

% ABSTRACT
\begin{abstract}
  The discontinuous Petrov-Galerkin method is a minimal residual method
  with broken test spaces and is introduced for a nonlinear model problem in
  this paper. Its lowest-order version applies to a nonlinear uniformly
  convex model example and is equivalently characterized as a mixed
  formulation, a reduced formulation, and a weighted nonlinear
  least-squares method. Quasi-optimal a~priori and reliable and efficient
  a~posteriori estimates are obtained for the abstract nonlinear dPG
  framework for the approximation of a regular solution. The variational
  model example allows for a built-in guaranteed error control despite
  inexact solve. The subtle uniqueness of discrete minimizers is monitored
  in numerical examples.
  \subclass{47H05,49M15,65N12,65N15,65N30}
  \keywords{discontinuous Petrov-Galerkin methods, generalized
    least-squares formulation, nonlinear model problem,
    convex energy minimization, a~posteriori error analysis}
\end{abstract}

\section{Introduction}
The discontinuous Petrov-Galerkin methodology (dPG) has recently been
introduced with the intention to design the optimal test spaces in a
Petrov-Galerkin scheme for maximal stability.
On the continuous level, the weak form of a PDE may assume the general form
$b(u,\cdot)=F$ with a unique solution $u$ in some real Banach space $X$ and
some bilinear form $b:X\times Y\to\R$ for some real Hilbert space $Y$ with
\changed{scalar product \(a: Y \times Y \to \R\) and}
a given right-hand side $F\in Y^*$, the dual to $Y$.
Well-posedness is understood to lead to an inf-sup condition on the
continuous level.
Given some discrete trial space $X_h\subset X$, the restriction
$b|_{X_h\times Y}$ clearly satisfies the inf-sup condition (even with a
possibly slightly better inf-sup constant) but it is less clear how to
choose the best trial space $M_h$, i.e. some subspace, $M_h\subset Y$ such
that
\begin{equation}\label{eq:intro_infsup}
0<\beta(X_h,M_h):= \inf_{x_h\in X_h} \sup_{y_h\in M_h} \frac{b(x_h,y_h)}{\Vert x_h \Vert_X \Vert y_h \Vert_Y}
\end{equation}
is maximal under the condition that $\dim(X_h) = \dim(M_h)$ is fixed.
The idealized dPG method computes the optimal test space utilizing some
Riesz representations in the infinite-dimensional Hilbert space
\(Y\) \cite{MR2837484}.
The practical realization utilizes, first, a test-search space $Y_h\subset Y$ with
dimension $n = \dim(Y_h)$ much larger than the dimension $m = \dim(X_h)$ of the trial space
$X_h$ and, second, a minimal residual method to compute the
discrete solution as a minimizer
\begin{equation}\label{eq:intro_min_res}
  x_h\in \argmin _{\xi_h\in X_h} \Vert F- b(\xi_h, \pbullet)\Vert_{Y_h^*}.
\end{equation}
The method is in fact equivalent to a Petrov-Galerkin scheme with the
bilinear form restricted to $X_h\times M_h$ for an appropriate subspace
$M_h \subset Y_h$ of dimension $m$ as pointed out in
\cite[Thm.~3.3]{MR3279489}.
Therefore, the large discrete space $Y_h$ (which is an input of the dPG scheme) is called
test-search space \cite{MR2916380} and the (implicit) test space \(M_h\) is not visible in \eqref{eq:intro_min_res}.

The computation of \(x_h\) in \eqref{eq:intro_min_res} is equivalent to
\changed{solving the normal equations and so possibly expansive.
This} guided Demkowicz and Gopalakrishnan \cite{MR3093480} to break the
norms in the test (and ansatz) spaces \cite{MR3521055}.
This allows a parallel computation of the dual norm separately  for each
individual element domain.
As it stands today, the term dPG abbreviates ``discontinuous
Petrov-Galerkin'' and stands for a {\em minimal residual method with broken test
or ansatz functions} and solely outlines a paradigm. The dPG
methodology allows various weak and ultra-weak formulations, where $X$ and
$Y$ are completely different and $b$ is not at all symmetric.
The least-squares finite element methods can be seen as a (degenerated)
subset of (an idealized) dPG with a degenerated test space in which the
Lebesgue norm can be evaluated exactly.

To the best knowledge of the authors, not much is known about nonlinear
versions of the methodology.
One first choice is to linearize the problem and then apply the dPG schemes
to the linear equations to generalize the Gauss-Newton method.
There exist already suggestions for nonlinear applications, in which there
are constraints plus a linear problem, e.g., for the contact problem in
\cite{1609.00765}.
\changed{Concepts of nonlinear dPG in fluid mechanics have been discussed
in \cite{MR3209958}.}
Another usage of the term {\em nonlinear} is in nonlinear approximation
theory and there is the contribution \cite{1511.04400} on linear problems with an attempt to
replace the Hilbert space $Y$ by some uniformly convex Banach space.

This paper introduces a direct {\em nonlinear dPG methodology} and replaces
the above bilinear form $b$ by some nonlinear mapping $b:X\times Y\to \R$,
which is linear and bounded in the second component to allow the
computation of the dual norm in the minimal residual method.
To stress the nonlinear dependence in the first component in $X$,
the notation in this papers follows \cite{MR731261} and separates
the linear components by a semi-colon so that the nonlinear dPG method replaces
$ b(\xi_h,\pbullet)$ in \eqref{eq:intro_min_res} by $ b(\xi_h;\pbullet)$.

The simplest case study for the nonlinear dPG methodology is an energy
minimization problem with some Hilbert space setting and a nonlinearity
with quadratic growth in the gradient.
The scalar model example of this paper stands for a larger class of Hencky
materials \cite[Sect.~62.8]{MR932255} and is the first model problem in
line towards real-life applications with a matrix-valued stress
$\sigma(F)$ given as a nonlinear function of some deformation gradient $F$
(such as the gradient $\nabla u$ of the displacement $u$) and the remaining
equilibration equation
\begin{equation}\label{eq:intro_model_problem}
f+\Div \sigma(\nabla u)=0 \quad\text{a.e. in }\Omega
\end{equation}
for some prescribed source term $f$ in the domain $\Omega$.
\changed{Although the existence of discrete solutions \(x_h\) to
\eqref{eq:intro_min_res} follows almost immediately, the closeness of
\(x_h\) to some continuous solution \(x\) is wildly open (cf.\
Remark~\ref{rem:kantorovich} below for a brief discussion).}

One critical point is the role of the stability condition
\eqref{eq:intro_infsup} in the nonlinear setting
for a regular solution and its low-order discretizations (as the most
natural first choice for nonlinear problems, partly because of limited
known regularity properties).
In  the situation of  the model scenario \eqref{eq:intro_model_problem},
the discrete stability follows from the stability of the continuous form
for piecewise constant $\nabla u_h$ and so the local discrete stability
simply follows from the linearization.

The overall structure of the nonlinear dPG of the type
\eqref{eq:intro_min_res} but for a nonlinear map $b$ with derivative $b'$
with respect to the first variable is also characterized as a nonlinear
mixed formulation with solution \((x_h,y_h)\in X_h\times Y_h\) to
\begin{equation*}
  \label{eq:mixed}
  \tag{M}
  \begin{aligned}
    a(y_h,\eta_h) + b(x_h; \eta_h)
    &= F(\eta_h)
    \quad\text{for all } \eta_h \in Y_h,
    \\
    b'(x_h; \xi_h, y_h)
    &= 0
    \quad\text{for all } \xi_h \in X_h.
  \end{aligned}
\end{equation*}
Another characterization in the lowest-order cases under consideration is
that as a weighted least-squares functional \changed{on Courant finite
element functions \(S^1_0(\Tri)\) with homogeneous Dirichlet boundary
values and the Raviart-Thomas finite element functions \(RT_0(\Tri)\)}
with some mesh-depending \changed{piecewise} constant weight \(S_0\in
P_0(\Tri;\R^{n\times n})\)
\begin{align*}
  (&u_\Conf, p_\RT)
  \in \argmin _{(v_\Conf,q_\RT) \in S^1_0(\Tri) \times RT_0(\Tri)}
  \Big(\Vert \Pi_0 f + \Div q_\RT \Vert_{L^2(\Omega)}^2\\
  &\phantom{{}\in{}}\quad+
  \Vert(I_{n\times n}+S_0)^{-1/2}
  \big(\Pi_0 q_\RT - \sigma(\nabla v_\Conf)
  + \Pi_0 (f(\id - \Mid(\Tri)))\big)\Vert_{L^(\Omega)}^2 \Big).
\end{align*}
This is already a new result even for the linear cases in
\cite{MR3576569,CP16a} and opens the door of a convergence analysis of
adaptive algorithms via a generalization of~\cite{MR3296614,CR16}.

This paper contributes the aforementioned  equivalent characterizations and
a first convergence analysis in the natural norms.
The a~priori result is local quasi-optimal convergence for the simple
model problem in that any discrete solution \(x_h\in X_h\) sufficiently
close to the exact regular solution \(x\in X\) satisfies
\[
  \Vert x-x_h \Vert_X
  \lesssim \inf_{\xi_h \in X_h} \Vert x-\xi_h \Vert_X.
\]

It has been discussed in \cite{MR3215064,MR3576569,CP16a} that the norm of
the computed residual \(\Vert y_h\Vert_Y = \Norm{F-b(v_C,q_\RT;\pbullet)}{Y_h^*}\) is almost a computable error estimator for linear problems
and this paper extends it to the a~posteriori error estimate
\begin{equation}\label{eq:intro_a_posteriori}
  \begin{aligned}
  \NormHDiv{ p- q_\RT}{\Omega}^2 + \NormEnergy{u-v_C}^2 &\approx \Norm{F-b(v_C,q_\RT;\pbullet)}{Y_h^*}^2\\
  &\quad+ \NormLzDom{(1-\Pi_0) f}^2 +  \NormLzDom{(1-\Pi_0)q_\RT}^2
  \end{aligned}
\end{equation}
for the nonlinear model problem \eqref{eq:intro_model_problem}. Since \(\Norm{F-b(v_C,q_\RT;\pbullet)}{Y_h^*}\) is the computable residual, this leads to built-in error control despite inexact solve: The discrete quantities \((v_C,q_\RT)\) in \eqref{eq:intro_a_posteriori} do \emph{not} need to solve the nonlinear dPG discrete problem.

The analysis is given for the primal version of the nonlinear dPG
\changed{for brevity} but applies to \changed{the} other formulations of
Subsection~\ref{sec:other_dpg} as well.
The results of this paper can be generalized, \changed{e.g., to the Hencky
material \cite[Sect.~62.8]{MR932255},} and then applied to more real life computational
challenges where the advantages of the dPG methodology are more striking.
\bigskip

The remaining parts of of this paper are organised as follows.
Section~\ref{sec:abstract_dpg} discusses an abstract framework for
different equivalent formulations of a dPG method for nonlinear problems
and develops an abstract a~priori estimate. Section~\ref{sec:model_problem}
presents a model problem with a dPG discretization.
Section~\ref{sec:analysis} analyses this discretization and gives proofs on
existence of a solution and an a~posteriori error estimate. Some numerical
examples in Section~\ref{sec:numerical_experiments} conclude the paper.
\bigskip

This paper employs standard notation of Sobolev and Lebesgue spaces
\(H^k(\Dom)\), \(H(\Div,\Dom)\), \(L^2(\Dom)\), and \(L^\infty(\Omega)\)
and the corresponding
spaces of vector- or matrix-valued functions \(H^k(\Dom;\R^n)\),
\(L^2(\Dom;\R^n)\), \(L^\infty(\Dom;\R^n)\), \(H^k(\Dom;\R^{n \times n})\),
\(H(\Div,\Dom;\R^{n \times n})\), \(L^2(\Dom;\R^{n \times n})\), and
\(L^\infty(\Dom;\R^{n \times n})\).
For any regular triangulation \(\Tri\) of \(\Omega\), let \(H^k(\Tri)
\coloneq \prod_{T \in \Tri} H^k(T) \coloneq \{v \in L^2(\Omega) \,|\,
\forall T \in \Tri,\, v\vert_T \in H^k(T)\}\) denote the piecewise
(or broken) Sobolev spaces and \((\gradNC v)\restrict{T} = \nabla (v\restrict{T})\) on \(T\in\Tri\) the piecewise gradient for \(v\in H^1(\Tri)\).
Let \(\vvvert{\pbullet}\vvvert \coloneq
\vert{\pbullet}\vert_{H^1(\Dom)} = \Vert{\grad
\pbullet}\Vert_{L^2(\Dom)}\) abbreviate the energy norm.
For every Hilbert space \(X\), let \((\pbullet, \pbullet)_X\) denote the
associated inner product and,
for every normed space \((X, \Vert \pbullet \Vert_X)\),
\(\sphere{X} \coloneq \{x \in X \,|\, \Vert x \Vert_X = 1\}\) the
sphere in \(X\).
The measure \(\vert \pbullet \vert\) is context-sensitive and refers to the
number of elements of some finite set or the length \(\vert E \vert\) of an
edge \(E\) or the area \(\vert T \vert\) of some triangle \(T\) and not just
the modulus of a real number or the Euclidean length of a vector.

Throughout the paper, \(A\lesssim B\) abbreviates the relation \(A \leq CB\)
with a generic constant \(0<C\), which does not depend on the mesh-size of
the underlying triangulation \(\Tri\) but
solely on the initial triangulation \(\Tri_0\); \(A\approx B\) abbreviates
\(A\lesssim B\lesssim A\), e.g. in \eqref{eq:intro_a_posteriori}.

\section{Abstract framework}\label{sec:abstract_dpg}
This section analyses an abstract nonlinear dPG method and presents an a~priori error estimate.
\subsection{Abstract nonlinear dPG}
For an open set \(D\neq \emptyset\) in a real Banach space \(X\) and a real
Hilbert space changed{\(Y\) with scalar product \(a: Y \times Y \to \R\)}, let
\(B\in C^1(D;Y^*)\) be a differentiable nonlinear map with Fr\'echet
derivative \(\Grad B(x)\in L(X;Y^*)\) at \(x \in D\).
With the duality bracket \(\langle \pbullet,\pbullet \rangle\) in \(Y\),
associate the nonlinear map \(b: X \times Y \to \R\),
\(b(x; \pbullet) \coloneq \langle B(x), \pbullet \rangle\), which is linear and bounded in
the second component. Let \(b'(x;\pbullet)\) abbreviate the derivative
\changed{\(\Grad B(x) \in L(X;Y^*)\)} with \(b'(x;\xi,\eta)\coloneq \langle
\Grad B(x;\xi),\eta\rangle\) for \(x\in D,\xi\in X,\eta \in Y\).
\bigskip

Given \(F \in Y^*\), let \(x \in D\) be a \emph{regular solution}
to the problem \(B(x) = F\) in \(Y^*\).
That means that \(x\) solves \(B(x) = F\) and the Fr\'echet derivative
\(\Grad B\) at \(x\) is a bijection from \(X\) to \(Y^*\).
The latter implies the inf-sup condition for the Fr\'echet derivative at the
regular solution \(x\), namely,
\begin{equation}
  \label{eq:continuous_inf_sup}
  0 < \beta(x)
  \coloneq
  \inf_{\xi \in \sphere{X}} \sup_{\eta \in \sphere{Y}} b'(x; \xi, \eta).
\end{equation}
The minimal residual formulation of the continuous problem seeks
\(x \in X\) with
\begin{equation}
  \label{eq:minres}
  x \in \argmin_{\xi \in D} \Vert F - B(\xi)\Vert_{Y^*}.
\end{equation}
The existence of a solution \(x\) to~\eqref{eq:minres} is immediate from
the assumption \(B(x) = F\).
In particular, the minimum is zero and any minimizer \(x\)
in~\eqref{eq:minres} solves \(B(x) = F\).
The situation is (in general) different on the discrete level with some
discrete subspaces \(X_h \subset X\) and \(Y_h \subset Y\), the
dPG scheme seeks a minimizer \(x_h \in D_h\coloneq X_h\cap D\) of the residual \(F - B(\xi_h)\)
in the norm of \(Y_h^*\),
\begin{equation}
  \tag{dPG}
  \label{eq:dpg}
  x_h \in \argmin_{\xi_h \in D_h} \Vert F - B(\xi_h) \Vert_{Y_h^*}.
\end{equation}
The existence of a solution to~\eqref{eq:dpg} requires further assumptions and follows in Proposition~\ref{prop:discrete_existence} for a model problem.
% (The proof considers a minimizing sequence and deduces its boundedness from
% the growth-condition.
% Any accumulation point of the sequence solves the
% finite-dimensional problem.)

\subsection{Derivation of nonlinear dPG}
A formal Lagrange ansatz leads to the minimization of the Lagrange
functional \(\mathcal{L}: D_h \times Y_h \times \R \to \R\) defined for \((x_h, y_h, \lambda)\in X_h\times Y_h \times \R\) by
\[
  \mathcal{L}(x_h, y_h, \lambda)
  \coloneq
  F(y_h) - b(x_h; y_h)
  - \frac\lambda2 \big( a(y_h, y_h) - 1 \big).
\]
The stationary points \(x_h \in D_h\), \(y_h \in Y_h\), and
\(\lambda \in \R\) of \(\mathcal{L}\) are
characterized by the first derivatives of \(\mathcal{L}\) with respect to each
argument in the sense that, for all \(\eta_h \in Y_h\) and \(\xi_h \in X_h\),
\begin{align*}
  \lambda a(y_h, \eta_h) + b(x_h; \eta_h) = F(\eta_h),\quad b'(x_h; \xi_h, y_h)= 0,\quad a(y_h, y_h) = 1.
\end{align*}
For \(\eta_h = y_h\), this implies \(\lambda = F(y_h) - b(x_h; y_h)\). The
substitution of \(y_h\) by \(\lambda y_h\) leads to a modified system of
equations.
The resulting mixed formulation of the nonlinear dPG method seeks
\(x_h \in X_h\) and \(y_h \in Y_h\) with
\begin{equation*}
  % already define above
  % \label{eq:mixed}
  % \tag{M}
  \begin{aligned}
    a(y_h,\eta_h) + b(x_h; \eta_h)
    &= F(\eta_h)
    \quad\text{for all } \eta_h \in Y_h,
    \\
    b'(x_h; \xi_h, y_h)
    &= 0
    \quad\text{for all } \xi_h \in X_h.
  \end{aligned}
\end{equation*}
Notice that this is known for linear problems (there, \(b=b'(x_h;\pbullet)\)) \cite[Section~2.3]{MR2916380}.

\subsection{Equivalent mixed formulation}
It is known in linear problems that the dPG method is equivalent to the mixed
problem \eqref{eq:mixed} and this is generalized in this subsection to the
nonlinear problem \(B(x) = F\) at hand.
Any local (or global) minimizer of \(\Phi(\xi_h)\coloneq \Vert F - B\xi_h
\Vert_{Y_h^*}^2/2\) is a stationary point of \(\Phi\).
\begin{definition}[stationary point]
  Any \(x_h\in D_h\coloneq D\cap X_h\) is a stationary point of the dPG
  discretization~\eqref{eq:dpg} if any directional derivative of
  \(\Phi(\xi_h)\coloneq \Vert F - B\xi_h \Vert_{Y_h^*}^2/2\) vanishes at
  \(x_h\), i.e.,\ \(\lim_{\delta\rightarrow 0}
  (\Phi(x_h+\delta\xi_h)-\Phi(x_h))/\delta = 0\) for all \(\xi_h\in X_h\).
\end{definition}
Stationary points are exactly the solutions to \eqref{eq:mixed}.
\begin{theorem}[\eqref{eq:dpg} \(\Leftrightarrow\) \eqref{eq:mixed}]
  \label{thm:mixed_formulation}
  (a) Suppose \(x_h\) is a stationary point of~\eqref{eq:dpg} and \(y_h\)
  is the residual's Riesz representation
  (i.e.\ \(a(y_h, \pbullet) = F - b(x_h; \pbullet)\)) in \(Y_h\).
  Then \((x_h, y_h)\) solves~\eqref{eq:mixed}.

  (b) Suppose that \((x_h, y_h)\) solves~\eqref{eq:mixed}, then \(x_h\) is
  a stationary point of~\eqref{eq:dpg}.
\end{theorem}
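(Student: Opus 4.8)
The plan is to linearise the square of the dual norm in \(\Phi\) by means of the Riesz isometry of the Hilbert space \(Y_h\) and then differentiate. Write \(R_h\colon Y_h^*\to Y_h\) for the Riesz map, characterised by \(a(R_h\ell,\eta_h)=\ell(\eta_h)\) for all \(\eta_h\in Y_h\), so that \(\Vert\ell\Vert_{Y_h^*}^2=a(R_h\ell,R_h\ell)\); this is well defined because \(Y_h\) is a (here finite-dimensional, in any case closed) subspace of \(Y\) and so a Hilbert space under the restriction of \(a\). For \(\xi_h\in D_h\) set \(G(\xi_h)\coloneq (F-B(\xi_h))\vert_{Y_h}\in Y_h^*\); then \(\Phi(\xi_h)=\tfrac12\,a(R_hG(\xi_h),R_hG(\xi_h))\). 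Since \(D\) is open in \(X\), the set \(D_h=D\cap X_h\) is open in \(X_h\), so for a fixed \(x_h\in D_h\) and any \(\xi_h\in X_h\) the curve \(\delta\mapsto x_h+\delta\xi_h\) remains in \(D_h\) for \(\vert\delta\vert\) small and \(\delta\mapsto\Phi(x_h+\delta\xi_h)\) is defined near \(\delta=0\).

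First I would compute the directional derivatives of \(\Phi\). Because \(B\in C^1(D;Y^*)\), the map \(\delta\mapsto B(x_h+\delta\xi_h)\) is differentiable at \(\delta=0\) with derivative \(\Grad B(x_h;\xi_h)\), so \(\delta\mapsto G(x_h+\delta\xi_h)\) is differentiable at \(\delta=0\) with derivative \(-b'(x_h;\xi_h,\pbullet)\in Y_h^*\). Composing with the bounded linear map \(R_h\) and with the smooth quadratic form \(\eta\mapsto\tfrac12 a(\eta,\eta)\), the chain rule gives
\[
  \lim_{\delta\to0}\frac{\Phi(x_h+\delta\xi_h)-\Phi(x_h)}{\delta}
  = a\big(R_hG(x_h),\,-R_h(b'(x_h;\xi_h,\pbullet))\big)
  = -\,b'\big(x_h;\xi_h,\,R_hG(x_h)\big),
\]
where the last equality uses the defining property of \(R_h\) together with the symmetry of \(a\), namely \(a(R_hG(x_h),R_h\ell)=\ell(R_hG(x_h))\) for \(\ell=b'(x_h;\xi_h,\pbullet)\). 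Consequently, \(x_h\) is a stationary point of~\eqref{eq:dpg} if and only if \(b'(x_h;\xi_h,R_hG(x_h))=0\) for all \(\xi_h\in X_h\).

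With this identity both assertions follow. For (a), if \(x_h\) is a stationary point and \(y_h\in Y_h\) denotes the Riesz representation of the residual, i.e.\ \(a(y_h,\pbullet)=F-b(x_h;\pbullet)\) on \(Y_h\), then by definition \(y_h=R_hG(x_h)\); the first line of~\eqref{eq:mixed} is exactly the defining equation of \(y_h\), and the second line \(b'(x_h;\xi_h,y_h)=0\) for all \(\xi_h\in X_h\) is precisely stationarity by the displayed formula. For (b), if \((x_h,y_h)\) solves~\eqref{eq:mixed}, the first line says exactly \(y_h=R_hG(x_h)\), so the displayed formula reads \(\lim_{\delta\to0}(\Phi(x_h+\delta\xi_h)-\Phi(x_h))/\delta=-b'(x_h;\xi_h,y_h)\), which vanishes for every \(\xi_h\in X_h\) by the second line; hence \(x_h\) is a stationary point.

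The only delicate point is the differentiation step: one must verify that \(\delta\mapsto\Phi(x_h+\delta\xi_h)\) is differentiable at \(0\) and that the chain rule applies. This rests entirely on the hypothesis \(B\in C^1(D;Y^*)\) (yielding differentiability of \(\delta\mapsto B(x_h+\delta\xi_h)\) with derivative \(\Grad B(x_h;\xi_h)=b'(x_h;\xi_h,\pbullet)\)) and on \(R_h\) being a bounded linear isometry \(Y_h^*\to Y_h\). Everything else is elementary linear algebra in \(Y_h\) and requires no further argument; in particular no inf--sup or regularity assumption enters this equivalence.
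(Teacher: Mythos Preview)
Your proof is correct and follows essentially the same approach as the paper: represent the residual via the Riesz map in \(Y_h\), differentiate \(\Phi\) along the curve \(x_h+\delta\xi_h\) using \(B\in C^1(D;Y^*)\), and identify the derivative as \(-b'(x_h;\xi_h,y_h)\). The only stylistic difference is that you obtain a single displayed derivative formula and invoke it symmetrically for both directions, whereas the paper introduces \(y_h(t)\) and \(\dot y_h\) explicitly and, for part~(b), re-derives the derivative via the alternative expression \(a(y_h(t),y_h(t))=F(y_h(t))-b(x_h(t);y_h(t))\); the substance is the same.
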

\begin{proof}
  (a)
  For any \(\xi_h\in D_h\), the unique Riesz representation \(\varrho_h(\xi_h) \in Y_h\) of the residual
  \(F - b(\xi_h; \pbullet) \in Y_h^*\) satisfies
  \[
    \Phi(\xi_h) = \frac12 \Vert \varrho_h(\xi_h) \Vert_Y^2.
  \]
  Given the stationary point \(x_h\in D_h\) to~\eqref{eq:dpg} and \(\xi_h \in X_h\),
  consider \(\Phi(x_h + t \xi_h)\) as a scalar function of the real parameter
  \(t\) with a derivative zero at \(t = 0\).
  For \(\vert t\vert\) small such that \(x_h(t) \coloneq x_h + t \xi_h\in D_h\) and
  \(y_h(t) \coloneq \varrho_h(x_h(t))\), it follows
  \[
    a(y_h(t), \pbullet) + b(x_h(t); \pbullet) = F
    \quad\text{in } Y_h^*.
  \]
  A differentiation with respect to \(t\) shows for \(\dot{y}_h \coloneq
  \partial y_h(0) / \partial t\) and \(\dot{x}_h \coloneq \partial x_h(0) /
  \partial t = \xi_h\) that \(\dot{y}_h\) exists and is the Riesz
  representation of \(-b'(x_h; \xi_h, \pbullet) = a(\dot{y}_h, \pbullet)\)
  in \(Y_h\).
  Therefore, \(\Phi(x_h(t)) = a(y_h(t), y_h(t))/2\) is differentiable and
  the derivative vanishes at \(t = 0\), which leads to
  \[
    0 = a(\dot{y}_h, y_h)
    \quad\text{for } y_h \coloneq y_h(0).
  \]
  It follows that
  \[
    b'(x_h; \xi_h, y_h) = 0
    \quad\text{for all } \xi_h \in X_h.
  \]
  Since \(y_h = y_h(0) = \varrho_h(x_h)\), \((x_h, y_h)\)
  solves~\eqref{eq:mixed}.

  (b) Conversely, if \((x_h, y_h)\) solves~\eqref{eq:mixed} then,
  for any \(\xi_h \in D_h\) and the above notation for the Riesz
  representation \(y_h(t)\) of \(F - B x_h(t)\) in \(Y_h\),
  \[
    \Vert F - B x_h(t) \Vert_{Y_h^*}^2
    =
    a(y_h(t), y_h(t))
    =
    F(y_h(t)) - b(x_h(t); y_h(t))
  \]
  has a derivative with respect to \(t\) at \(t = 0\), namely, for \(y_h
  \coloneq y_h(0)\)
  \[
    2\,a(\dot{y}_h, y_h)
    =
    F(\dot{y}_h) - b'(x_h; \xi_h, y_h) - b(x_h; \dot{y}_h).
  \]
  Since \(b'(x_h; \xi_h, y_h) = 0\) and
  \(F(\dot{y}_h) - b(x_h; \dot{y}_h) = a(y_h, \dot{y}_h)\), this implies
  \(a(\dot{y}_h, y_h) = 0\).
  Recall \(\partial \Phi(x(t))/\partial t\vert_{t = 0} = a(\dot{y}_h, y_h)
  = 0\), and so \(x_h\) is a stationary point of \(\Phi\).
  \qed
\end{proof}

\begin{proposition}[necessary and sufficient second-order condition]
  \label{prop:higher_order}
  Assume that \(\Phi\) is twice differentiable.
  (a)\; If \(x_h\) solves~\eqref{eq:dpg}, then
  \begin{equation}
    \label{eq:bounded_second_derivative}
    b''(x_h; \xi_h, \xi_h, y_h)
    \leq
    \Vert b'(x_h; \xi_h, \pbullet) \Vert_{Y_h^*}^2 \quad\text{ for all }\xi_h \in X_h.
  \end{equation}
  (b)\; If, in addition,
  \begin{equation}
    \label{eq:strictly_bounded_second_derivative}
    b''(x_h; \xi_h, \xi_h, y_h)
    <
    \Vert b'(x_h; \xi_h, \pbullet) \Vert_{Y_h^*}^2 \quad\text{ for all }\xi_h \in X_h \setminus \{0\},
  \end{equation}
  then \(x_h\) is locally unique.
\end{proposition}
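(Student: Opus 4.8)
The plan is to compute the second derivative of $\Phi$ along the ray $t\mapsto x_h+t\xi_h$ and to read off both assertions from the resulting expression. Fix $\xi_h\in X_h$ and, exactly as in the proof of Theorem~\ref{thm:mixed_formulation}, set $x_h(t):=x_h+t\xi_h\in D_h$ for $|t|$ small and let $y_h(t)\in Y_h$ be the Riesz representation of the residual $F-b(x_h(t);\pbullet)$, so that $\Phi(x_h(t))=\tfrac12\,a(y_h(t),y_h(t))$. Since $B\in C^1$, the right-hand side $t\mapsto F(\eta_h)-b(x_h(t);\eta_h)$ of the linear system in the finite-dimensional space $Y_h$ that defines $y_h(t)$ is differentiable, so $t\mapsto y_h(t)$ is differentiable with $\dot y_h(t)\in Y_h$ the Riesz representation of $-b'(x_h(t);\xi_h,\pbullet)$ (this is already in the proof of Theorem~\ref{thm:mixed_formulation}); the assumed twice-differentiability of $\Phi$ upgrades this to a second derivative $\ddot y_h(t)\in Y_h$ characterized by $a(\ddot y_h(t),\eta_h)=-b''(x_h(t);\xi_h,\xi_h,\eta_h)$ for all $\eta_h\in Y_h$.

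Differentiating $\Phi(x_h(t))=\tfrac12 a(y_h(t),y_h(t))$ twice and evaluating at $t=0$ gives, by the product rule,
\[
  \frac{\mathrm{d}^2}{\mathrm{d}t^2}\Big|_{t=0}\Phi(x_h(t))
  = a(\ddot y_h,y_h)+a(\dot y_h,\dot y_h)
  = \Vert b'(x_h;\xi_h,\pbullet)\Vert_{Y_h^*}^2 - b''(x_h;\xi_h,\xi_h,y_h),
\]
where I have used $a(\ddot y_h,y_h)=-b''(x_h;\xi_h,\xi_h,y_h)$ and $\Vert\dot y_h\Vert_Y=\Vert b'(x_h;\xi_h,\pbullet)\Vert_{Y_h^*}$ (the identity $\Vert F-B\xi_h\Vert_{Y_h^*}=\Vert\varrho_h(\xi_h)\Vert_Y$ from the proof of Theorem~\ref{thm:mixed_formulation}). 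For part~(a): if $x_h$ solves~\eqref{eq:dpg} then the scalar function $t\mapsto\Phi(x_h(t))$ has a local minimum at $t=0$, so its second derivative there is nonnegative, which is precisely~\eqref{eq:bounded_second_derivative}.

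For part~(b): the displayed identity exhibits the bounded bilinear form $\Phi''(x_h)$ on $X_h$, and \eqref{eq:strictly_bounded_second_derivative} says $\Phi''(x_h)(\xi_h,\xi_h)>0$ on the sphere $\sphere{X_h}$. Since $\dim X_h<\infty$, that sphere is compact, so there is $\alpha>0$ with $\Phi''(x_h)(\xi_h,\xi_h)\ge\alpha\Vert\xi_h\Vert_X^2$ for all $\xi_h\in X_h$. A second-order Taylor expansion of $\Phi$ at the stationary point $x_h$ (recall $x_h$ solves~\eqref{eq:dpg}, hence is stationary, so $\Phi'(x_h)=0$ by Theorem~\ref{thm:mixed_formulation}) then gives $\Phi(x_h+\xi_h)\ge\Phi(x_h)+\tfrac\alpha2\Vert\xi_h\Vert_X^2+o(\Vert\xi_h\Vert_X^2)>\Phi(x_h)$ for all small $\xi_h\neq0$; thus $x_h$ is a strict local minimizer of $\Phi$ and therefore the only solution of~\eqref{eq:dpg} in a neighbourhood of $x_h$.

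\textbf{Main obstacle.} The one genuinely delicate point is the regularity bookkeeping in the first paragraph: that the twice-differentiability hypothesis on $\Phi$ really does furnish the second-order ray derivative $\partial_t b'(x_h+t\xi_h;\xi_h,\eta_h)\vert_{t=0}$ that we name $b''(x_h;\xi_h,\xi_h,\eta_h)$, and hence $\ddot y_h$. Making this precise in the abstract setting (or, in concrete applications, verifying it directly for a $C^2$ nonlinearity $B$) is essentially all that lies beyond the chain rule and the standard ``positive-definite Hessian at a stationary point $\Rightarrow$ strict local minimum'' argument, the latter being clean here only because $X_h$ is finite-dimensional.
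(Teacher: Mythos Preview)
Your proposal is correct and follows essentially the same approach as the paper: both compute the second derivative of $\Phi$ along the ray $t\mapsto x_h+t\xi_h$ via the Riesz representations $\dot y_h$ and $\ddot y_h$ to arrive at the identity $\Phi''(x_h)(\xi_h,\xi_h)=\Vert b'(x_h;\xi_h,\pbullet)\Vert_{Y_h^*}^2-b''(x_h;\xi_h,\xi_h,y_h)$, and then invoke second-order optimality. The paper simply says ``the assertion follows from this and standard arguments in the calculus of stationary and minimal points,'' whereas you spell out the compactness-of-the-unit-sphere argument for part~(b); your identification of the regularity bookkeeping for $\ddot y_h$ as the only delicate point is also apt.
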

\begin{proof}
  The second derivative of
  \(\Phi(x_h(t))\) reads
  \(a(\partial^2y_h/\partial t^2, y_h)
    + \Vert \partial y_h/\partial t \Vert_Y^2\).
  Recall from the proof of Theorem~\ref{thm:mixed_formulation} for
  \(t=0\), that the Riesz representation \(\dot{y}_h = \partial
  y_h(0)/\partial t\) satisfies
  \[
    a(\dot{y}_h, \pbullet) = -b'(x_h; \xi_h, \pbullet)
    \quad\text{in } Y_h
    \quad\text{and}\quad
    \Vert \dot{y}_h \Vert_Y = \Vert b'(x_h; \xi_h, \pbullet)
    \Vert_{Y_h^*}.
  \]
  Another differentiation with respect to \(t\) shows that \(\ddot{y}_h
  \coloneq \partial^2 y_h(0)/\partial t^2\) satisfies
  \[
    a(\ddot{y}_h, \pbullet) = -b''(x_h; \xi_h, \xi_h, \pbullet)
    \quad\text{in } Y_h.
  \]
  Consequently, the second derivative of \(\Phi(x_h(t))\) at \(t=0\) is
  \begin{equation}
    \label{eq:second_derivative_phi}
    -b''(x_h; \xi_h, \xi_h, y_h) + \Vert b'(x_h; \xi_h, \pbullet)
    \Vert_{Y_h^*}^2.
  \end{equation}
  The assertion follows from this and standard arguments in the calculus of stationary and minimal points.
%   (a)\;
%   Equation~\eqref{eq:second_derivative_phi} implies that the necessary
%   second order condition for the existence of a minimizer of \(t \mapsto
%   \Phi(x_h(t))\) at \(t = 0\) reads
%   \[
%     0
%     \leq
%     \frac{\partial^2}{\partial t^2} \Phi(x_h(t))\vert_{t = 0}
%     =
%     -b''(x_h; \xi_h, \xi_h, y_h) + \Vert b'(x_h; \xi_h, \pbullet)
%     \Vert_{Y_h^*}^2.
%   \]
%   This implies~\eqref{eq:bounded_second_derivative}.
%
%   (b)\;
%   A sufficient second order optimality condition for
%   the existence of a strict minimizer of \(t \mapsto \Phi(x_h(t))\) at
%   \(t = 0\) reads
%   \[
%     0
%     <
%     \frac{\partial^2}{\partial t^2} \Phi(x_h(t))\vert_{t = 0}.
%   \]
%   The combination of this with Equation~\eqref{eq:second_derivative_phi}
%   implies existence and local uniqueness of a solution \(x_h\) to
%   \eqref{eq:dpg}.
  \qed
\end{proof}

\begin{remark}[linear problems]
  For a linear problem, \(b''(x_h; \pbullet)\) vanishes
  and~\eqref{eq:strictly_bounded_second_derivative} holds.
  This implies local uniqueness in the linear situation (which is a global
  one).

  The uniqueness of the discrete solution is observed in
  numerical examples; cf. Theorem~\ref{thm:a_post_unique} for a sufficient condition in the model example below.
\end{remark}

\subsection{Abstract a~priori error analysis}
This section presents a best-approxima\-tion result based on a discrete inf-sup condition and the existence of a Fortin operator.

\begin{hypothesis}\label{hyp:fortin}
  Throughout this paper, assume that there exists a linear bounded
  projection \(\Pi_h: Y \to Y_h\) with
  \(\Pi_h\vert_{Y_h} = \id\vert_{Y_h}\) and
  \begin{equation}
    \label{eq:Fortin}
    b'(D_h; X_h, (1-\Pi_h)Y) = 0,
  \end{equation}
  i.e., for all \(x_h \in D_h\) and all \(y \in Y\), \(\Pi_h y \in Y_h\)
  satisfies \(b'(x_h; \xi_h, y - \Pi_h y) =0\) for all \(\xi_h \in X_h\).
  Let \(\Vert \Pi_h \Vert\) denote the bound of \(\Pi_h\) in \(L(Y;Y)\).
\end{hypothesis}

The following theorem generalizes \cite[Prop.~5.4.2]{MR3097958} to the nonlinear
problem at hand. A sufficiently fine initial triangulation guarantees that \(B(x, \varepsilon)\cap X_h\subset D_h\) is nonempty.
\begin{theorem}[discrete inf-sup condition]
  \label{thm:discrete_inf_sup}
  Given a regular solution \(x\) to \(B(x) =F\), there exists
  an open ball \(B(x, \varepsilon)\coloneq \{\tilde x\in X\,|\, \Vert x-\tilde x\Vert_X<\varepsilon\}\) of radius \(\varepsilon > 0\) around
  \(x\) such that, for all \(\tilde x_h \in B(x, \varepsilon)\cap X_h\subset D_h\), the
  following discrete inf-sup condition holds
  \[
    0 < \frac{\beta(x;X_h,Y_h)}{2\Vert \Pi_h\Vert}
    \leq \beta(\tilde x_h; X_h, Y_h)
    \coloneq
    \inf_{\xi_h \in \sphere{X_h}} \sup_{\eta_h \in \sphere{Y_h}}
    b'(\tilde x_h; \xi_h, \eta_h).
  \]
\end{theorem}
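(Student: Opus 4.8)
The plan is to deduce the discrete inf-sup condition from two ingredients: first, a ``discrete'' inf-sup condition at the regular solution $x$ itself, namely $\beta(x;X_h,Y_h) \le \beta(\tilde x_h;X_h,Y_h)$ would be false in general, but the continuous inf-sup constant $\beta(x)$ transfers to $X_h \times Y_h$ via the Fortin operator $\Pi_h$ from Hypothesis~\ref{hyp:fortin}, giving a lower bound $\beta(x;X_h,Y_h) \ge \beta(x)/\Vert\Pi_h\Vert$; second, a perturbation estimate showing that $b'(\tilde x_h;\cdot,\cdot)$ is close to $b'(x;\cdot,\cdot)$ when $\tilde x_h$ is close to $x$, using that $B \in C^1$ so that $\Grad B$ is continuous as a map $D \to L(X;Y^*)$. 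Combining the two with a triangle inequality in the operator norm and absorbing the perturbation into a factor $1/2$ yields the claimed bound $\beta(\tilde x_h;X_h,Y_h) \ge \beta(x;X_h,Y_h)/(2\Vert\Pi_h\Vert)$.

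First I would establish the Fortin argument at $x$: for any $\xi_h \in \sphere{X_h}$, pick $\eta \in \sphere{Y}$ with $b'(x;\xi_h,\eta) \ge \beta(x) - \epsilon$ (using \eqref{eq:continuous_inf_sup}), and set $\eta_h \coloneq \Pi_h \eta \in Y_h$. Here is the subtle point: the Fortin property \eqref{eq:Fortin} is stated for $b'(D_h;\cdot,\cdot)$, i.e. with the linearization taken at discrete points $x_h \in D_h$, not at the continuous solution $x$. So strictly one cannot write $b'(x;\xi_h,\eta-\Pi_h\eta)=0$. The remedy is to run the Fortin argument directly at $\tilde x_h \in D_h$: for $\xi_h \in \sphere{X_h}$ choose $\eta \in \sphere{Y}$ nearly attaining $\sup_{\eta\in\sphere Y} b'(\tilde x_h;\xi_h,\eta)$, and then $b'(\tilde x_h;\xi_h,\Pi_h\eta) = b'(\tilde x_h;\xi_h,\eta) - b'(\tilde x_h;\xi_h,(1-\Pi_h)\eta) = b'(\tilde x_h;\xi_h,\eta)$ by \eqref{eq:Fortin}, while $\Vert\Pi_h\eta\Vert_Y \le \Vert\Pi_h\Vert$. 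This shows
\[
  \beta(\tilde x_h;X_h,Y_h) \ge \frac{1}{\Vert\Pi_h\Vert}\,
  \inf_{\xi_h\in\sphere{X_h}}\ \sup_{\eta\in\sphere Y} b'(\tilde x_h;\xi_h,\eta).
\]

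Next I would compare $\sup_{\eta\in\sphere Y} b'(\tilde x_h;\xi_h,\eta) = \Vert\Grad B(\tilde x_h)\xi_h\Vert_{Y^*}$ with $\Vert\Grad B(x)\xi_h\Vert_{Y^*}$. By continuity of $\Grad B: D \to L(X;Y^*)$ at $x$, there is $\varepsilon>0$ so that $\Vert\Grad B(\tilde x_h)-\Grad B(x)\Vert_{L(X;Y^*)} \le \beta(x)/2$ whenever $\Vert\tilde x_h - x\Vert_X < \varepsilon$; shrinking $\varepsilon$ if necessary also ensures $B(x,\varepsilon)\cap X_h\subset D_h$ as noted before the theorem. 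Then for $\xi_h\in\sphere{X_h}$,
\[
  \Vert\Grad B(\tilde x_h)\xi_h\Vert_{Y^*}
  \ge \Vert\Grad B(x)\xi_h\Vert_{Y^*} - \tfrac{\beta(x)}{2}
  \ge \beta(x) - \tfrac{\beta(x)}{2} = \tfrac{\beta(x)}{2},
\]
using $\Vert\Grad B(x)\xi_h\Vert_{Y^*} = \sup_{\eta\in\sphere Y} b'(x;\xi_h,\eta) \ge \beta(x)$ from \eqref{eq:continuous_inf_sup}. Combining with the Fortin bound gives $\beta(\tilde x_h;X_h,Y_h) \ge \beta(x)/(2\Vert\Pi_h\Vert)$. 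Finally, since restriction can only increase the inf-sup constant relative to the continuous one — more precisely, the same Fortin argument applied at $x$-linearization is not available, but one does have $\beta(x;X_h,Y_h) \ge \beta(x)/\Vert\Pi_h\Vert$ is \emph{not} what we need; rather we simply observe $\beta(x;X_h,Y_h) \le \beta(x)$ trivially (infimum/supremum over smaller sets), hence $\beta(x)/(2\Vert\Pi_h\Vert) \ge \beta(x;X_h,Y_h)/(2\Vert\Pi_h\Vert)$, which delivers the stated chain of inequalities.

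The main obstacle I anticipate is the bookkeeping around \emph{where} the Fortin property is invoked: \eqref{eq:Fortin} holds only for linearizations at discrete points $x_h\in D_h$, so the argument must be organised to apply $\Pi_h$ to test functions paired against $b'(\tilde x_h;\cdot,\cdot)$ with $\tilde x_h\in D_h$, never against $b'(x;\cdot,\cdot)$. A secondary technical point is quantifying the continuity modulus of $\Grad B$ only through the abstract $C^1$ hypothesis; one must be content with the qualitative ``choose $\varepsilon$ small enough'' rather than an explicit rate, which is exactly what the statement asks for. Everything else — the two triangle inequalities and the $\epsilon\to 0$ passage in the suprema — is routine.
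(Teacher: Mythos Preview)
Your core argument is correct and is essentially the paper's proof: apply the Fortin property \eqref{eq:Fortin} at the discrete point $\tilde x_h\in D_h$ to pass from $\sup_{\eta\in\sphere Y}$ to $\sup_{\eta_h\in\sphere{Y_h}}$ at the cost of $\Vert\Pi_h\Vert$, and use continuity of $\Grad B$ at $x$ to bound $\beta(\tilde x_h)\ge\beta(x)/2$ for $\tilde x_h$ in a small ball. The paper does these two steps in the opposite order (perturb first, then Fortin), but the content is identical; you also correctly flag that Fortin must be invoked at $\tilde x_h$, not at $x$.

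Your final step, however, contains an error. The claim ``$\beta(x;X_h,Y_h)\le\beta(x)$ trivially (infimum/supremum over smaller sets)'' is false: restricting the trial space to $X_h\subset X$ makes the infimum \emph{larger}, while restricting the test space to $Y_h\subset Y$ makes the supremum smaller, and these effects compete. In fact the paper's own proof concludes with $\beta(x)/(2\Vert\Pi_h\Vert)\le\beta(\tilde x_h;X_h,Y_h)$ --- exactly what you derived --- so the appearance of $\beta(x;X_h,Y_h)$ in the displayed inequality of the theorem is evidently a typo for $\beta(x)$. Drop your last paragraph and your proof matches the paper's.
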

\begin{proof}
  The continuous inf-sup condition~\eqref{eq:continuous_inf_sup} and the continuity of \(\Grad B\) in \(D\) lead to some \(\varepsilon\) such that
  \begin{align}
    B(x,\varepsilon)&\subset D,\label{eq:cond_eps_d}\\
    \beta(x) / 2 &\leq \inf_{\xi\in B(x,\varepsilon)} \beta(\xi).\label{eq:cond_eps_beta}
  \end{align}
  Then \(\tilde x_h \in B(x, \varepsilon)\cap X_h\) and~\eqref{eq:Fortin} imply
  \begin{align*}
    \beta(x) / 2
    &\leq
    \beta(\tilde x_h)
    \leq
    \inf_{\xi_h \in \sphere{X_h}} \sup_{\eta \in \sphere{Y}}
    b'(\tilde x_h; \xi_h, \eta)\\
    &=
    \inf_{\xi_h \in \sphere{X_h}} \sup_{\eta \in \sphere{Y}}
    b'(\tilde x_h; \xi_h, \Pi_h\eta)\\
    &=
    \inf_{\xi_h \in \sphere{X_h}} \sup_{\eta \in \sphere{Y}}
    \Vert \Pi_h \eta \Vert_Y b'\big( \tilde x_h; \xi_h, \Pi_h\eta \big/
    \Vert \Pi_h\eta \Vert_Y \big)\\
    &\leq
    \Vert \Pi_h \Vert
    \inf_{\xi_h \in \sphere{X_h}} \sup_{\eta \in \sphere{Y}}
    b'\big( \tilde x_h; \xi_h, \Pi_h\eta \big/
    \Vert \Pi_h\eta \Vert_Y \big)\\
    &=
    \Vert \Pi_h \Vert
    \inf_{\xi_h \in \sphere{X_h}} \sup_{\eta_h \in \sphere{Y_h}}
    b'(\tilde x_h; \xi_h, \eta_h).
  \end{align*}
  Hence, any \(\tilde x_h \in B(x, \varepsilon)\) satisfies \(0 < \frac{\beta(x)}{2\Vert \Pi_h \Vert}
    \leq
    \beta(\tilde x_h; X_h, Y_h)\).
  \qed
\end{proof}

\begin{remark}[converse of Theorem~\ref{thm:discrete_inf_sup}]
  Given the discrete inf-sup condition
  \begin{equation}
  \label{eq:disc_inf_sup}
    0 < \inf_{\xi_h \in \sphere{X_h}} \sup_{\eta_h \in \sphere{Y_h}}
    b'(\tilde x_h; \xi_h, \eta_h)
  \end{equation}
  \changed{at some point \(\widetilde x_h \in D_h\)}, the techniques of
  \cite[Lemma~10]{MR3576569} guarantee the existence of a
  linear bounded projection \(\Pi_h(\widetilde x_h):Y\to Y_h\)
  with~\eqref{eq:Fortin}, which depends on \(\widetilde x_h\).
  The above proof shows that \changed{the existence of \(\Pi_h(\widetilde
  x_h)\)} is also sufficient for \eqref{eq:disc_inf_sup}.
  The class of model examples allows for the simple more uniform
  Hypothesis~\ref{hyp:fortin} with \(\Pi_h(\widetilde x_h)=\Pi_h\)
  independent of \(\widetilde x_h\in D_h\).
\end{remark}

\begin{theorem}[local best-approximation]
  \label{thm:ba}
  Given a regular solution \(x\) to \(B(x)=F\), there exist positive constants \(\varepsilon > 0\) and \(C(x,\varepsilon)>0\) such
  that any solution \((x_h, y_h)\) to~\eqref{eq:mixed} with
  \(\Vert x - x_h \Vert_X < \varepsilon\) satisfies
  \[
    \Vert x-x_h \Vert_X + \Vert y_h\Vert_Y
    \leq C(x,\varepsilon) \inf_{\xi_h \in X_h} \Vert x-\xi_h \Vert_X.
  \]
\end{theorem}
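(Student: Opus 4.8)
The plan is to combine the discrete inf-sup condition of Theorem~\ref{thm:discrete_inf_sup} with the two equations of the mixed formulation~\eqref{eq:mixed}, controlling the nonlinearity by a first-order Taylor expansion of \(B\) around the discrete solution \(x_h\) and the continuity of \(\Grad B\) at \(x\). Abbreviate \(e_h \coloneq x - x_h\) and, for an arbitrary competitor \(\xi_h \in X_h\), \(e_h' \coloneq x - \xi_h\); then \(x_h - \xi_h = e_h' - e_h\) lies in \(X_h\). First I would fix \(\varepsilon > 0\) small enough that \(B(x,\varepsilon) \subset D\), that \(\beta_0 \coloneq \beta(x; X_h, Y_h)/(2\Vert \Pi_h\Vert) > 0\) is a lower bound for \(\beta(\widetilde x_h; X_h, Y_h)\) for every \(\widetilde x_h \in B(x,\varepsilon)\cap X_h\) by Theorem~\ref{thm:discrete_inf_sup}, that \(\Vert \Grad B(\widetilde x)\Vert_{L(X;Y^*)} \le M\) on \(B(x,\varepsilon)\), and that \(\Vert \Grad B(\widetilde x) - \Grad B(\widetilde x')\Vert_{L(X;Y^*)} < \delta\) for all \(\widetilde x, \widetilde x' \in B(x,\varepsilon)\), where \(\delta > 0\) is a small parameter to be fixed later in terms of \(\beta_0\) and \(M\); the last two properties follow from the continuity of \(\Grad B\) at \(x\) and convexity of the ball. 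Since \(\Vert x - x_h\Vert_X < \varepsilon\) is assumed, \(x_h\) itself witnesses \(\inf_{\xi_h \in X_h} \Vert x - \xi_h\Vert_X < \varepsilon\), so it suffices to prove the estimate for each \(\xi_h \in X_h\) with \(\Vert x - \xi_h\Vert_X < \varepsilon\) and then pass to the infimum.

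Next I would bound \(\Vert y_h\Vert_Y\). Testing the first equation of~\eqref{eq:mixed} with \(\eta_h = y_h\) gives \(\Vert y_h\Vert_Y^2 = F(y_h) - b(x_h; y_h) = \langle B(x) - B(x_h), y_h\rangle\). Writing \(B(x) - B(x_h) = \int_0^1 \Grad B(x_h + te_h)(e_h)\,\mathrm{d}t\) and subtracting the value of the integrand at \(t = 0\) shows \(\langle B(x) - B(x_h), y_h\rangle = b'(x_h; e_h, y_h) + r_h\) with \(|r_h| \le \delta \Vert e_h\Vert_X \Vert y_h\Vert_Y\) (the segment \([x_h, x]\) lies in \(B(x,\varepsilon)\)). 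The decisive observation is that \(b'(x_h; e_h, y_h) = b'(x_h; e_h', y_h) - b'(x_h; x_h - \xi_h, y_h) = b'(x_h; e_h', y_h)\) because the second equation of~\eqref{eq:mixed} annihilates the \(X_h\)-part \(x_h - \xi_h\); hence \(|b'(x_h; e_h, y_h)| \le M \Vert e_h'\Vert_X \Vert y_h\Vert_Y\), and dividing by \(\Vert y_h\Vert_Y\) yields \(\Vert y_h\Vert_Y \le M \Vert e_h'\Vert_X + \delta \Vert e_h\Vert_X\).

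Then I would bound \(\Vert x_h - \xi_h\Vert_X\) via the discrete inf-sup condition at \(x_h\): there is \(\eta_h \in \sphere{Y_h}\) with \(\beta_0 \Vert x_h - \xi_h\Vert_X \le b'(x_h; x_h - \xi_h, \eta_h) = b'(x_h; e_h', \eta_h) - b'(x_h; e_h, \eta_h)\). For the last term, \(b(x;\eta_h) - b(x_h;\eta_h) = \langle B(x) - B(x_h), \eta_h\rangle = \langle F - B(x_h), \eta_h\rangle = a(y_h, \eta_h)\) by the first equation of~\eqref{eq:mixed}, and the same Taylor estimate gives \(b'(x_h; e_h, \eta_h) = a(y_h, \eta_h) + r_h'\) with \(|r_h'| \le \delta \Vert e_h\Vert_X\). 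Thus \(\beta_0 \Vert x_h - \xi_h\Vert_X \le M \Vert e_h'\Vert_X + \Vert y_h\Vert_Y + \delta \Vert e_h\Vert_X\). Inserting \(\Vert e_h\Vert_X \le \Vert e_h'\Vert_X + \Vert x_h - \xi_h\Vert_X\) together with the bound on \(\Vert y_h\Vert_Y\) from the previous step, and then choosing \(\delta\) small compared to \(\beta_0\), lets me absorb all \(\Vert x_h - \xi_h\Vert_X\)-terms to the left, so \(\Vert x_h - \xi_h\Vert_X \lesssim \Vert e_h'\Vert_X\); consequently \(\Vert x - x_h\Vert_X \le \Vert e_h'\Vert_X + \Vert x_h - \xi_h\Vert_X \lesssim \Vert e_h'\Vert_X\) and, reinserting, \(\Vert y_h\Vert_Y \lesssim \Vert e_h'\Vert_X\), with constants depending only on \(\beta_0\) and \(M\), i.e.\ on \(x\) and \(\varepsilon\). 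Taking the infimum over \(\xi_h \in X_h\) gives the claim with \(C(x,\varepsilon)\) the resulting constant.

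I expect the main obstacle to be the apparent circularity between \(\Vert x - x_h\Vert_X\) and \(\Vert y_h\Vert_Y\): the inf-sup estimate bounds \(\Vert x_h - \xi_h\Vert_X\) only in terms of \(\Vert y_h\Vert_Y\) and the approximation error, whereas a naive estimate of \(\Vert y_h\Vert_Y\) would cost a full \(\Vert x - x_h\Vert_X\). The resolution is to use the second mixed equation \(b'(x_h; x_h - \xi_h, y_h) = 0\) precisely inside the estimate for \(\Vert y_h\Vert_Y\), which trades \(e_h\) for the best-approximation error \(e_h'\) and leaves only the higher-order term \(\delta\Vert e_h\Vert_X\) that is absorbed once \(\varepsilon\) (hence \(\delta\)) is small. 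A minor technical point is that only the continuity of \(\Grad B\) at \(x\) — not any global bound — is available, so one must impose only finitely many smallness conditions on \(\varepsilon\) and keep every intermediate argument inside \(B(x,\varepsilon) \subset D\).
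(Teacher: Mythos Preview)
Your argument is correct. The ingredients are the same as in the paper's proof --- the discrete inf-sup of Theorem~\ref{thm:discrete_inf_sup}, a first-order Taylor expansion of \(B\) around \(x_h\) (the paper packages this as Lemma~\ref{lem:taylor_b}), and the two equations of~\eqref{eq:mixed} --- but you organise them differently. The paper invokes the Brezzi splitting lemma to obtain a \emph{combined} inf-sup constant \(\gamma\) for the full saddle-point form \((\xi_h,\eta_h)\mapsto b'(x_h;\tilde\xi_h,\eta_h)+b'(x_h;\xi_h,\tilde\eta_h)+a(\tilde\eta_h,\eta_h)\) and bounds \(\Vert \tilde x_h-x_h\Vert_X+\Vert y_h\Vert_Y\) in one stroke; the second mixed equation then kills the term \(b'(x_h;\xi_h,y_h)\) outright. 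You instead treat the two components sequentially: the second equation is used as a Galerkin-orthogonality to replace \(e_h\) by \(e_h'\) in the estimate for \(\Vert y_h\Vert_Y\), and only the inf-sup for \(b'\) alone (directly from Theorem~\ref{thm:discrete_inf_sup}) is needed for \(\Vert x_h-\xi_h\Vert_X\). This buys you a more elementary proof that does not cite Brezzi's splitting lemma, at the price of a small bootstrapping step (absorbing the \(\delta\Vert e_h\Vert_X\) terms). Either route yields the same final constant structure depending on \(\beta(x)\), \(\Vert\Grad B(x)\Vert\), and \(\Vert\Pi_h\Vert\).
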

The proof of the theorem requires the following lemma.
\begin{lemma}\label{lem:taylor_b}
 Any \(\varepsilon>0\) and \(x_h\in B(x,\varepsilon)\subset D\) satisfy
 \begin{align}
  \Vert b'(x_h; x - x_h, \pbullet) &- b(x; \pbullet)+ b(x_h; \pbullet) \Vert_{Y^*}\notag\\
  &\leq 2 \sup_{\xi\in B(x,\varepsilon)} \Vert \Grad B(x)- \Grad B(\xi)\Vert_{L(X;Y^*)} \Vert x - x_h \Vert_X\label{eq:taylor_b_1}\\
  \Vert B(x) - B(x_h)\Vert_{Y^*}&\leq \sup_{\xi\in B(x,\varepsilon)} \Vert \Grad B(\xi)\Vert_{L(X;Y^*)} \Vert x - x_h \Vert_X.\label{eq:taylor_b_2}
 \end{align}
\end{lemma}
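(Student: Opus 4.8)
The plan is to reduce both estimates to the fundamental theorem of calculus for the $C^1$ map $B\colon D\to Y^*$ along the segment between $x_h$ and $x$. First I would rewrite the left-hand sides through $B$ and $\Grad B$: since $b(x;\pbullet)=\langle B(x),\pbullet\rangle$, $b(x_h;\pbullet)=\langle B(x_h),\pbullet\rangle$ and $b'(x_h;x-x_h,\pbullet)=\langle\Grad B(x_h)(x-x_h),\pbullet\rangle$, the quantity on the left of \eqref{eq:taylor_b_1} equals $\Vert\Grad B(x_h)(x-x_h)-B(x)+B(x_h)\Vert_{Y^*}$ and the one in \eqref{eq:taylor_b_2} equals $\Vert B(x)-B(x_h)\Vert_{Y^*}$.

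The key observation is that the open ball $B(x,\varepsilon)$ is convex and, by hypothesis, contained in $D$, so the whole segment $\{x_h+t(x-x_h):t\in[0,1]\}$ lies in $D$; moreover $\Vert x-(x_h+t(x-x_h))\Vert_X=(1-t)\Vert x-x_h\Vert_X<\varepsilon$ shows every such point lies in $B(x,\varepsilon)$. Since $B\in C^1(D;Y^*)$, the curve $t\mapsto B(x_h+t(x-x_h))$ is continuously differentiable into the Banach space $Y^*$, and the mean-value formula yields
\[
  B(x)-B(x_h)=\int_0^1 \Grad B\big(x_h+t(x-x_h)\big)(x-x_h)\,\mathrm{d}t .
\]
Estimate \eqref{eq:taylor_b_2} follows immediately by taking the $Y^*$-norm under the integral sign, bounding each $\Vert\Grad B(x_h+t(x-x_h))\Vert_{L(X;Y^*)}$ by $\sup_{\xi\in B(x,\varepsilon)}\Vert\Grad B(\xi)\Vert_{L(X;Y^*)}$, and pulling out $\Vert x-x_h\Vert_X$.

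For \eqref{eq:taylor_b_1}, I would subtract the trivial identity $\Grad B(x_h)(x-x_h)=\int_0^1\Grad B(x_h)(x-x_h)\,\mathrm{d}t$ from the displayed formula to obtain
\[
  \Grad B(x_h)(x-x_h)-B(x)+B(x_h)
  =\int_0^1\big[\Grad B(x_h)-\Grad B\big(x_h+t(x-x_h)\big)\big](x-x_h)\,\mathrm{d}t .
\]
For each $t\in[0,1]$ both $x_h$ and $x_h+t(x-x_h)$ lie in $B(x,\varepsilon)$, so the triangle inequality through $\Grad B(x)$ bounds the integrand's operator norm by $2\sup_{\xi\in B(x,\varepsilon)}\Vert\Grad B(x)-\Grad B(\xi)\Vert_{L(X;Y^*)}$; integrating in $t$ and extracting $\Vert x-x_h\Vert_X$ gives the asserted bound, the factor $2$ being exactly this triangle-inequality step.

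The only point requiring a little care is the legitimacy of the integral representation of $B(x)-B(x_h)$, i.e.\ the mean-value (Bochner) formula for continuously Fréchet-differentiable maps between Banach spaces; this is classical, and the hypotheses it needs — continuity of $t\mapsto\Grad B(x_h+t(x-x_h))$ on $[0,1]$ and the segment staying inside $D$ — have already been secured above. I do not anticipate any genuine obstacle beyond this bookkeeping: the lemma is just a quantitative first-order Taylor estimate for $B$ in disguise.
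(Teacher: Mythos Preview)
Your proof is correct and follows essentially the same approach as the paper: the integral remainder formula for $B(x)-B(x_h)$, the observation that the whole segment stays in $B(x,\varepsilon)$, and the triangle inequality through $\Grad B(x)$ to produce the factor $2$. The only cosmetic difference is that the paper tests against an arbitrary $\eta\in\sphere{Y}$ and then takes the supremum, whereas you work directly with the Bochner integral in $Y^*$; these are equivalent.
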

\begin{proof}
 Given any \(\eta\in\sphere{Y}\), the Taylor's formula of \(b\) at \(x_h\) with remainder reads
  \begin{align*}
    b'(x_h; x - x_h, \eta) &- b(x; \eta)+ b(x_h; \eta) \\
    &= \int_0^1 \Big( b'(x_h;x-x_h,\eta) - b'(x_h+s(x-x_h);x-x_h,\eta) \Big) \ds.
  \end{align*}
  Since \(\Vert x - x_h\Vert_X <\varepsilon\) implies \(\Vert x - (x_h+s(x-x_h))\Vert_X <\varepsilon\) for \(0\leq s \leq 1\), the triangle inequality proves
  \begin{align*}
    b'(x_h; x - x_h, \eta) &+ b(x_h; \eta)
    - b(x; \eta)\\
    &\leq 2\sup_{\xi\in B(x,\varepsilon)} \vert b'( x;x-x_h,\eta) - b'(\xi;x-x_h,\eta) \vert\\
    &\leq 2 \sup_{\xi\in B(x,\varepsilon)} \Vert \Grad B(x)- \Grad B(\xi)\Vert_{L(X;Y^*)} \Vert x - x_h \Vert_X.
  \end{align*}
  Since \(\eta\in\sphere{Y}\) is arbitrary, this implies \eqref{eq:taylor_b_1}.
  The assertion \eqref{eq:taylor_b_2} follows from the same arguments without the term \(b'(x_h; x - x_h, \eta)\).\qed
\end{proof}

\begin{proof}[of \changed{Theorem~\ref{thm:ba}}]
%   \begin{equation}
%    \label{eq:cond_eps_cont_db}
% %    \tag{3}
%     \sup_{\xi\in B(x,\varepsilon)} \Vert \Grad B(x)- \Grad B(\xi)\Vert_{L(X;Y^*)} \leq \gamma/4
%   \end{equation}
%
  Let \(\tilde x_h\) be the best-approximation to \(x\) in \(X_h\), i.e.,
  \[
    \Vert x - \tilde x_h \Vert_X
    = \inf_{\xi_h \in D_h} \Vert x - \xi_h \Vert_X \leq \Vert x - x_h \Vert_X < \varepsilon.
  \]
  Suppose \(\varepsilon>0\) satisfies \eqref{eq:cond_eps_d}-\eqref{eq:cond_eps_beta} and, with the continuity of \(\Grad B\) at \(x\),
  \begin{equation}
   \label{eq:cond_eps_bound_db}
    \sup_{\xi\in B(x,\varepsilon)} \Vert \Grad B(\xi)\Vert_{L(X;Y^*)} \leq 2 \Vert \Grad B(x)\Vert_{L(X;Y^*)}.
  \end{equation}
  The discrete inf-sup condition from
  Theorem~\ref{thm:discrete_inf_sup} plus the Brezzi splitting lemma
  \cite[Thm.~4.3 in Ch.~III]{MR2322235} with inf-sup constants,
  \(\beta(x)/2\) and \(1\), and continuity constants, \(2 \Vert \Grad
  B(x)\Vert_{L(X;Y^*)}\) and \(1\), for the
  bilinear form \(b'(x_h; \pbullet, \pbullet)\) and scalar product
  \(a\) prove the global inf-sup condition \(0<\gamma\leq\beta(x_h;X_h,Y_h)\) for
  \[
    \gamma
    \coloneq
    \hspace{-2mm}\inf_{(\tilde\xi_h, \tilde\eta_h) \in \sphere{X_h \times Y_h}}
    \sup_{(\xi_h, \eta_h) \in \sphere{X_h \times Y_h}}
    \hspace{-1mm}\Big(b'(x_h; \tilde\xi_h, \eta_h)
    + b'(x_h; \xi_h , \tilde\eta_h)
    + a(\tilde\eta_h,\eta_h)\Big).
  \]
  independent of \(\varepsilon\) with \eqref{eq:cond_eps_d}--\eqref{eq:cond_eps_beta} and \eqref{eq:cond_eps_bound_db}.
  Given \(\gamma>0\) and \(\beta(x)>0\) suppose, for some smaller
  \(\varepsilon>0\) if necessary, that \(\varepsilon>0\) satisfies
  \eqref{eq:cond_eps_d}--\eqref{eq:cond_eps_beta},
  \eqref{eq:cond_eps_bound_db}, and, from the continuity of \(\Grad B\) at
  \(x\),
  \begin{equation}
   \label{eq:cond_eps_cont_db}
%    \tag{3}
    \sup_{\xi\in B(x,\varepsilon)} \Vert \Grad B(x)- \Grad B(\xi)\Vert_{L(X;Y^*)} \leq \min\{\gamma/4,\beta(x)/8\}.
  \end{equation}
  %\gamma \leq \beta
  For the best-approximation \(\tilde y_h=0\) to \(y=0\) in \(Y_h\) and
  \((\tilde\xi_h, \tilde\eta_h) = (\tilde x_h - x_h, \tilde y_h - y_h)\),
  this implies the existence of
  \((\xi_h, \eta_h) \in \sphere{X_h \times Y_h}\) with
  \[
    \gamma \big( \Vert \tilde x_h - x_h \Vert_X
    + \Vert y_h \Vert_Y \big)
    \leq
    b'(x_h; \tilde x_h - x_h, \eta_h)
    - b'(x_h; \xi_h, y_h)
    - a(y_h, \eta_h).
  \]
  Since \((x_h, y_h)\) solves~\eqref{eq:mixed} and \(\tilde y_h = 0\), this
  leads to
  \[
    \gamma \big( \Vert \tilde x_h - x_h \Vert_X
    + \Vert y_h \Vert_Y \big)
    \leq
    b'(x_h; \tilde x_h - x, \eta_h)
    + b'(x_h; x - x_h, \eta_h)
    + b(x_h; \eta_h) - b(x; \eta_h).
  \]
  Lemma~\ref{lem:taylor_b} and \eqref{eq:cond_eps_cont_db} imply
  \begin{align*}
    b'(x_h; x - x_h, \eta_h) &+ b(x_h; \eta_h)
    - b(x; \eta_h)\leq \gamma\Vert x - x_h \Vert_X/2.
  \end{align*}
  The combination of the preceding two displayed formulae reads
  \begin{align*}
   \frac\gamma2 \Vert \tilde x_h - x_h \Vert_X + \gamma  \Vert y_h \Vert_Y &\leq b'(x_h;\tilde x_h - x_h,\eta_h).
  \end{align*}
  With \eqref{eq:cond_eps_bound_db}, this is bounded from above by
  \begin{align*}
   \Vert \Grad B (x_h)\Vert_{L(X;Y^*)} \Vert x-\tilde x_h\Vert_X \leq 2 \Vert \Grad B (x_h)\Vert_{L(X;Y^*)} \Vert x-\tilde x_h\Vert_X.
  \end{align*}
%   The boundedness of \(b'(x_h;\pbullet)\) and the triangle inequality show
%   \begin{align*}
%     \gamma \big( \Vert \tilde x_h - x_h \Vert_X
%     + \Vert y_h \Vert_Y \big)
%     &\leq
%     (\Vert b'(x_h;\pbullet) \Vert +\frac\gamma2) \Vert x - \tilde x_h \Vert_X
%     + \frac\gamma2\Vert \tilde x_h - x_h \Vert_X.
%   \end{align*}
%   This implies
%   \[
%     \frac\gamma2\Vert \tilde x_h - x_h \Vert_X
%     + \gamma \Vert y_h \Vert_Y
%     \leq
%     (\Vert b'(x_h;\pbullet) \Vert +\frac\gamma2) \Vert x - \tilde x_h \Vert_X.
%   \]
  The triangle inequality concludes the proof.
  \qed
\end{proof}

\begin{remark}
  \changed{Under further smoothness conditions of the nonlinear mapping
    \(b'\) the local existence and uniqueness of a discrete solution, e.g.,
    follows from \cite[Thm.~2]{MR1310318}.}
\end{remark}

\begin{remark}
  \label{rem:kantorovich}
  The Newton-Kantorovich theorem \cite[Section~5.2]{MR816732} is
  \changed{another} tool
  for the proof of the existence of discrete solutions close to the regular
  solution. In the model problem of Section~\ref{sec:model_problem}, the
  higher Fr\'echet derivatives for this argument do \emph{not} exist, cf.
  Remark~\ref{rem:second_derivative} for details.
\end{remark}

\subsection{Abstract a~posteriori error analysis}
This subsection is devoted to a brief abstract a~posteriori error analysis
of the nonlinear dPG. Given a discrete approximation \(x_h\) close to the
regular solution \(x\) to \(B(x)=F\), the residual \(F-B(x_h)\in Y^*\) has
a norm \(\Vert F-B(x_h)\Vert_{Y^*}\) that, in principle, is accessible in
the sense that lower and upper bounds may be computable. The latter issue
is a typical general task in the a ~posteriori error analysis and will be
adressed in Section~\ref{sec:model_problem} for a model example.

\begin{theorem}[local a~posteriori analysis]
\label{thm:local_a_post}
  Let \(x\) be a regular solution to \(B(x)=F\) with inf-sup constant
  \(\beta(x)\) from \eqref{eq:continuous_inf_sup}. Then there exists some
  \(\varepsilon>0\) such that any \(x_h\in B(x,\varepsilon)\subset D\)
  satisfies
  \[
    \frac{\beta(x)}{4} \Vert x - x_h\Vert_X \leq \Vert F-B(x_h)\Vert_{Y^*}
    \leq 2 \Vert\Grad B(x)\Vert_{L(X;Y^*)} \Vert x - x_h\Vert_X.
  \]
\end{theorem}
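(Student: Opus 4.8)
The plan is to prove the two inequalities separately, both as essentially direct consequences of the Taylor estimates in Lemma~\ref{lem:taylor_b} together with a suitable choice of the radius \(\varepsilon\), made precise through the continuity of \(\Grad B\) at \(x\). Note that neither a discretization nor any minimality of \(x_h\) enters: the only hypothesis used is that \(x_h\) lies in a small ball around the regular solution.

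For the upper bound I would use that \(B(x)=F\) turns \(\Vert F - B(x_h)\Vert_{Y^*}\) into \(\Vert B(x)-B(x_h)\Vert_{Y^*}\), so that \eqref{eq:taylor_b_2} of Lemma~\ref{lem:taylor_b} gives
\[
  \Vert F - B(x_h)\Vert_{Y^*} \leq \sup_{\xi\in B(x,\varepsilon)} \Vert \Grad B(\xi)\Vert_{L(X;Y^*)}\, \Vert x - x_h\Vert_X ;
\]
choosing \(\varepsilon>0\) so small that \eqref{eq:cond_eps_bound_db} holds (continuity of \(\Grad B\) at \(x\)) bounds the supremum by \(2\Vert\Grad B(x)\Vert_{L(X;Y^*)}\) and yields the right-hand inequality.

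For the lower bound I would first shrink \(\varepsilon\) so that, in addition to \(B(x,\varepsilon)\subset D\), the inf-sup constant stays large on the whole ball, say \(\beta(\xi)\geq \tfrac34\beta(x)\) for all \(\xi\in B(x,\varepsilon)\) --- legitimate because \(\xi\mapsto\beta(\xi)=\inf_{\zeta\in\sphere{X}}\Vert\Grad B(\xi)\zeta\Vert_{Y^*}\) inherits the continuity of \(\Grad B\), exactly as in \eqref{eq:cond_eps_beta} --- and so that \(\sup_{\xi\in B(x,\varepsilon)}\Vert\Grad B(x)-\Grad B(\xi)\Vert_{L(X;Y^*)}\leq \beta(x)/8\), the analogue of \eqref{eq:cond_eps_cont_db}. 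For \(x_h\in B(x,\varepsilon)\), the inf-sup condition \eqref{eq:continuous_inf_sup} at \(x_h\) applied to the unit vector \((x-x_h)/\Vert x-x_h\Vert_X\in\sphere{X}\) furnishes some \(\eta\in\sphere{Y}\) with \(\tfrac12\beta(x)\Vert x-x_h\Vert_X\leq b'(x_h;x-x_h,\eta)\). Then I would split
\[
  b'(x_h;x-x_h,\eta) = \bigl(b'(x_h;x-x_h,\eta) - b(x;\eta) + b(x_h;\eta)\bigr) + \bigl(b(x;\eta) - b(x_h;\eta)\bigr),
\]
estimate the first parenthesis by \eqref{eq:taylor_b_1} of Lemma~\ref{lem:taylor_b} and the above bound on \(\sup\Vert\Grad B(x)-\Grad B(\xi)\Vert_{L(X;Y^*)}\) by \(\tfrac14\beta(x)\Vert x-x_h\Vert_X\), and rewrite the second parenthesis as \(\langle B(x)-B(x_h),\eta\rangle = \langle F - B(x_h),\eta\rangle\leq\Vert F - B(x_h)\Vert_{Y^*}\) since \(\Vert\eta\Vert_Y=1\). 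Absorbing the first term into the left-hand side gives \(\tfrac14\beta(x)\Vert x-x_h\Vert_X\leq\Vert F - B(x_h)\Vert_{Y^*}\).

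I do not expect a genuine obstacle here. The only point requiring care is the bookkeeping of \(\varepsilon\), namely making the linearization (Taylor) remainder of Lemma~\ref{lem:taylor_b} a controlled fraction of \(\beta(x)\Vert x-x_h\Vert_X\) so that it can be absorbed on the left, which is purely a matter of the continuity of \(\Grad B\) at \(x\); a single choice of \(\varepsilon\) serves both inequalities.
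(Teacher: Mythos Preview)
Your proposal is correct and follows essentially the same argument as the paper's proof: both use \eqref{eq:taylor_b_2} together with \eqref{eq:cond_eps_bound_db} for the upper bound, and for the lower bound both apply the inf-sup condition at \(x_h\) (via \eqref{eq:cond_eps_beta}), split off the Taylor remainder controlled by \eqref{eq:taylor_b_1} and \eqref{eq:cond_eps_cont_db}, and absorb it to the left. The only cosmetic difference is that you ask for \(\beta(\xi)\geq\tfrac34\beta(x)\) while the paper uses \(\tfrac12\beta(x)\); since you then only use the weaker bound \(\tfrac12\beta(x)\) anyway, this is harmless.
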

\begin{proof}
  With the choice of \(\varepsilon>0\) from the proof of
  Theorem~\ref{thm:ba} it follows
  \eqref{eq:cond_eps_d}-\eqref{eq:cond_eps_beta} and
  \eqref{eq:cond_eps_bound_db}-\eqref{eq:cond_eps_cont_db}.
  The \changed{continuous} inf-sup condition \eqref{eq:continuous_inf_sup} implies
  the existence of \(\eta\in \sphere{Y}\) with
  \begin{align*}
    \frac{\beta(x)}{2} \Vert x - x_h\Vert_X &\leq b'(x_h;x-x_h,\eta)\\
    &\leq b(x;\eta) - b(x_h;\eta) + \vert b'(x_h;x-x_h,\eta) - b(x;\eta) + b(x_h;\eta)\vert.
  \end{align*}
  Lemma~\ref{lem:taylor_b} for the last term, \(b(x;\eta)=F(\eta)\), and
  \eqref{eq:cond_eps_cont_db} show
  \[
    \frac{\beta(x)}{2} \Vert x - x_h\Vert_X \leq F(\eta) - b(x_h;\eta) +
    \frac{\beta(x)}{4} \Vert x - x_h\Vert_X.
  \]
  This proves the asserted reliability
  \[
    \frac{\beta(x)}{4} \Vert x - x_h\Vert_X \leq \Vert F(\eta) -
    b(x_h;\eta)\Vert_{Y^*}.
  \]
  To prove the efficiency, utilize \(F=B(x)\), Lemma~\ref{lem:taylor_b},
  and \eqref{eq:cond_eps_bound_db} to verify
  \begin{align*}
    \Vert F-B(x_h)\Vert_{Y^*} &= \Vert B(x) - B(x_h)\Vert_{Y^*}\\ &\leq
    2\Vert \Grad B(x)\Vert_{L(X;Y^*)} \Vert x-x_h\Vert_X. \quad\qed
  \end{align*}
\end{proof}

\begin{remark}
  Since \(y=0\) and \(y_h\) is computed, the a~posteriori error \(\Vert y -
  y_h\Vert_Y = \Vert y_h\Vert_Y\) is already an error estimator and can be
  added on both sides of the reliability (resp. efficiency) a~posteriori
  error estimate. This justifies the usage of the extended residual \(\Vert
  F-a(y_h,\pbullet)-b(x_h;\pbullet)\Vert_{Y^*} + \Vert y_h\Vert_Y\) of the
  system \eqref{eq:mixed}.
\end{remark}
\begin{remark}
  The constants \(\beta(x)/4\) (resp. \(2\Vert\Grad B(x)\Vert_{L(X;Y^*)}\))
  in Theorem~\ref{thm:local_a_post} follow from the choice of
  \(\varepsilon\) in the a~priori error analysis in the proof of
  Theorem~\ref{thm:ba}. For smaller and smaller values of \(\varepsilon\),
  those constants could be replaced by any number \(<\beta(x)\) (resp.
  \(>\Vert\Grad B(x)\Vert_{L(X;Y^*)}\)) in the following sense. For any
  \(0<\lambda<1\) there exists some \(\varepsilon>0\) such that any \(x_h\in
  B(x,\varepsilon)\) satisfies \(\lambda\beta(x)\leq \Vert
  F-B(x_h)\Vert_{Y^*}\leq (1+\lambda) \Vert\Grad B(x)\Vert_{L(X;Y^*)}\).
\end{remark}

\section{Model problem}
\label{sec:model_problem}
This section introduces a nonlinear model problem and a low-order dPG
discretization and establishes two further equivalent characterizations of
the nonlinear dPG method: reduced discretization and weighted
least-squares.
\subsection{Convex energy minimization}
\label{sec:convex_energy_minimization}
The nonlinear model problem involves a nonlinear function
\(\phi \in C^2(0, \infty)\) with \(0 < \gamma_1 \leq \phi(t) \leq \gamma_2\) and
\(0 < \gamma_1 \leq \phi(t) + t \phi'(t) \leq \gamma_2\) for all \(t \geq 0\) and
universal positive constants \(\gamma_1, \gamma_2\).
% \[
%   \varphi(t) \coloneq \int_0^t s \, \phi(s) \ds.
% \]
%
Given \(f \in L^2(\Omega)\) and the convex function \(\varphi\), \(\varphi(t) \coloneq \int_0^t s \, \phi(s) \ds\) for \(t\geq 0\), the model problem minimizes the
energy functional
\[
  E(v)
  \coloneq \int_\Omega \varphi(\vert \grad v(x)\vert) \dx - \int_\Omega fv \dx
  \quad\text{among all }
  v \in  \changed{H^1_0(\Omega)}.
\]
The convexity of \(\varphi\) and the above assumptions on \(\phi\) lead to growth-conditions and sequential weak
lower semicontinuity of \(E\) and guarantee the unique existence of a
minimizer \(u\) of \(E\) in \changed{\(H^1_0(\Omega)\)}
\cite[Thm.~25.D]{MR1033498}.
The equivalent Euler-Lagrange equation reads
\begin{equation}
  \label{eq:EulerLagrange}
  \int_\Omega \phi(\vert \grad u \vert) \grad u \cdot \grad v \dx
  =
  \int_\Omega fv \dx
  \quad\text{for all } v \in \changed{H^1_0(\Omega)}
\end{equation}
and has the unique solution \(u\) in \changed{\(H^1_0(\Omega)\)}.
The stress variable \(\sigma(A) \coloneq \phi(\vert A \vert) A\)
defines a function \(\sigma \in C^1(\R^n; \R^n)\) with Fr\'echet derivative
\begin{equation}\label{eq:dsigma}
  \Grad \sigma(A)
  =
  \phi(\vert A \vert) I_{n\times n}
  + \phi'(\vert A \vert) \vert A \vert \sign(A) \otimes \sign(A)
\end{equation}
with the sign function \(\sign(A) \coloneq A\,/\,\vert A \vert\) for
\(A \in \R^n \setminus \lbrace 0 \rbrace\) and the closed unit ball \(\sign(0) \coloneq
\overline{B(0, 1)}\) in \(\R^n\).
The prefactor \(\phi'(\vert A \vert) \vert A \vert\) makes \(\Grad\sigma\) a
continuous function in \(\R^n\).
In fact \(\Grad\sigma\in C^0(\R^{n\times n}_{\operatorname{sym}})\) is
bounded with eigenvalues in the compact interval
\([\gamma_1,\gamma_2]\subset (0,\infty)\).
\begin{remark}[\(\operatorname{Lip}(\sigma) \leq \gamma_2\)]
\label{rem:lipschitz}
 For \(A,B\in \R^n\), the argument
  \(
    \sigma(A) - \sigma(B)
    =
    \int_0^1 \Grad \sigma(sA+(1-s)B) (A-B) \ds
  \)
  and~\eqref{eq:dsigma} imply the global Lipschitz continuity of \(\sigma\),
  \[
    \vert \sigma(A) - \sigma(B) \vert
    \leq
    \int_0^1 \vert \Grad \sigma(sA+(1-s)B) (A-B) \vert \ds
    \leq
    \gamma_2 \vert A - B \vert.
  \]
\end{remark}
\begin{example}
  \label{ex:density}
  In the following examples, \(0 \leq \phi'' \leq 2\) is bounded as well as
  \(\phi'\) and \(\Grad\sigma\) from \eqref{eq:dsigma} is globally Lipschitz continuous.
  (a)\;
  \(\phi(t) \coloneq 2 + (1 + t)^{-2}\)
  with \(\gamma_1 = 1 < \gamma_2 = 3\) \cite{MR1364401} and \(\operatorname{Lip}(\Grad\sigma) \leq 4\) and
  (b)\;
  \(\phi(t) \coloneq 2 - (1 + t^2)^{-1}\)
  with \(\gamma_1 = 1 < \gamma_2 = 4\) and \(\operatorname{Lip}(\Grad\sigma) \leq 2\).
\end{example}

\begin{remark}[second derivative]
\label{rem:second_derivative}
 A formal calculation with \(s(j) \coloneq (\sign A)_j\),
\(s(j,k) \coloneq (\sign A)_j (\sign A)_k\) etc.\ and the Kronecker symbol
\(\delta_{jk}\) for \(j, k, \ell = 1, \dots, n\) leads at any \(A \in \R^n\) to
\[
  \Hess \sigma(A)_{j,k,\ell}
  =
  \phi'(\vert A \vert)(\delta_{jk} s(\ell) + \delta_{j\ell} s(k)
  + \delta_{k\ell}s(j))
  + (\phi''(\vert A \vert) \vert A \vert - \phi'(\vert A \vert)) s(j,k,\ell).
\]
Although \(\Hess \sigma(A)\) may be bounded (at least in the
Example~\ref{ex:density}.a and b), it may be discontinuous for
\(A \to 0\).
In Example~\ref{ex:density}.b, \(\phi'(0) = 0\) and
\(\Hess \sigma\) is continuous with \(\Hess \sigma(0) = 0\). The associated
trilinear form \(b''(x;\pbullet)\), however, is not well-defined on
\(X\times Y\times Y\) because the product of three Lebesgue functions in \(L^2(\Omega)\) is, in general, not in \(L^1(\Omega)\).
\end{remark}

\subsection{Breaking the test spaces}
\label{sec:breaking}
Let $\Dom \subseteq \R^n$ be a bounded Lipschitz domain with polyhedral
boundary $\partial\Dom$.
Let $\Tri$ denote a regular triangulation of the domain $\Dom$ into
$n$-simplices and let $\Edges$ (resp.\ $\Edges(T)$) denote the set of all
sides in the triangulation (resp.\ of an $n$-simplex $T\in\Tri$).

The unit normal vector $\nu_T$ along the boundary $\partial T$ of an
$n$-simplex $T\in\Tri$ (is constant along each side of \(T\) and) points
outwards.
For any side $E=\partial T_+\cap\partial T_-\in \Edges$ shared by
two simplices, the enumeration of the neighbouring simplices $T_\pm$ is
globally fixed and so defines a unique orientation of the unit normal
\(\nu_E = \nu_{T_+}\vert_E\).
%Given any function $v$, define the jump of $v$ across an inner side $E\in\Edges(\Om)$ by $\jumpE{v}\coloneq v|_{T_+} - v|_{T_-}\in L^2(E)$ and the jump across a boundary side $E\in\Edges(\partial\Dom)$ by $\jumpE{v}\coloneq  v$.
Let $h_T$ denote the diameter of $T\in\Tri$,
$h_{\textup{max}}\coloneq\max_{T\in\Tri} h_T\leq\operatorname{diam}(\Dom)$
and $h_\Tri|_K=h_K$ for any $K\in\Tri$.
The barycenter $\Mid(T)$ of $T\in \Tri$ defines the piecewise constant
function $\Mid(\Tri)\in P_0(\Tri;\R^n)$ by $\Mid(\Tri)|_K \coloneq \Mid(K)$
for any $K\in\Tri$ and $\Mid(E)$ is the barycenter of $E\in \Edges$.
The piecewise affine function \(\pbullet - \Mid(\Tri) \in P_1(\Tri;\R^n)\) equals \(x -
\Mid(T)\) at \(x \in T \in \Tri\).

Recall that \(H^k(\Tri)
\coloneq \prod_{T \in \Tri} H^k(T) \coloneq \{v \in L^2(\Omega) \,|\,
\forall T \in \Tri,\, v\vert_T \in H^k(T)\}\) denotes the piecewise Sobolev
space.
Define the discrete spaces
\begin{align*}
  P_k(T)&\coloneq \{  v_k\in L^\infty (T)\,\mid \;  v_k \text{ is polynomial on } T \text{ of degree }\leq k\},\\
  P_k(\Tri)&\coloneq \{  v_k\in L^\infty (\Dom)\,\mid \;  \forall T\in\Tri, \, v_k|_T\in P_k(T)\},\\
%   P_k(T;U)&\coloneq \{  q_k\in L^\infty (T;U)\,\mid \;   \text{each component of } q_k \text{ belongs to } P_k(T)\},\\
  P_k(\Tri;\R^n)&\equiv P_k(\Tri)^n,\\
 S^k_0(\Tri) &\coloneq P_k(\Tri)\cap H^1_0(\Dom),\\
 RT_k(\Tri) &\coloneq  \{ q_k \in H(\Div,\Dom)  \,|\, \exists A \in P_k(\Tri;\R^n),\exists b\in P_k(\Tri),\\
   &\quad\quad\quad\quad\quad\quad\quad\quad\quad\quad q_k=A + b(\pbullet - \Mid(\Tri))\},\\
 CR^1(\Tri)
 &\coloneq  \{ v_\CR \in  P_1(\Tri)\,|\, \forall E\in\Edges(\Omega),\,
 v_\CR\text{ continuous at} \Mid(E)\},\\
 CR^1_0(\Tri)
 &\coloneq  \{ v_\CR \in CR^1(\Tri) \,|\, \forall E\in\Edges(\partial\Dom),\, v_\CR(\Mid(E))=0\},\\
 P_k(\Edges)&\coloneq \{t_k \in L^2(\partial\Tri)\,|\, t_k|_E \in P_k(E) \text{ for any }E\in\Edges\}.
\end{align*}

\begin{definition}
 For a triangulation $\Tri$ with skeleton $\partial\Tri\coloneq \bigcup_{T\in\Tri} \bigcup_{E\in\Edges(T)} E$ and $T\in\Tri$, recall the local trace spaces $H^{1/2}(\partial T)$ and $H^{-1/2}(\partial T) = (H^{1/2}(\partial T))^\star$ and
\begin{align*}
  H^{-1/2}(\partial\Tri)
  &\coloneq
  \{ t = (t_T)_{T\in\Tri}\in  {\textstyle \prod_{T\in\Tri} }
    H^{-1/2}(\partial T) \,|\,\\
    &\hspace{5em}
    \exists q\in H(\Div,\Dom),\forall T \in \Tri,t_T=(q|_T)|_{\partial
    T}\cdot \nu_T\}
 \end{align*}
endowed with the minimal extension norm, for $t\in H^{-1/2}(\partial\Tri)$,
\begin{align*}
 \Vert t \Vert_{H^{-1/2}(\partial\Tri)}&\coloneq \min \{\NormHDivDom{q}\,|\, q\in H(\Div,\Dom),\forall T \in
\Tri, t_T=(q|_T)|_{\partial T}\cdot \nu_T\}.
\end{align*}
The duality brackets $\langle{\pbullet},{\pbullet}\rangle_{\partial T}$ in $ H^{-1/2}(\partial T) \times  H^{1/2}(\partial T)$ extend the $L^2$ scalar product in $L^2(\partial T)$ and lead to the duality bracket on the skeleton for any $t=(t_T)_{T\in\Tri}\in\prod_{T\in\Tri} H^{-1/2}(\partial T)$ and $s=(s_T)_{T\in\Tri}\in\prod_{T\in\Tri}H^{1/2}(\partial T)$ defined by
\begin{align*}
 \langle t, s\rangle_{\partial\Tri} \coloneq  \sum_{T\in\Tri} \langle{t_T},{s_T}\rangle_{\partial T}.
\end{align*}
\end{definition}

\begin{remark}[\(RT_0(\Tri)\equiv P_0(\Edges)\)]
\label{rem:identification_RT_P0Edges}
  The spaces $RT_0(\Tri)$ and $P_0(\Edges)$ are isomorphic \cite[Lemma
  3.2]{MR3279489} in the sense that any $q_\RT\in RT_0(\Tri)$ and
  $E\in\Edges$ with fixed unit normal vector $\nu_E$ satisfies
  $q_\RT|_E\cdot \nu_E \in P_0(E)$.
  Conversely, for any $t_0\in P_0(\Edges)$, there exists a unique $q_\RT\in
  RT_0(\Tri)$ with $q_\RT|_E\cdot \nu_E =t_0|_E$ for any $E\in\Edges$, in
  short notation \(q_\RT\cdot \nu=t_0\) in \(\partial\Tri\).
  Since $\Vert t_0 \Vert_{H^{-1/2}(\partial\Tri)}\approx
  \NormHDivDom{q_\RT}$, this identification justifies the embedding
  $P_0(\Edges)\subseteq H^{-1/2}(\partial\Tri)$, where any $T\in\Tri$ and
  $E\in\Edges(T)$ satisfy $(q_\RT\cdot \nu_T)|_E= \pm t_0|_E$ with the
  sign \(\pm = \nu_T \cdot \nu_E\) depending on the (globally fixed) choice of the
  orientation of the unit normal \(\nu_E \in \{\nu_{T_\pm}|_E\}\).
\end{remark}

\begin{definition}
 Define $S_0\in P_0(\Tri;\R^{n\times n})$ and $H_0:L^2(\Omega)\rightarrow P_0(\Tri;\R^n)$ for \(T\in\Tri\) and \(f\in\Ltwo\) by
\begin{equation}
  \label{eq:S0H0}
    \begin{split}
      S_0|_T&\coloneq \Pi_0 ((\pbullet-\Mid(T))\otimes (\pbullet-\Mid(T)) ),\\
      H_0 f&\coloneq \Pi_0(f(\pbullet - \Mid(\Tri)))\in P_0(\Tri;\R^n).
    \end{split}
\end{equation}
\end{definition}

\begin{remark}\label{rem:properties_S0}
 An analysis of the eigenvalues of the piecewise symmetric positive semi-definite matrix \(S_0\) shows that any \(T\in\Tri\) and $v\in\R^n$ satisfies
 \[
  \vert v \vert \leq \vert (I_{n\times n} + S_0|_T) v \vert \leq (1+h_T^2) \vert v \vert
  \text{ and }
  \vert v \vert \leq \vert (I_{n\times n} + S_0|_T)^{1/2} v \vert \leq (1+h_T) \vert v \vert.
 \]
 Furthermore, \(\NormLzDom{H_0 f}\leq h_{\textup{max}} \NormLzDom{(1-\Pi_0)
 f}\) for the maximal mesh-size \(h_{\textup{max}} = \max h_\Tri\) in
 \(\Tri\).
\end{remark}

\subsection{Lowest-order dPG discretization}\label{sec:dpg_discretization}
The nonlinear model problem of this paper concerns the nonlinear map
\(\sigma: \R^n \to \R^n\) of Subsection~\ref{sec:convex_energy_minimization}. A piecewise integration by parts in \eqref{eq:EulerLagrange} and the introduction of the new variable \(t\coloneq \sigma(\nabla u)\cdot \nu\) on \(\partial\Tri\) leads to
the nonlinear primal dPG method with \(F(v)\coloneq \int_\Omega f v\dx\) and
\(b: X \times Y \to \R\) for \(X \coloneq \changed{H^1_0(\Omega)} \times H^{-1/2}(\partial\Tri)\)
and \(Y \coloneq H^1(\Tri)\) defined by
\begin{equation}
\label{eq:def_b}
  b(u, t; v) \coloneq
  \int_\Omega \sigma(\grad u) \cdot \gradNC v \dx
  - \langle t, v \rangle_{\partial\Tri}
  \eqcolon \langle B(u,t), y \rangle_Y.
\end{equation}
for all \(x = (u, t) \in X \coloneq \changed{H^1_0(\Omega)} \times
H^{-1/2}(\partial\Tri)\) and \(y = v \in Y = H^1(\Tri)\) with associated
norms and the scalar product \(a\) in \(Y\).
Given the subspaces \(X_h \coloneq S^1_0(\Tri) \times P_0(\Edges)\)
and \(Y_h \coloneq P_1(\Tri)\), the discrete problem minimizes the
residual norm and seeks \((u_h, t_h) = x_h \in X_h\) with
\begin{equation}
  \label{eq:model_dpg}
  \Vert F -B(x_h)\Vert_{Y_h^*}
  = \min_{\xi_h \in X_h} \Vert F- B(\xi_h)\Vert_{Y_h^*}.
\end{equation}
The derivative \(\Grad \sigma : \R^n \to \R^{n \times n}\) gives rise to
the map
\begin{equation}
\label{eq:model_derivative_b}
  b'(u,t; w, s, v)
  \coloneq
  \int_\Omega \grad w \cdot (\Grad \sigma(\grad u) \gradNC v) \dx
  - \langle s, v \rangle_{\partial \Tri}.
\end{equation}
This defines a bounded bilinear form \(b'(u,t;\pbullet) : X\times
Y \to \R\) for any \(x = (u,t) \in X\) and the operator \(B\) associated with \(b\) belongs to \(C^1(X;Y^*)\).
Recall the equivalent mixed formulation from~\eqref{eq:mixed} for the model
problem at hand, which seeks \((u_h, t_h) \in X_h\) and \(v_h \in Y_h\) with
\begin{equation}
  \label{eq:model_mixed}
  \begin{split}
    a(v_h, \eta_h) + b(u_h, t_h; \eta_h)
    &= F(\eta_h)
    \quad\text{for all } \eta_h \in Y_h,\\
    b'(u_h, t_h; w_h, s_h, v_h)
    &=0
    \quad\text{for all } (w_h, s_h) \in X_h.
  \end{split}
\end{equation}

\begin{remark}[regular solution]
  Since \(\Grad \sigma(\nabla u)\in L^\infty(\Dom;\R^{n\times
  n}_{\operatorname{sym}})\) uniformly \changed{positive} definite, the splitting
  lemma from the linear theory \cite[Thm.~3.3]{MR3521055} implies the
  inf-sup condition \eqref{eq:continuous_inf_sup} for the nondegenerate
  bilinear form \(b'(x;\pbullet,\pbullet):X\times Y\rightarrow \R\). Hence,
  the solution \(x\in X\) to \(B(x)=F\) is regular.
\end{remark}

\subsection{Reduced discretization}
The dPG discretization~\eqref{eq:model_dpg} can be simplified to a
modified problem that seeks \((u_h, v_h) \in S^1_0(\Tri) \times CR^1_0(\Tri)\) with
 \begin{equation}
    \label{eq:model_reduced}
    \tag{R}
    \begin{split}
      a(v_h, w_\CR) + \int_\Omega \sigma(\grad u_h) \cdot \gradNC w_\CR \dx
      &=
      \int_\Omega f w_\CR \dx
      \text{ for all } w_\CR \in CR^1_0(\Tri),\\
      \int_\Omega \grad w_\Conf
      \cdot \big( \Grad \sigma(\grad u_h) \gradNC v_h \big) \dx
      &= 0
      \text{ for all } w_\Conf \in S^1_0(\Tri).
    \end{split}
  \end{equation}
\begin{theorem}[\eqref{eq:model_mixed}\(\iff\)\eqref{eq:model_reduced}]
  \label{thm:reduced_formulation}
  (a)\;
  If \((u_h, t_h; v_h) \in X_h \times Y_h\) solves~\eqref{eq:model_mixed},
  then \(v_h \in CR^1_0(\Tri)\) and \((u_h, v_h) \in S^1_0(\Tri) \times
  CR^1_0(\Tri)\) solves~\eqref{eq:model_reduced}.

  (b)\;
  \changed{For any solution \((u_h, v_h) \in S^1_0(\Tri) \times CR^1_0(\Tri)\)}
  to~\eqref{eq:model_reduced}, there exists a unique \(t_h\in P_0(\Edges)\)
  such that \((u_h,t_h; v_h)\) solves~\eqref{eq:model_mixed}.
\end{theorem}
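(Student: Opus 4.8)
The plan is to exploit the identification $RT_0(\Tri)\equiv P_0(\Edges)\subset H^{-1/2}(\partial\Tri)$ from Remark~\ref{rem:identification_RT_P0Edges} together with the fact that the test-search space $Y_h=P_1(\Tri)$ decomposes as $P_1(\Tri)=S^1_0(\Tri)\oplus CR^1_0(\Tri)\oplus(\text{bubble-type remainder})$, or more precisely that a piecewise affine $v_h$ is characterized by its edge-midpoint values. The key observation is that testing the first equation of~\eqref{eq:model_mixed} with the right class of functions forces $v_h$ into $CR^1_0(\Tri)$, after which the duality bracket $\langle t_h,\pbullet\rangle_{\partial\Tri}$ against Crouzeix--Raviart functions collapses to something that determines $t_h$ uniquely.

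For part~(a), first I would show $v_h\in CR^1_0(\Tri)$. Take $\eta_h\in Y_h=P_1(\Tri)$ supported on a single element $T$ (or on a patch) and vanishing at all edge midpoints except possibly testing the jump across one interior edge $E$; since $S^1_0(\Tri)\subset Y_h$, the first equation already holds for continuous $\eta_h$, and probing with the complementary (edge-oscillation) directions shows $\langle t_h,v_h\rangle_{\partial\Tri}$-type terms combined with $\int\sigma(\nabla u_h)\cdot\nabla_\NC\eta_h$ and $\int f\eta_h$ must balance; because $t_h\in P_0(\Edges)$ sees only midpoint values of the trace while $\sigma(\nabla u_h)$ is piecewise constant and $f\eta_h$ integrates a known quantity, the remaining freedom in $\eta_h$ forces the midpoint jumps $[v_h](\Mid(E))=0$, i.e.\ $v_h\in CR^1_0(\Tri)$ (the homogeneous boundary values coming from the edges on $\partial\Omega$). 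Once $v_h\in CR^1_0(\Tri)$, restrict the first equation of~\eqref{eq:model_mixed} to test functions $w_\CR\in CR^1_0(\Tri)$: for such $w_\CR$ the skeleton duality satisfies $\langle t_h,w_\CR\rangle_{\partial\Tri}=0$ because $t_h\cdot\nu$ is single-valued on each interior edge and $w_\CR$ is continuous at midpoints with $w_\CR=0$ at boundary midpoints, and (crucially) the $RT_0$--$P_0(\Edges)$ identification plus a midpoint/barycentre quadrature argument makes $\langle t_h,w_\CR\rangle_{\partial\Tri}=\sum_{T}\int_T(\Div q_h)w_\CR+q_h\cdot\nabla_\NC w_\CR$ telescope to zero. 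That yields exactly the first equation of~\eqref{eq:model_reduced}. The second equation of~\eqref{eq:model_mixed} tested over $(w_h,s_h)$ with $s_h=0$ and $w_h\in S^1_0(\Tri)$ is literally the second equation of~\eqref{eq:model_reduced}; and testing with $w_h=0$, $s_h\in P_0(\Edges)$ arbitrary gives $\langle s_h,v_h\rangle_{\partial\Tri}=0$, which is automatically consistent with $v_h\in CR^1_0(\Tri)$.

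For part~(b), given $(u_h,v_h)\in S^1_0(\Tri)\times CR^1_0(\Tri)$ solving~\eqref{eq:model_reduced}, I must construct $t_h\in P_0(\Edges)$ so that the \emph{full} first equation of~\eqref{eq:model_mixed} holds for all $\eta_h\in P_1(\Tri)$, not merely for $\eta_h\in CR^1_0(\Tri)$. Define the linear functional $G(\eta_h)\coloneq F(\eta_h)-a(v_h,\eta_h)-\int_\Omega\sigma(\nabla u_h)\cdot\nabla_\NC\eta_h\dx$ on $P_1(\Tri)$; by~\eqref{eq:model_reduced} it vanishes on $CR^1_0(\Tri)$, so $G$ factors through the quotient $P_1(\Tri)/CR^1_0(\Tri)$, which is canonically identified with the edge-midpoint-jump data on interior edges and the edge-midpoint values on boundary edges — precisely the degrees of freedom dual to $P_0(\Edges)$. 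Hence there is a unique $t_h\in P_0(\Edges)$ with $\langle t_h,\eta_h\rangle_{\partial\Tri}=G(\eta_h)$ for all $\eta_h\in P_1(\Tri)$, which is the first equation of~\eqref{eq:model_mixed}; uniqueness of $t_h$ follows because the pairing of $P_0(\Edges)$ with midpoint-jump functionals is nondegenerate. The second equation of~\eqref{eq:model_mixed} with $w_h\in S^1_0(\Tri)$ is the second line of~\eqref{eq:model_reduced}, and with $s_h\in P_0(\Edges)$ it reads $\langle s_h,v_h\rangle_{\partial\Tri}=0$, which holds since $v_h$ is continuous at interior edge midpoints and vanishes at boundary edge midpoints, so the midpoint pairing with any $s_h\in P_0(\Edges)$ is zero.

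\textbf{Main obstacle.} The delicate point is the exact matching between the broken-$H^{-1/2}$ duality $\langle\pbullet,\pbullet\rangle_{\partial\Tri}$ restricted to $P_0(\Edges)\times P_1(\Tri)$ and a purely edge-midpoint pairing: one must justify that for $t_h\in P_0(\Edges)\subset H^{-1/2}(\partial\Tri)$ (via its $RT_0$ representative $q_h$) and $\eta_h\in P_1(\Tri)$ the bracket equals $\sum_{E\in\Edges}|E|\,(t_h\cdot\nu_E)\,[\eta_h](\Mid(E))$ up to the standard sign bookkeeping — this rests on the elementwise identity $\langle q_h\cdot\nu_T,\eta_h\rangle_{\partial T}=\int_T(\Div q_h)\eta_h\dx+\int_T q_h\cdot\nabla\eta_h\dx$ and the fact that both integrands are constant times affine so that the midpoint (barycentre) quadrature is exact. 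Getting the orientation signs right and confirming the quotient $P_1(\Tri)/CR^1_0(\Tri)\cong P_0(\Edges)^*$ via this pairing is where the real work lies; everything else is bookkeeping with the two equations of the mixed system.
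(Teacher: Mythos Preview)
Your part~(a) contains a genuine misstep: you attempt to extract \(v_h\in CR^1_0(\Tri)\) from the \emph{first} equation of~\eqref{eq:model_mixed}. That equation cannot deliver this conclusion, because it only says \(a(v_h,\pbullet)=F-b(u_h,t_h;\pbullet)\) in \(Y_h^*\), i.e.\ \(v_h\) is the Riesz representation of the residual; for an arbitrary unknown \(t_h\) the resulting \(v_h\) need not have vanishing midpoint jumps, and no amount of probing with edge-oscillation test functions changes that --- the freedom in \(t_h\) absorbs exactly those degrees of freedom. The membership \(v_h\in CR^1_0(\Tri)\) comes instead from the \emph{second} equation of~\eqref{eq:model_mixed} with \(w_h=0\): this reads \(\langle s_h,v_h\rangle_{\partial\Tri}=0\) for all \(s_h\in P_0(\Edges)\), which by the very midpoint-pairing identity you spell out in your ``Main obstacle'' is equivalent to \(v_h\in CR^1_0(\Tri)\). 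You do state this fact, but only as an afterthought (``automatically consistent''); in fact it is the whole argument, and this is how the paper proceeds --- one line, then~\eqref{eq:model_mixed} collapses to~\eqref{eq:model_reduced}.

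Your part~(b) is correct and takes a different, arguably more elementary, route than the paper. You argue directly that the functional \(G=\Lambda_h\) vanishes on \(CR^1_0(\Tri)\), identify the quotient \(P_1(\Tri)/CR^1_0(\Tri)\) with \(P_0(\Edges)^*\) via the midpoint pairing (dimension count plus nondegeneracy), and read off \(t_h\). The paper instead invokes the discrete inf-sup condition of the \emph{linear} primal dPG bilinear form \(\widetilde b\) (Lemma~\ref{lem:discrete_inf_sup_linear}), sets up an auxiliary linear mixed system~\eqref{eq:linear_auxiliary} with right-hand side \(-\Lambda_h\), solves it by the Brezzi splitting lemma, and then argues that its solution yields the desired \(t_h\). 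Your approach avoids the Brezzi machinery at the price of carrying out the edge-midpoint bookkeeping explicitly; the paper's approach recycles known linear dPG stability and hides that bookkeeping inside the cited inf-sup lemma.
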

The proof utilizes the following discrete inf-sup condition of a {\em linear}
primal dPG method \cite{MR3093480}. Let the bilinear forms \(a_\NC:H^1(\Tri)\times H^1(\Tri)\to \R\) and \(\widetilde b: X \times Y \to \R\) be defined by
\begin{align*}
 a_\NC(v_1, v_2) &\coloneq \int_\Omega \gradNC v_1 \cdot \gradNC v_2 \dx \quad\text{ for }v_1,v_2\in H^1(\Tri),\\
 \widetilde b(x, y)
    &\coloneq
    a_\NC(u, w)
    - \langle t, w \rangle_{\partial\Tri}\quad\text{ for }x = (u, t) \in X, y = w \in Y.
\end{align*}
\begin{lemma}
  \label{lem:discrete_inf_sup_linear}
  The bilinear form \(\widetilde b: X_h \times Y_h \to \R\) satisfies the discrete inf-sup condition
  \begin{equation}
  \label{eq:discrete_inf_sup_linear}
    0 < \widetilde \beta_h \coloneq
    \inf_{\xi_h \in \sphere{X_h}}
    \sup_{\eta_h \in \sphere{Y_h}}
    \widetilde b(\xi_h, \eta_h).
  \end{equation}
\end{lemma}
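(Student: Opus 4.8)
The plan is to establish the discrete inf-sup condition \eqref{eq:discrete_inf_sup_linear} for $\widetilde b$ on $X_h\times Y_h = (S^1_0(\Tri)\times P_0(\Edges))\times P_1(\Tri)$ by a Fortin-type argument: exhibit, for every $\xi_h=(u_\Conf,t_0)\in X_h$, a test function $\eta_h\in Y_h$ with $\widetilde b(\xi_h,\eta_h)\gtrsim \Vert\xi_h\Vert_X\,\Vert\eta_h\Vert_Y$, by splitting the norm $\Vert(u_\Conf,t_0)\Vert_X^2 = \NormEnergy{u_\Conf}^2 + \Vert t_0\Vert_{H^{-1/2}(\partial\Tri)}^2$ into the two contributions and controlling each. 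This is exactly the structure of the linear primal dPG analysis in \cite{MR3093480,MR3521055}, so I would cite that framework and only indicate how the lowest-order spaces here fit it.

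\textbf{Step 1 (control of the volume part).} For $\xi_h=(u_\Conf,t_0)$ choose first $\eta_h^{(1)} \coloneq u_\Conf \in S^1_0(\Tri)\subset P_1(\Tri)=Y_h$. Since $u_\Conf\in H^1_0(\Omega)$ is continuous, $\langle t_0,u_\Conf\rangle_{\partial\Tri}=0$ (the jumps of $u_\Conf$ across interior sides vanish and $u_\Conf$ vanishes on $\partial\Omega$), hence $\widetilde b(\xi_h,\eta_h^{(1)}) = a_\NC(u_\Conf,u_\Conf) = \NormEnergy{u_\Conf}^2 = \Vert\eta_h^{(1)}\Vert_Y\,\NormEnergy{u_\Conf}$ up to the equivalence $\Vert v\Vert_Y\approx\NormEnergy{v}+\NormLzDom{v}$ on $Y$; a Friedrichs/Poincaré inequality on the broken space (or simply working with the $H^1(\Tri)$-seminorm plus a discrete Poincaré estimate) handles the lower-order term. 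This yields a lower bound of the volume seminorm of $u_\Conf$.

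\textbf{Step 2 (control of the normal-trace part).} To capture $\Vert t_0\Vert_{H^{-1/2}(\partial\Tri)}$, use the identification $P_0(\Edges)\equiv RT_0(\Tri)$ from Remark~\ref{rem:identification_RT_P0Edges}, so $t_0$ corresponds to a unique $q_\RT\in RT_0(\Tri)$ with $q_\RT\cdot\nu = t_0$ on $\partial\Tri$ and $\Vert t_0\Vert_{H^{-1/2}(\partial\Tri)}\approx\NormHDivDom{q_\RT}$. The crucial ingredient is a discrete integration-by-parts identity: for $w_\CR\in CR^1_0(\Tri)$ one has $\int_\Omega q_\RT\cdot\gradNC w_\CR\dx = \langle t_0,w_\CR\rangle_{\partial\Tri} + \int_\Omega(\Div q_\RT)\,w_\CR\dx$ because the edge-midpoint continuity of Crouzeix--Raviart functions exactly matches the constant normal components of $RT_0$. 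Picking a suitable $\eta_h^{(2)}\in P_1(\Tri)$ (a Crouzeix--Raviart function built from $q_\RT$, or one realizing the supremum in the $H^{-1/2}$-duality restricted to the discrete skeleton) makes $-\langle t_0,\eta_h^{(2)}\rangle_{\partial\Tri}$ comparable to $\Vert t_0\Vert_{H^{-1/2}(\partial\Tri)}\,\Vert\eta_h^{(2)}\Vert_Y$, while the spurious term $a_\NC(u_\Conf,\eta_h^{(2)})$ is absorbed by the bound from Step~1 after a Cauchy--Schwarz and a weighted-Young estimate.

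\textbf{Step 3 (combine).} Take $\eta_h \coloneq \eta_h^{(1)} + \kappa\,\eta_h^{(2)}$ for a fixed small $\kappa>0$ chosen so that the cross terms from Step~2 are dominated, giving $\widetilde b(\xi_h,\eta_h)\gtrsim \NormEnergy{u_\Conf}^2 + \Vert t_0\Vert_{H^{-1/2}(\partial\Tri)}^2 \approx \Vert\xi_h\Vert_X^2$ with $\Vert\eta_h\Vert_Y\lesssim\Vert\xi_h\Vert_X$; dividing through produces the claimed positive constant $\widetilde\beta_h$. I expect the main obstacle to be \textbf{Step~2}: one must be careful that the duality $\langle t_0,\cdot\rangle_{\partial\Tri}$ restricted to the discrete test space $P_1(\Tri)$ still controls the full $H^{-1/2}(\partial\Tri)$-norm of $t_0$ and not merely a weaker discrete seminorm. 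This is precisely where the equivalence $\Vert t_0\Vert_{H^{-1/2}(\partial\Tri)}\approx\NormHDivDom{q_\RT}$ and the perfect pairing between $RT_0$ and $CR^1$ (via midpoint quadrature, which is exact for affine functions) are used, and it is the content of the cited linear dPG inf-sup theorem \cite{MR3093480,MR3521055}; the proof here is really a matter of quoting that result for the specific lowest-order pair $(X_h,Y_h)$ and noting $\widetilde b$ is exactly its bilinear form with $\sigma=\id$.
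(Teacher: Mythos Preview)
Your proposal is correct and essentially matches the paper: its proof is a one-sentence citation to \cite[Thm.~3.5]{MR3279489} for the lowest-order linear primal dPG, and you likewise conclude by deferring to the linear dPG literature (citing \cite{MR3093480,MR3521055} instead). You additionally spell out the underlying Fortin-type mechanism in Steps~1--3, which the paper omits entirely; that sketch is sound, and your flagging of Step~2 (controlling $\Vert t_0\Vert_{H^{-1/2}(\partial\Tri)}$ via the $RT_0$/$CR^1$ pairing) as the delicate point is accurate.
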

\begin{proof}
  The proof follows the arguments from \cite[Thm.~3.5]{MR3279489} for the bilinear form \(\widetilde b\) in the
  lowest-order scheme at hand.
  % or \cite[Thm.~4.1]{MR3093480}
  \qed
\end{proof}
\begin{proof}[of Theorem~\ref{thm:reduced_formulation}]
  (a)\; Since
  \(b'(x_h; 0, s_h, v_h) = -\langle s_h, v_h \rangle_{\partial\Tri} =
  0\) for all \(s_h \in P_0(\Edges)\), \(v_h \in CR^1_0(\Tri)\).
  Then,~\eqref{eq:model_mixed} reduces to~\eqref{eq:model_reduced}.

  (b)\; Conversely, suppose \((u_h, v_h)\) solves~\eqref{eq:model_reduced}, then the
  second equation in~\eqref{eq:model_mixed} follows from the second equation
  in~\eqref{eq:model_reduced} and \(v_h \in CR^1_0(\Tri)\).
  The first equation in~\eqref{eq:model_reduced} leads to the first equation
  in~\eqref{eq:model_mixed} for any \(t_h \in P_0(\Edges)\) and test functions in
  \(CR^1_0(\Tri)\).
  In other words, the linear functional
  \[
    \Lambda_h
    \coloneq
    a(v_h, \pbullet)
    + \int_\Omega \sigma(\grad u_h) \cdot \gradNC \pbullet \dx - F \in Y_h^*
  \]
  vanishes on \(CR^1_0(\Tri) \subset \ker \Lambda_h\).
  It remains to show that there exists \(t_h \in P_0(\Edges)\) with
  \(\langle t_h, \pbullet \rangle_{\partial\Tri} = \Lambda_h\), because then
  \((u_h, t_h, v_h)\) solves~\eqref{eq:model_mixed}.
  To prove the existence of such a \(t_h\) for \(\Lambda_h \in Y_h^*\) with
  \(CR^1_0(\Tri) \subset \ker \Lambda_h\), recall the bilinear form \(\widetilde b\) from
  Lemma~\ref{lem:discrete_inf_sup_linear} with discrete inf-sup
  condition~\eqref{eq:discrete_inf_sup_linear} and consider the linear problem that seeks \((u_h,t_h,v_h)\in X_h\times Y_h\) with
  \begin{equation}
    \tag{L}
    \label{eq:linear_auxiliary}
    \begin{split}
      a_\NC(v_h, w_1) + \widetilde b(u_h,t_h,w_1)
      &=
      -\Lambda_h(w_1)
      \quad\text{for all } w_1 \in Y_h,\\
      \widetilde b(w_C,s_0,v_h)
      &=0
      \quad\text{for all } (w_\Conf, s_0) \in X_h.
    \end{split}
  \end{equation}
  Since \(\langle r_0, v_1 \rangle_{\partial\Tri} = 0\) for \(v_1\in P_1(\Tri)\) and for all
  \(r_0 \in P_0(\Edges)\) implies \(v_1 \in CR^1_0(\Tri) \subset
  P_1(\Tri)\), the kernel
  \[
    Z_h
    \coloneq
    \lbrace v_1 \in P_1(\Tri) \,|\, \widetilde b(x_h, v_1) = 0
    \text{ for all } x_h \in X_h \rbrace
  \]
  of \(\widetilde b\)
  consists of particular Crouzeix-Raviart functions, \(Z_h \subset
  CR^1_0(\Tri)\), and the discrete Friedrichs inequality \cite[p.\
  301]{MR2373954}
  shows that \(a_\NC\) is \(Z_h\)-elliptic.
  % Braess: p. 132
  Hence, the Brezzi splitting lemma \cite[Thm.~4.3 in Ch.~III]{MR2322235} applies
  to the
  linear system
  \eqref{eq:linear_auxiliary} and \eqref{eq:linear_auxiliary} has a unique solution
  \((u_h, t_h, v_h) \in X_h \times Y_h\).
  The test of the first equation in~\eqref{eq:linear_auxiliary} with
  \(w_1 \in CR^1_0(\Tri) \subset \ker \Lambda_h\) shows \(a_\NC(v_h + u_h,
  \pbullet) = 0\) in \(CR^1_0(\Tri)\). The second equation
  in~\eqref{eq:linear_auxiliary} implies \(v_h \in CR^1_0(\Tri)\) and this
  proves \(v_h = - u_h\). This leads to \(\langle t_h, \pbullet
    \rangle_{\partial \Tri} = \Lambda_h\) in \(P_1(\Tri)\).
  The uniqueness of \(t_h\) follows from the fact that \(\langle t_h, \pbullet \rangle_{\partial\Tri} = 0\) in \(P_1(\Tri)\) implies \( t_h=0\).
  \qed
\end{proof}

\subsection{Least-squares formulation}
\label{sec:least-squares}
Recall $S_0\in P_0(\Tri;\R^{n\times n})$ and $H_0:L^2(\Omega)\rightarrow
P_0(\Tri;\R^n)$ from \eqref{eq:S0H0} to define an equivalent least-squares
formulation.
\begin{theorem}[dPG is LS]\label{thm:dpg_as_ls}
Any $x_h=(u_C,t_0)\in X_h$ and $p_\RT\in RT_0(\Tri)$ with $p_\RT\cdot\nu= t_0$ in \(\partial\Tri\) satisfy
\begin{equation}\label{eq:weighted_ls}
  \begin{aligned}
  \Norm{F-b(x_h;\pbullet)}{Y_h^*}^2 &= \NormLzDom{( I_{n\times n}+S_0) ^{-1/2}\big(\Pi_0 p_\RT - \sigma( \nabla  u_C)  + H_0 f\big)}^2\\
  &\quad+ \NormLzDom{\Pi_0 f + \Div p_\RT}^2.
  \end{aligned}
\end{equation}
Consequently, any solution $x_h=(u_C,t_0)\in X_h$ to \eqref{eq:model_mixed} and $p_\RT\cdot\nu= t_0$ in \(\partial\Tri\) from Remark~\ref{rem:identification_RT_P0Edges} minimizes the weighted least-squares functional \eqref{eq:weighted_ls}.
\end{theorem}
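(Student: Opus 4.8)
The plan is to evaluate the dual norm \(\Norm{F-b(x_h;\pbullet)}{Y_h^*}\) explicitly as the Rayleigh quotient \(\sup_{v_h\in Y_h\setminus\{0\}}(F(v_h)-b(x_h;v_h))/\norm{v_h}{Y}\) and to recognise the outcome as the square root of the right-hand side of \eqref{eq:weighted_ls}. The decisive structural fact is that \(Y_h=P_1(\Tri)\) is broken, so this supremum and the norm \(\norm{v_h}{Y}^2=\NormLzDom{v_h}^2+\NormLzDom{\gradNC v_h}^2\) both decouple over the simplices \(T\in\Tri\); on each \(T\) every \(v_h|_T\) is parametrised as \(c_T+g_T\cdot(\pbullet-\Mid(T))\) with \(c_T\in\R\), \(g_T\in\R^n\), so that \((\gradNC v_h)|_T=g_T\) and \((\Pi_0 v_h)|_T=c_T\).

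First I would rewrite \(v_h\mapsto F(v_h)-b(x_h;v_h)\) with the help of \(p_\RT\in RT_0(\Tri)\subset H(\Div,\Omega)\) and the identity \(p_\RT\cdot\nu=t_0\) in \(\partial\Tri\): a piecewise integration by parts turns \(\langle t_0,v_h\rangle_{\partial\Tri}=\sum_{T\in\Tri}\langle(p_\RT|_T)\cdot\nu_T,\,v_h|_{\partial T}\rangle_{\partial T}\) into \(\int_\Omega(\Div p_\RT)v_h\dx+\int_\Omega p_\RT\cdot\gradNC v_h\dx\), so that
\[
  F(v_h)-b(x_h;v_h)=\int_\Omega(f+\Div p_\RT)\,v_h\dx+\int_\Omega\big(p_\RT-\sigma(\grad u_C)\big)\cdot\gradNC v_h\dx.
\]
Next I would insert the elementwise parametrisation and use that \(\Div p_\RT\in P_0(\Tri)\) and \(\sigma(\grad u_C)\in P_0(\Tri;\R^n)\) are piecewise constant (the latter because \(u_C\in S^1_0(\Tri)\) has piecewise constant gradient), together with \(\int_T(\pbullet-\Mid(T))\dx=0\), \(\int_T(\pbullet-\Mid(T))\otimes(\pbullet-\Mid(T))\dx=|T|\,S_0|_T\) and \(\int_T f\,(\pbullet-\Mid(T))\dx=|T|\,(H_0 f)|_T\) from \eqref{eq:S0H0}. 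Setting \(\alpha_T\coloneq(\Pi_0 f+\Div p_\RT)|_T\in\R\) and \(\beta_T\coloneq(\Pi_0 p_\RT-\sigma(\grad u_C)+H_0 f)|_T\in\R^n\), a short computation then gives
\[
  F(v_h)-b(x_h;v_h)=\sum_{T\in\Tri}|T|\,(c_T\alpha_T+g_T\cdot\beta_T),\qquad
  \norm{v_h}{Y}^2=\sum_{T\in\Tri}|T|\,(c_T^2+g_T\cdot(I_{n\times n}+S_0|_T)g_T).
\]

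The last step is a maximisation over \((c_T,g_T)_{T\in\Tri}\). Since \(I_{n\times n}+S_0|_T\) is symmetric positive definite (Remark~\ref{rem:properties_S0}), \((c,g)\mapsto c^2+g\cdot(I_{n\times n}+S_0|_T)g\) is an inner product on \(\R\times\R^n\); writing \(g_T\cdot\beta_T=g_T\cdot(I_{n\times n}+S_0|_T)(I_{n\times n}+S_0|_T)^{-1}\beta_T\), two Cauchy--Schwarz inequalities (first elementwise in this inner product, then across \(T\in\Tri\)) bound the Rayleigh quotient by \(\big(\sum_{T\in\Tri}|T|\,(\alpha_T^2+\beta_T\cdot(I_{n\times n}+S_0|_T)^{-1}\beta_T)\big)^{1/2}\), with equality for the admissible \(v_h\) defined through \((c_T,g_T)\coloneq(\alpha_T,(I_{n\times n}+S_0|_T)^{-1}\beta_T)\). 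As \(\alpha_T\) and \((I_{n\times n}+S_0|_T)^{-1/2}\beta_T\) are piecewise constant, the two sums equal \(\NormLzDom{\Pi_0 f+\Div p_\RT}^2\) and \(\NormLzDom{(I_{n\times n}+S_0)^{-1/2}(\Pi_0 p_\RT-\sigma(\grad u_C)+H_0 f)}^2\); this is \eqref{eq:weighted_ls}. The ``consequently'' is then immediate: \eqref{eq:weighted_ls} identifies, under the bijection \(RT_0(\Tri)\cong P_0(\Edges)\) of Remark~\ref{rem:identification_RT_P0Edges}, the residual minimised in \eqref{eq:model_dpg} over \(X_h\) with the functional \eqref{eq:weighted_ls} over \(S^1_0(\Tri)\times RT_0(\Tri)\), so its minimiser \((u_C,t_0)\), which solves \eqref{eq:model_mixed} by Theorem~\ref{thm:mixed_formulation}, corresponds via \(p_\RT\cdot\nu=t_0\) to a minimiser of \eqref{eq:weighted_ls}.

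I do not expect a conceptual obstacle; the work is entirely bookkeeping: one must use the precise (unscaled, broken \(H^1\)) inner product on \(Y\) so that exactly the weight \(I_{n\times n}+S_0|_T\), and not a mesh-scaled variant, emerges, keep track of which quantities already lie in \(P_0(\Tri)\) so that \(\Pi_0\) acts trivially on them, and verify that the Cauchy--Schwarz chain is sharp, i.e.\ that the supremum in the dual norm is genuinely attained.
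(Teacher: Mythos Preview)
Your proposal is correct and follows essentially the same approach as the paper: both rewrite \(F-b(x_h;\pbullet)\) via integration by parts with \(p_\RT\), decompose test functions in \(P_1(\Tri)\) into piecewise constant and gradient parts, and exploit the resulting block structure in the (broken) \(H^1\) inner product. The only cosmetic difference is that the paper explicitly computes the Riesz representation \(v_1\) of the residual (by testing against \(\chi_T\) and \(\chi_T e_k\cdot(\pbullet-\Mid(T))\)) and then invokes the Riesz isometry, whereas you maximise the Rayleigh quotient directly via Cauchy--Schwarz; your maximiser \((c_T,g_T)=(\alpha_T,(I_{n\times n}+S_0|_T)^{-1}\beta_T)\) is precisely the paper's Riesz representative.
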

\begin{proof}
  Let $v_1 \in P_1(\Tri) \equiv Y_h$ be the Riesz representation of
  $b(x_h;\pbullet)-F\in Y_h^*$, i.e., any $ w_1 \in P_1(\Tri)$
  satisfies
  \begin{align*}
    \changed{a( v_1 , w_1 )} = b(x_h; w_1 )-F( w_1 ).
  \end{align*}
The substitution of $ t_0 =p_\RT\cdot\nu$ based on the isometry in Remark~\ref{rem:identification_RT_P0Edges} and an integration by parts lead to
\begin{align*}
 b(x_h; w_1 )-F( w_1 )
&= \int_\Omega (\sigma(\nabla  u_C) -p_\RT) \cdot \gradNC  w_1 \dx - \int_\Omega (f+\Div p_\RT)  w_1 \dx.
\end{align*}
With $ w_1 =\Pi_0  w_1 + \gradNC w_1 \cdot (\pbullet-\operatorname{mid}(\Tri))$, this results in
\begin{align*}
  &\int_\Omega \Pi_0 v_1 \Pi_0 w_1 \dx
  + \int_\Omega ( I_{n\times n}+S_0)\gradNC v_1 \cdot\gradNC w_1 \dx\\
  &\qquad=
  \int_\Omega (\sigma(\nabla  u_C) -\Pi_0 p_\RT)\cdot \gradNC  w_1 \dx - \int_\Omega (\Pi_0 f+\Div p_\RT) \Pi_0 w_1 \dx\\
  &\qquad\phantom{{}={}}\qquad
  - \int_\Omega H_0 f\cdot\gradNC w_1 \dx.
\end{align*}
For any $T\in \Tri$, the choices $ w_1 =\chi_T$ and $ w_1 =\chi_T e_k \cdot
(\pbullet - \operatorname{mid}(\Tri))$, $k=1,\dots,n$, show
\begin{align*}
  \Pi_0 v_1  &= - (\Div p_\RT +\Pi_0 f),\\
 ( I_{n\times n}+S_0)\gradNC  v_1  &= \sigma(\nabla  u_C) -\Pi_0 p_\RT - H_0 f.
\end{align*}
The Riesz isometry and $\NormH{ v_1 }{1}{\Omega}^2 = \NormLzDom{\Pi_0 v_1 }^2+\NormLzDom{( I_{n\times n}+S_0)^{1/2}\gradNC v_1 }^2$ for any \(v_1\in P_1(\mathcal{T})\) conclude the proof.
  \qed
\end{proof}

\section{Mathematical analysis of dPG for the model problem}\label{sec:analysis}
This section analyses the low-order dPG method presented in
Section~\ref{sec:model_problem} and proves an a~posteriori result next to
the existence of a solution and applies the abstract framework from
Section~\ref{sec:abstract_dpg}.
Recall the discrete spaces \(X_h \coloneq S^1_0(\Tri) \times P_0(\Edges)\),
\(Y_h \coloneq P_1(\Tri)\), and the nonlinear map from \eqref{eq:def_b}.

\subsection{Well-posedness}
\changed{This subsection is devoted to the equivalence of the dPG
residuals and the errors.}
For \(q_\RT\in RT_0(\Tri)\) and \(v_C\in S^1_0(\Tri)\), the isomorphism
between \(RT_0(\Tri)\) and \(P_0(\Edges)\) from
Remark~\ref{rem:identification_RT_P0Edges} leads to the abbreviation
\(b(v_C,q_\RT;\pbullet) \coloneq
b((v_C,(q_\RT\cdot\nu_T)_{T\in\Tri});\pbullet)\).
\changed{Recall the energy norm \(\vvvert \pbullet
\vvvert = \Vert \pbullet \Vert_{L^2(\Omega)}\) in \(H^1_0(\Omega)\).}

\begin{theorem}\label{thm:normequivalence}
 The exact solution $u\in H^1_0(\Omega)$ to the model problem \eqref{eq:EulerLagrange} with stress $ p\coloneq \sigma(\nabla u)\in H(\Div,\Omega)$ and any discrete $(v_C, q_\RT)\in S^1_0(\Tri)\times RT_0(\Tri)$ satisfy the equivalence
\begin{align*}
 \NormHDiv{ p- q_\RT}{\Omega}^2 + \NormEnergy{u-v_C}^2 &\approx \Norm{F-b(v_C,q_\RT;\pbullet)}{Y_h^*}^2\\
 &\phantom{{}={}}\quad+ \NormLzDom{(1-\Pi_0) f}^2 +  \NormLzDom{(1-\Pi_0)q_\RT}^2.
\end{align*}
\end{theorem}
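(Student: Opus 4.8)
The plan is to reduce the right‑hand side to piecewise constant (hence computable) quantities via the least‑squares identity of Theorem~\ref{thm:dpg_as_ls} and then match these against the two error contributions $\NormLzDom{p-q_\RT}^2$ and $\NormEnergy{u-v_C}^2$. First I would choose $q_\RT\in RT_0(\Tri)$ with $q_\RT\cdot\nu=t_0$ in $\partial\Tri$ (Remark~\ref{rem:identification_RT_P0Edges}) and apply Theorem~\ref{thm:dpg_as_ls} to write
\[
\Norm{F-b(v_C,q_\RT;\pbullet)}{Y_h^*}^2 = \NormLzDom{(I_{n\times n}+S_0)^{-1/2}(\Pi_0 q_\RT-\sigma(\nabla v_C)+H_0 f)}^2 + \NormLzDom{\Pi_0 f+\Div q_\RT}^2 .
\]
By Remark~\ref{rem:properties_S0} the weighted and the unweighted $L^2(\Omega)$‑norms are equivalent up to the generic constant, so the weight may be dropped. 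The Euler--Lagrange equation~\eqref{eq:EulerLagrange} gives $\Div p=-f$ in $L^2(\Omega)$, hence $f+\Div q_\RT=-\Div(p-q_\RT)$; since $\Div q_\RT\in P_0(\Tri)$, this decomposes $L^2(\Omega)$‑orthogonally, so $\NormLzDom{\Pi_0 f+\Div q_\RT}^2+\NormLzDom{(1-\Pi_0)f}^2=\NormLzDom{\Div(p-q_\RT)}^2$. Consequently the right‑hand side of the theorem is equivalent to $\NormLzDom{\Pi_0 q_\RT-\sigma(\nabla v_C)+H_0 f}^2+\NormLzDom{\Div(p-q_\RT)}^2+\NormLzDom{(1-\Pi_0)q_\RT}^2$, and because $\NormLzDom{\Div(p-q_\RT)}^2$ is a summand of $\NormHDivDom{p-q_\RT}^2$ it remains to prove $\NormLzDom{p-q_\RT}^2+\NormEnergy{u-v_C}^2\approx\NormLzDom{\Pi_0 q_\RT-\sigma(\nabla v_C)+H_0 f}^2+\NormLzDom{\Div(p-q_\RT)}^2+\NormLzDom{(1-\Pi_0)q_\RT}^2$.

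Two elementary facts drive the rest. Since the eigenvalues of $\Grad\sigma$ lie in $[\gamma_1,\gamma_2]$ by~\eqref{eq:dsigma}, $\sigma$ is strongly monotone with constant $\gamma_1$ and, by Remark~\ref{rem:lipschitz}, Lipschitz with constant $\gamma_2$, so $\gamma_1\NormEnergy{u-v_C}\le\NormLzDom{p-\sigma(\nabla v_C)}\le\gamma_2\NormEnergy{u-v_C}$; moreover $(1-\Pi_0)p=(1-\Pi_0)(\sigma(\nabla u)-\sigma(\nabla v_C))$ because $\sigma(\nabla v_C)\in P_0(\Tri;\R^n)$, whence $\NormLzDom{(1-\Pi_0)p}\le\gamma_2\NormEnergy{u-v_C}$. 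Secondly, adding the vanishing term $\Pi_0((\Div q_\RT)(\pbullet-\Mid(\Tri)))$ and using $f=-\Div p$ together with the fact that $\sigma(\nabla v_C)$ and $\Pi_0 p$ are piecewise constant yields
\[
\Pi_0 q_\RT-\sigma(\nabla v_C)+H_0 f = \Pi_0(q_\RT-p) - \Pi_0((\Div(p-q_\RT))(\pbullet-\Mid(\Tri))) + \Pi_0(p-\sigma(\nabla v_C)) .
\]

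For efficiency (the bound $\gtrsim$ in the reduced equivalence) this identity, the $L^2(\Omega)$‑contractivity of $\Pi_0$, and $|\pbullet-\Mid(T)|\le h_T\le\Diam(\Omega)$ on $T\in\Tri$ give $\NormLzDom{\Pi_0 q_\RT-\sigma(\nabla v_C)+H_0 f}\lesssim\NormLzDom{p-q_\RT}+\NormLzDom{\Div(p-q_\RT)}+\NormEnergy{u-v_C}$ through the stress--strain bound, while $(1-\Pi_0)q_\RT=(1-\Pi_0)(q_\RT-p)+(1-\Pi_0)p$ together with the estimate for $\NormLzDom{(1-\Pi_0)p}$ gives $\NormLzDom{(1-\Pi_0)q_\RT}\lesssim\NormLzDom{p-q_\RT}+\NormEnergy{u-v_C}$. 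Squaring and adding (the $\Div$‑term being common) proves $\gtrsim$.

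For reliability (the bound $\lesssim$) the decisive step is an a~posteriori bound for the energy error. With $e\coloneq u-v_C\in H^1_0(\Omega)$, strong monotonicity, the identity $\int_\Omega\sigma(\nabla u)\cdot\nabla e\dx=F(e)$ from~\eqref{eq:EulerLagrange}, and the integration by parts $\int_\Omega q_\RT\cdot\nabla e\dx=-\int_\Omega(\Div q_\RT)\,e\dx$ (licit because $q_\RT\in H(\Div,\Omega)$ and $e\in H^1_0(\Omega)$) give
\[
\gamma_1\NormEnergy{e}^2 \le \int_\Omega(f+\Div q_\RT)\,e\dx + \int_\Omega(q_\RT-\sigma(\nabla v_C))\cdot\nabla e\dx .
\]
Splitting $q_\RT-\sigma(\nabla v_C)=(1-\Pi_0)q_\RT+(\Pi_0 q_\RT-\sigma(\nabla v_C)+H_0 f)-H_0 f$, invoking $\NormLzDom{H_0 f}\le h_{\textup{max}}\NormLzDom{(1-\Pi_0)f}$ (Remark~\ref{rem:properties_S0}), the Friedrichs inequality $\NormLzDom{e}\lesssim\NormEnergy{e}$, and $f+\Div q_\RT=-\Div(p-q_\RT)$, and then dividing by $\NormEnergy{e}$, bounds $\NormEnergy{u-v_C}$ by the square root of the reduced right‑hand side. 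The flux error follows from $\NormLzDom{p-q_\RT}\le\NormLzDom{p-\sigma(\nabla v_C)}+\NormLzDom{(1-\Pi_0)q_\RT}+\NormLzDom{\Pi_0 q_\RT-\sigma(\nabla v_C)+H_0 f}+\NormLzDom{H_0 f}$ together with $\NormLzDom{p-\sigma(\nabla v_C)}\le\gamma_2\NormEnergy{u-v_C}$, and $\NormLzDom{\Div(p-q_\RT)}^2$ is already present. I expect this reliability bound for $\NormEnergy{u-v_C}$ to be the main obstacle: one must integrate $q_\RT$ by parts so that only $\Div q_\RT$ — and, via Euler--Lagrange, only $f+\Div q_\RT$ with its oscillation $(1-\Pi_0)f$ — enters, and not $q_\RT$ or $f$ themselves, whose non‑constant, respectively oscillatory, parts are not controlled by the error.
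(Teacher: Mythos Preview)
Your proof is correct, but it takes a genuinely different route from the paper for the reliability direction. Both start the same way: apply Theorem~\ref{thm:dpg_as_ls}, drop the weight via Remark~\ref{rem:properties_S0}, and use the Pythagoras splits $\NormLzDom{\Pi_0 f+\Div q_\RT}^2+\NormLzDom{(1-\Pi_0)f}^2=\NormLzDom{\Div(p-q_\RT)}^2$ together with the orthogonality $(1-\Pi_0)q_\RT\perp P_0(\Tri;\R^n)$. The difference is in how the core equivalence $\NormHDivDom{p-q_\RT}^2+\NormEnergy{u-v_C}^2\approx\NormLzDom{f+\Div q_\RT}^2+\NormLzDom{q_\RT-\sigma(\nabla v_C)}^2$ is obtained. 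The paper proves a general lemma (Lemma~\ref{lem:equivalence}) valid for \emph{any} two pairs in $H(\Div,\Omega)\times H^1_0(\Omega)$: it linearizes the residual along a straight-line homotopy, producing a uniformly positive definite matrix field $M$, and for the reverse bound solves an auxiliary $M$-weighted Poisson problem to split $q-p$ into $M\nabla\alpha$ plus a divergence-free remainder, then exploits the $L^2$-orthogonality of $\nabla H^1_0(\Omega)$ and divergence-free fields. Your argument instead uses directly that $u$ solves the Euler--Lagrange equation: strong monotonicity plus $\int_\Omega\sigma(\nabla u)\cdot\nabla e\dx=F(e)$ and integration by parts with $q_\RT$ give $\NormEnergy{e}$ without any Helmholtz-type decomposition, and the flux error follows by triangle inequality through $\sigma(\nabla v_C)$. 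Your route is more elementary and self-contained (no auxiliary PDE, no orthogonal decomposition), at the price of being specific to the situation where one of the two pairs is the exact solution; the paper's lemma is a reusable norm equivalence for the first-order residual independent of any solution property.
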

The proof is based on a lemma on the nonlinear least-squares formulation.
The related least-squares formulation is associated with the nonlinear
residual \(\Res(f;\pbullet): H(\Div,\Dom) \times H^1_0(\Dom) \to L^2(\Dom)
\times L^2(\Dom; \R^n)\) for the first-order system of
\eqref{eq:EulerLagrange} and defined,  for \((p,u)\in H(\Div,\Dom)\times H^1_0(\Dom)\), by
\[
  \Res(f;p,u) \coloneq
  (f+\Div p,  p - \sigma(\grad u)).
\]

\begin{lemma}
  \label{lem:equivalence}
  Any \(( p,u),( q,v)\in H(\Div,\Dom) \times H^1_0(\Dom)\) satisfy
  \[
    \NormLzDom{\Res(f; p,u) - \Res(f; q,v)}^2
    \approx
    \NormHDivDom{ p -  q}^2
    + \NormEnergy{u - v}^2.
  \]
\end{lemma}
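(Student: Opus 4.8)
The plan is to expand the residual difference componentwise and reduce the two inequalities hidden in ``$\approx$'' to three standard ingredients: the Lipschitz bound $\operatorname{Lip}(\sigma)\le\gamma_2$ from Remark~\ref{rem:lipschitz}, the strong monotonicity of $\sigma$, and the Friedrichs inequality on $H^1_0(\Dom)$. Abbreviate $e\coloneq p-q\in H(\Div,\Dom)$, $w\coloneq u-v\in H^1_0(\Dom)$, and $r\coloneq e-(\sigma(\grad u)-\sigma(\grad v))\in L^2(\Dom;\R^n)$. From the definition of $\Res$ one reads off $\Res(f;p,u)-\Res(f;q,v)=(\Div e,\,r)$, so $\NormLzDom{\Res(f;p,u)-\Res(f;q,v)}^2=\NormLzDom{\Div e}^2+\NormLzDom{r}^2$, and the claim is that this is comparable to $\NormLzDom{\Div e}^2+\NormLzDom{e}^2+\NormEnergy{w}^2=\NormHDivDom{e}^2+\NormEnergy{w}^2$. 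No relation between $p$ and $\sigma(\grad u)$ is assumed, so the statement is purely an identity-plus-inequality about the structure of $\Res$.

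The easy direction (``$\lesssim$'', efficiency) uses only the triangle inequality and Remark~\ref{rem:lipschitz}: since $\sigma(0)=0$, the bound $\operatorname{Lip}(\sigma)\le\gamma_2$ gives $\NormLzDom{\sigma(\grad u)-\sigma(\grad v)}\le\gamma_2\NormEnergy{w}$, hence $\NormLzDom{r}\le\NormLzDom{e}+\gamma_2\NormEnergy{w}$, while $\NormLzDom{\Div e}\le\NormHDivDom{e}$. Squaring and summing yields $\NormLzDom{\Res(f;p,u)-\Res(f;q,v)}^2\lesssim\NormHDivDom{e}^2+\NormEnergy{w}^2$ with a constant depending only on $\gamma_2$.

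For the reverse direction (``$\gtrsim$'', reliability) the key point is that the Fr\'echet derivative $\Grad\sigma$ has eigenvalues in $[\gamma_1,\gamma_2]$ (recorded before Remark~\ref{rem:lipschitz}), so the integral representation $\sigma(A)-\sigma(B)=\int_0^1\Grad\sigma(sA+(1-s)B)(A-B)\ds$ of Remark~\ref{rem:lipschitz} implies the strong monotonicity $(\sigma(A)-\sigma(B))\cdot(A-B)\ge\gamma_1\vert A-B\vert^2$ for all $A,B\in\R^n$. Applied pointwise at $A=\grad u$, $B=\grad v$, integrated over $\Dom$, and combined with $\sigma(\grad u)-\sigma(\grad v)=e-r$, this gives
\[
  \gamma_1\NormEnergy{w}^2
  \le\int_\Omega(\sigma(\grad u)-\sigma(\grad v))\cdot\grad w\dx
  =\int_\Omega e\cdot\grad w\dx-\int_\Omega r\cdot\grad w\dx .
\]
Because $w\in H^1_0(\Dom)$, an integration by parts rewrites $\int_\Omega e\cdot\grad w\dx=-\int_\Omega(\Div e)\,w\dx$, and Cauchy--Schwarz together with the Friedrichs inequality $\NormLzDom{w}\le C_F\NormEnergy{w}$ bounds the right-hand side by $(C_F\NormLzDom{\Div e}+\NormLzDom{r})\NormEnergy{w}$. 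Cancelling one factor $\NormEnergy{w}$ gives $\NormEnergy{w}\lesssim\NormLzDom{\Div e}+\NormLzDom{r}$. Then $\NormLzDom{e}\le\NormLzDom{r}+\NormLzDom{\sigma(\grad u)-\sigma(\grad v)}\le\NormLzDom{r}+\gamma_2\NormEnergy{w}\lesssim\NormLzDom{\Div e}+\NormLzDom{r}$, and appending the trivial bound for $\NormLzDom{\Div e}$ yields $\NormHDivDom{e}^2+\NormEnergy{w}^2\lesssim\NormLzDom{\Div e}^2+\NormLzDom{r}^2$, which is the desired estimate.

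I do not expect a real obstacle here. The only step not literally available in the text above is the strong monotonicity of $\sigma$, but it is a one-line consequence of the eigenvalue bounds on $\Grad\sigma$ via the same integral representation already used in Remark~\ref{rem:lipschitz}; the remaining tools (the $H(\Div,\Dom)$--$H^1_0(\Dom)$ duality pairing and the Friedrichs inequality) are standard. All hidden constants depend only on $\gamma_1$, $\gamma_2$ and the Friedrichs constant $C_F$ of $\Dom$, hence not on the triangulation.
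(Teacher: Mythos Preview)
Your proof is correct, but it differs from the paper's argument in the reliability direction. The paper first rewrites the residual difference as $(\Div(q-p),\,q-p-M\grad(v-u))$ with the averaged Jacobian $M(x)=\int_0^1\Grad\sigma(\grad u+s\grad(v-u))\ds$ (so your $r$ equals $e-M\grad w$), and then, following \cite[Thm.~4.4]{MR3264569}, solves an auxiliary weighted Poisson problem $\int_\Omega M\grad\alpha\cdot\grad\gamma\dx=\int_\Omega(q-p)\cdot\grad\gamma\dx$ in $H^1_0(\Dom)$ to produce a weighted Helmholtz-type splitting $q-p=M\grad\alpha+r_0$ with $\Div r_0=0$; the $L^2$-orthogonality of $\grad H^1_0(\Dom)$ and divergence-free fields then yields the bound. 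Your route is more elementary: you bypass the auxiliary PDE and the weighted decomposition entirely by inserting $\sigma(\grad u)-\sigma(\grad v)=e-r$ into the strong-monotonicity inequality and integrating by parts against $w$ to bring in $\Div e$. The monotonicity estimate you invoke is exactly \eqref{eq:convexity}, which the paper records and uses later in the proofs of Theorems~\ref{thm:a_post_unique} and~\ref{thm:best_approximation}; so all ingredients are already in the text, just assembled differently. Both arguments produce constants depending only on $\gamma_1,\gamma_2,C_\Friedrichs$; the paper's decomposition is the one that generalises verbatim to abstract first-order least-squares systems with an SPD weight, while yours is shorter for the specific $\sigma$ at hand.
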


\begin{proof}
  Following \cite[Thm.~4.4]{MR3264569}, the fundamental
  theorem of calculus shows
  \begin{align*}
    \Res(f; q,v) - \Res(f; p,u)
    &=
    \int_0^1 \frac{\mathrm{d}}{\mathrm{d} s} \Res(f; p + s( q
    -  p), u + s(v -u)) \ds\\
    &=
    \int_0^1 \Res'( p + s( q -  p), u + s(v -u);
     q -  p, v-u) \ds.
  \end{align*}
  For \(x\in\Dom\) and \(0\leq s\leq 1\), define \(F(s)\coloneq \grad u(x) + s\grad(v -u)(x)\) and
  \[
   M(x) \coloneq \int_0^1 \Big(\phi(\abs{F(s)}) I_{n\times n} +
  \phi'(\abs{F(s)}) \frac{F(s) \otimes F(s)}{\abs{F(s)}}\Big) \ds.
  \]
  Then
  \[
   \NormLzDom{\Res(f; q,v) - \Res(f; p,u)}^2 = \NormLzDom{\Div ( q- p)}^2 + \NormLzDom{ q- p-M\grad (v-u)}^2.
  \]
  Since the assumptions on $\phi$ show that \(M\in L^2(\Dom;\R^{n\times n})\)
  is pointwise symmetric and positive definite with eigenvalues in the
  real compact interval \([\gamma_1,\gamma_2]\subset (0,\infty)\), the triangle inequality shows
  \[
   \NormLzDom{\Res(f; q,v) - \Res(f; p,u)}^2 \leq
   2\max\{1,\gamma_2^2\}\big(\NormHDivDom{ q- p} + \NormEnergy{v-u}^2\big).
  \]

  For the reverse estimate, the positive definiteness of \(M\) provides the
  unique existence of a solution \(\alpha\in H^1_0(\Dom)\) to the weighted problem
  \[
    \int_\Omega M\grad\alpha \cdot \grad\gamma\dx = \int_\Omega (q - p) \cdot
    \grad\gamma\dx
    \text{ for any }\gamma\in H^1_0(\Omega).
  \]
  An integration by parts shows \(r\coloneq q-p-M\nabla \alpha\in H(\Div,\Dom)\) with \(\Div r = 0\). The Friedrichs inequality with
  constant \(C_\Friedrichs\) (i.e. \(\NormLzDom{\alpha}\leq C_\Friedrichs \vvvert \alpha\vvvert\)) implies
  \[\NormLzDom{M^{1/2}\grad\alpha}^2 =\int_\Dom
    \Div( p- q)\alpha\dx \leq C_\Friedrichs/\gamma_1
    \NormLzDom{\Div( q- p)} \NormLzDom{M^{1/2}\grad\alpha}.\]
  The orthogonality of \(\grad H^1_0(\Dom)\) and \(H(\Div,\Dom)\cap\{\Div=0\}\) in
  \(L^2(\Dom)\) shows
  \begin{align*}
    \NormLzDom{ q- p}^2 &= \NormLzDom{M\grad\alpha + r}^2
    \leq \gamma_2 \NormLzDom{M^{1/2}\grad\alpha + M^{-1/2}r}^2\\
    &= \gamma_2 \NormLzDom{M^{1/2}\grad\alpha}^2
    +\gamma_2 \NormLzDom{M^{-1/2}r}^2.
  \end{align*}
  The two previous displayed inequalities, the triangle inequality, and the abbreviation \(e\coloneq v-u\) yield
  \begin{align*}
   \NormHDivDom{ q- p}^2 + \NormEnergy{e}^2 &\leq (2+\gamma_2) \NormLzDom{M^{1/2}\grad\alpha}^2+ \NormLzDom{\Div( q- p)}^2\\
   &\quad+ \max\{2,\gamma_2\} \big(\NormLzDom{M^{1/2}\grad(\alpha-e)}^2 + \NormLzDom{M^{-1/2}r}^2\big) \\
   &\leq ((2+\gamma_2)C_\Friedrichs^2/\gamma_1^2+1)\NormLzDom{\Div( q- p)}^2 \\
   &\quad+ \max\{2,\gamma_2\}/\gamma_1 \NormLzDom{ q- p-M\grad e}^2.
   \quad\qed
  \end{align*}
\end{proof}

\begin{proof}[of Theorem~\ref{thm:normequivalence}]
 Since $\Res(f;p,u)=0$, Lemma~\ref{lem:equivalence} with $(q,v)\coloneq (q_\RT,v_C)$ shows
\[
    \NormHDivDom{ p -  q_\RT}^2
    + \NormEnergy{u - v_C}^2
    \approx
    \NormLzDom{f+\Div q_\RT}^2 + \NormLzDom{q_\RT - \sigma(\grad v_C)}^2.
  \]
The $L^2$-orthogonality of $(1-\Pi_0) q_\RT$ and $(1-\Pi_0) f$ onto piecewise constants implies
\begin{align*}
  &\Vert f+\Div q_\RT \Vert_{L^2(\Omega)}^2
  + \Vert q_\RT - \sigma(\grad v_C) \Vert_{L^2(\Omega)}^2\\
  &\qquad=
  \Vert \Pi_0 f+\Div q_\RT \Vert_{L^2(\Omega)}^2
  + \Vert (1-\Pi_0) q_\RT \Vert_{L^2(\Omega)}^2\\
  &\qquad\phantom{{}={}}
  + \Vert (1-\Pi_0) f \Vert_{L^2(\Omega)}^2
  + \Vert \Pi_0 q_\RT-\sigma(\nabla v_C) \Vert_{L^2(\Omega)}^2.
\end{align*}
The triangle inequality and the estimates of Remark~\ref{rem:properties_S0}
result in
\begin{align*}
 \NormHDivDom{p&- q_\RT}^2 + \NormEnergy{u- v_C}^2\\
 &\lesssim    \NormLzDom{\Pi_0 f+\Div q_\RT}^2  +   \NormLzDom{(1-\Pi_0) q_\RT}^2 +  \NormLzDom{(1-\Pi_0) f}^2\\
 &\quad+  \NormLzDom{H_0 f}^2  +  \NormLzDom{\Pi_0  q_\RT-\sigma(\nabla v_C)+H_0 f}^2\\
  &\lesssim    \NormLzDom{\Pi_0 f+\Div q_\RT}^2  +   \NormLzDom{(1-\Pi_0) q_\RT}^2 +  \NormLzDom{(1-\Pi_0) f}^2\\
 &\quad+  \NormLzDom{\Pi_0  q_\RT-\sigma(\nabla v_C)+H_0 f}^2.
\end{align*}
Recall that \(S_0\) is pointwise positive semi-definite, hence \(I_{n\times n}+S_0\) is positive definite and Remark~\ref{rem:properties_S0} also proves
\[
 \NormLzDom{\Pi_0  q_\RT-\sigma(\nabla v_C)+H_0 f}^2 \approx \NormLzDom{(I_{n\times n}+S_0)^{-1/2}(\Pi_0  q_\RT-\sigma(\nabla v_C)+H_0 f)}^2.
\]

The proof of the converse estimate utilizes the last estimate and the triangle inequality to show
\begin{align*}
 \NormLzDom{(I_{n\times n}+S_0&)^{-1/2}(\Pi_0  q_\RT-\sigma(\nabla v_C)+H_0 f)}^2 +  \NormLzDom{(1-\Pi_0) f}^2\\
&\lesssim \NormLzDom{\Pi_0  q_\RT-\sigma(\nabla v_C)}^2  +\NormLzDom{H_0 f}^2 +\NormLzDom{(1-\Pi_0) f}^2\\
&\lesssim \NormLzDom{\Pi_0  q_\RT-\sigma(\nabla v_C)}^2 +\NormLzDom{(1-\Pi_0) f}^2.
\end{align*}
This and the aforementioned orthogonalities imply
\begin{align*}
 \NormLzDom{(I_{n\times n}+S_0&)^{-1/2}(\Pi_0  q_\RT-\sigma(\nabla v_C)+H_0 f)}^2 +  \NormLzDom{(1-\Pi_0) f}^2\\
&\quad + \NormLzDom{(1-\Pi_0) q_\RT}^2 + \NormLzDom{\Pi_0 f+\Div q_\RT}^2 \\
&\lesssim \NormLzDom{ q_\RT-\sigma(\nabla v_C)}^2 + \NormLzDom{f+\Div q_\RT}^2\\
&\lesssim \NormHDivDom{p- q_\RT}^2 + \NormEnergy{u-v_C}^2.
\quad\qed
\end{align*}
\end{proof}

\subsection{Existence and uniqueness of discrete solutions}
The existence of discrete solutions follows from variational arguments, while their uniqueness is fairly open.
\begin{proposition}
  \label{prop:discrete_existence}
  The discrete problem~\eqref{eq:model_dpg} has a solution.
\end{proposition}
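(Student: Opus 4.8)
The plan is to run the direct method of the calculus of variations for the objective $\Phi(\xi_h)\coloneq\Vert F-B(\xi_h)\Vert_{Y_h^*}$ on the \emph{finite-dimensional} space $X_h = S^1_0(\Tri)\times P_0(\Edges)$. Since $\Phi\ge 0$ and $\Phi(0)=\Vert F\Vert_{Y_h^*}$ (because $\sigma(0)=0$ forces $B(0)=0$), it suffices to prove that $\Phi$ is continuous and that the sublevel set $\{\xi_h\in X_h : \Phi(\xi_h)\le\Vert F\Vert_{Y_h^*}\}$ is bounded; being closed and bounded in a finite-dimensional space it is then compact, $\Phi$ attains its minimum there, and that minimum is global. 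Continuity is immediate: $t_0\mapsto b(u_C,t_0;\pbullet)$ is linear and bounded, and $u_C\mapsto\sigma(\gradNC u_C)$ is Lipschitz into $L^2(\Omega)$ by the global Lipschitz continuity of $\sigma$ from Remark~\ref{rem:lipschitz}, so $B\colon X_h\to Y_h^*$ is Lipschitz and hence so is $\Phi$.

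The real work is the coercivity bound $\Vert\xi_h\Vert_{X_h}\lesssim\Phi(\xi_h)+1$. I would first identify $\xi_h=(u_C,t_0)$ with $(u_C,p_\RT)\in S^1_0(\Tri)\times RT_0(\Tri)$ via Remark~\ref{rem:identification_RT_P0Edges}, so that $\Vert\xi_h\Vert_{X_h}\approx\NormEnergy{u_C}+\NormHDivDom{p_\RT}$ on the fixed mesh. To control $\NormEnergy{u_C}$, test the residual $r\coloneq F-B(u_C,p_\RT)\in Y_h^*$ against $u_C$ itself --- legitimate because $S^1_0(\Tri)\subset P_1(\Tri)=Y_h$. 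For the continuous function $u_C$ with $u_C\vert_{\BdrDom}=0$ and $p_\RT\in H(\Div,\Omega)$, a piecewise integration by parts and the divergence theorem collapse $\langle p_\RT\cdot\nu,u_C\rangle_{\partial\Tri}$ to $\int_\Omega(\Div p_\RT)\,u_C+\int_\Omega p_\RT\cdot\grad u_C=\int_{\BdrDom}(p_\RT\cdot\nu)\,u_C=0$; hence $r(u_C)=\int_\Omega f u_C-\int_\Omega\phi(\vert\grad u_C\vert)\,\vert\grad u_C\vert^2\le C_\Friedrichs\NormLzDom{f}\,\NormEnergy{u_C}-\gamma_1\NormEnergy{u_C}^2$ by the Friedrichs inequality and $\phi\ge\gamma_1$. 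Dividing by $\Vert u_C\Vert_{H^1(\Tri)}\le(1+C_\Friedrichs^2)^{1/2}\NormEnergy{u_C}$ gives $\Phi(\xi_h)\ge\vert r(u_C)\vert/\Vert u_C\Vert_{H^1(\Tri)}\gtrsim\NormEnergy{u_C}-1$ as soon as $\NormEnergy{u_C}>C_\Friedrichs\NormLzDom{f}/\gamma_1$.

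For the $p_\RT$-part I would invoke the least-squares identity of Theorem~\ref{thm:dpg_as_ls}, which together with the spectral bounds on $I_{n\times n}+S_0$ from Remark~\ref{rem:properties_S0} and the triangle inequality (with $H_0 f$ and $\Pi_0 f$ fixed) yields $\Phi(\xi_h)\gtrsim\NormLzDom{\Pi_0 p_\RT-\sigma(\grad u_C)}-1$ and $\Phi(\xi_h)\ge\NormLzDom{\Div p_\RT}-\NormLzDom{f}$. Then $\NormLzDom{\Pi_0 p_\RT}\le\NormLzDom{\Pi_0 p_\RT-\sigma(\grad u_C)}+\gamma_2\NormEnergy{u_C}\lesssim\Phi(\xi_h)+1$ by the previous step, and on the fixed mesh $\NormHDivDom{p_\RT}^2\lesssim\NormLzDom{\Pi_0 p_\RT}^2+\NormLzDom{\Div p_\RT}^2$ because the elementwise remainder $(1-\Pi_0)p_\RT\vert_T = b_T(\pbullet-\Mid(T))$ satisfies $\Div p_\RT\vert_T = n\,b_T$ and hence $\NormLzDom{(1-\Pi_0)p_\RT}\lesssim h_{\textup{max}}\NormLzDom{\Div p_\RT}$. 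Combining the two steps gives $\Vert\xi_h\Vert_{X_h}\lesssim\Phi(\xi_h)+1$, i.e.\ coercivity, and the direct method sketched above then produces a global minimizer, which by definition solves~\eqref{eq:model_dpg}.

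The main obstacle is precisely this coercivity: in contrast to the linear theory the inf-sup constant of $B$ is not available, and the least-squares identity of Theorem~\ref{thm:dpg_as_ls} alone controls only $\Pi_0 p_\RT-\sigma(\grad u_C)$ and $\Div p_\RT$, which leaves $u_C$ (and with it $p_\RT$) unbounded. The point that unlocks the estimate is that the conforming space $S^1_0(\Tri)$ sits inside the test-search space $Y_h=P_1(\Tri)$, so the Euler--Lagrange residual is visible to $\Phi$ and its strong monotonicity $\int_\Omega\sigma(\grad u_C)\cdot\grad u_C\ge\gamma_1\NormEnergy{u_C}^2$ can be exploited. No weak-compactness machinery is needed: on the finite-dimensional $X_h$ the continuity of $\Phi$ already supplies the lower semicontinuity required by the direct method.
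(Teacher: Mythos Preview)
Your proof is correct and, like the paper, runs the direct method in finite dimensions after verifying coercivity of $\Phi$; the difference lies in \emph{how} you obtain that coercivity. The paper invokes Theorem~\ref{thm:normequivalence} (the full residual--error equivalence, whose proof goes through Lemma~\ref{lem:equivalence} and a weighted auxiliary problem) to bound $\Vert x-\xi_h\Vert_X$ by $\Vert F-B(\xi_h)\Vert_{Y_h^*}$ plus data terms, and then only needs Theorem~\ref{thm:dpg_as_ls} to absorb the oscillation $\Vert(1-\Pi_0)q_\RT\Vert$. You bypass Theorem~\ref{thm:normequivalence} entirely: you test the residual with $u_C\in S^1_0(\Tri)\subset Y_h$, observe that the boundary pairing $\langle p_\RT\cdot\nu,u_C\rangle_{\partial\Tri}$ vanishes, and read off $\NormEnergy{u_C}\lesssim\Phi(\xi_h)+1$ from the strong monotonicity $\sigma(A)\cdot A\ge\gamma_1\vert A\vert^2$; only then do you use the least-squares identity to control $p_\RT$. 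Your route is more elementary and self-contained (it needs neither the exact solution $x$ nor Lemma~\ref{lem:equivalence}), while the paper's route reuses a structural result that it needs anyway for the a~posteriori analysis and treats $u_C$ and $p_\RT$ in one stroke.
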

\begin{proof}
  The proof follows with the direct method in the calculus of variations
  and, in the present case of finite dimensions, from the global minimum of
  a continuous \changed{functional} on a compact set from the growth condition
  \begin{equation}
    \label{eq:growthcondition}
    \lim_{\Vert \xi_h \Vert_X \rightarrow\infty} \Vert F-B\xi_h
    \Vert_{Y_h^*} = \infty.
  \end{equation}
The latter property follows from Theorem~\ref{thm:normequivalence} up to
some \changed{perturbation} terms. Theorem~\ref{thm:dpg_as_ls} shows
\begin{align*}
 \NormLzDom{(1-\Pi_0)q_\RT} &\leq \NormLzDom{h_\Tri \Pi_0 f} + h_{\textup{max}}\NormLzDom{\Pi_0 f + \Div q_\RT}\\
 &\leq  \NormLzDom{h_\Tri \Pi_0 f} + h_{\textup{max}}\Vert F-B\xi_h \Vert_{Y_h^*}.
\end{align*}
The combination with Theorem~\ref{thm:normequivalence} shows that the right-hand side of Theorem~\ref{thm:normequivalence} is bounded from above by
\begin{align*}
 &(1+2h_{\textup{max}}^2)  \Vert F-B\xi_h \Vert_{Y_h^*}^2 + 2 h_{\textup{max}}^2
 \NormLzDom{\Pi_0 f}^2 + \NormLzDom{(1-\Pi_0) f}^2\\
 &\qquad\leq  (1+2h_{\textup{max}}^2)  (\Vert F-B\xi_h \Vert_{Y_h^*} + \NormLzDom{f}).
\end{align*}
Hence the left-hand side in Theorem~\ref{thm:normequivalence} is controlled by this and so
\[
 \Vert x-\xi_h\Vert_X \lesssim \Vert F-B\xi_h \Vert_{Y_h^*} + \NormLzDom{f}.
\]
Since \(f\) and \(x\) are fixed, this implies~\eqref{eq:growthcondition} and
concludes the proof.
  \qed
\end{proof}

The uniqueness of the exact solution \((u,t)\) on the continuous level does
not imply the uniqueness of discrete solutions. There is, however, a
sufficient condition for a global unique discrete solution. Notice that
\(v_h=v=0\) on the continuous level \(h=0\) satisfies \eqref{eq:uniqueness_criterion}.
\begin{theorem}[a~posteriori uniqueness]
\label{thm:a_post_unique}
 Suppose that \((u_h,v_h)\in S^1_0(\Tri)\times CR^1_0(\Tri)\) solves \eqref{eq:model_reduced} with \(\Grad\sigma\in C(\R^n;\R^{n\times n}_{\operatorname{sym}})\) globally Lipschitz continuous and
 \begin{equation}
  \label{eq:uniqueness_criterion}
  \operatorname{Lip}(\Grad\sigma)(1+C_\Friedrichs^2)/\gamma_1^2\NormL{\gradNC
  v_h}{\infty}{\Dom} <1
 \end{equation}
with the Friedrichs constant \(C_\Friedrichs\) from \(\NormLzDom{\pbullet}\leq C_\Friedrichs \vvvert \pbullet\vvvert\) in \(H^1_0(\Dom)\).
Then \eqref{eq:model_reduced} has exactly one solution \((u_h,v_h)\in S^1_0(\Tri)\times CR^1_0(\Tri)\).
\end{theorem}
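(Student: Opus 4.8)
The plan is to compare an arbitrary solution $(\tilde u_h,\tilde v_h)\in S^1_0(\Tri)\times CR^1_0(\Tri)$ of \eqref{eq:model_reduced} with the given one $(u_h,v_h)$ satisfying \eqref{eq:uniqueness_criterion} and to show that $e_u\coloneq u_h-\tilde u_h$ and $e_v\coloneq v_h-\tilde v_h$ vanish. The structural fact I would exploit is $S^1_0(\Tri)\subseteq CR^1_0(\Tri)$, which makes $e_u$ an admissible test function in the first equation of \eqref{eq:model_reduced}. First I would subtract the two instances of each equation to get, for all $w_\CR\in CR^1_0(\Tri)$,
\begin{equation*}
  a(e_v,w_\CR)+\int_\Omega(\sigma(\grad u_h)-\sigma(\grad\tilde u_h))\cdot\gradNC w_\CR\dx=0,
\end{equation*}
and, for all $w_\Conf\in S^1_0(\Tri)$,
\begin{equation*}
  \int_\Omega\grad w_\Conf\cdot\big(\Grad\sigma(\grad u_h)\gradNC v_h-\Grad\sigma(\grad\tilde u_h)\gradNC\tilde v_h\big)\dx=0;
\end{equation*}
testing the second equation of \emph{each} solution separately with $w_\Conf=e_u$ also produces the two orthogonalities $\int_\Omega\Grad\sigma(\grad u_h)\grad e_u\cdot\gradNC v_h\dx=0$ and $\int_\Omega\Grad\sigma(\grad\tilde u_h)\grad e_u\cdot\gradNC\tilde v_h\dx=0$.

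The first step bounds $\vvvert e_u\vvvert$ by $\Vert e_v\Vert_{H^1(\Tri)}$: I would test the subtracted first equation with $w_\CR=e_u$ and use that $\Grad\sigma$ has eigenvalues $\ge\gamma_1$ (strict monotonicity, $\int_\Omega(\sigma(\grad u_h)-\sigma(\grad\tilde u_h))\cdot\grad e_u\dx\ge\gamma_1\vvvert e_u\vvvert^2$) together with $\vert a(e_v,e_u)\vert\le\Vert e_v\Vert_{H^1(\Tri)}\Vert e_u\Vert_{H^1(\Tri)}$ and the Friedrichs inequality $\Vert e_u\Vert_{H^1(\Tri)}^2=\Vert e_u\Vert_{L^2(\Omega)}^2+\vvvert e_u\vvvert^2\le(1+C_\Friedrichs^2)\vvvert e_u\vvvert^2$, which yields $\gamma_1\vvvert e_u\vvvert\le\sqrt{1+C_\Friedrichs^2}\,\Vert e_v\Vert_{H^1(\Tri)}$. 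The second step shows $\Vert e_v\Vert_{H^1(\Tri)}$ is of higher order: testing the subtracted first equation with $w_\CR=e_v$ gives $\Vert e_v\Vert_{H^1(\Tri)}^2=a(e_v,e_v)=-\int_\Omega(\sigma(\grad u_h)-\sigma(\grad\tilde u_h))\cdot\gradNC e_v\dx$; then I would split $\gradNC e_v=\gradNC v_h-\gradNC\tilde v_h$, insert the two orthogonalities, and use Taylor's formula $\sigma(\grad u_h)-\sigma(\grad\tilde u_h)=\int_0^1\Grad\sigma(\grad\tilde u_h+s\grad e_u)\grad e_u\ds$ so that each remaining integrand is a difference $\Grad\sigma(\cdot)-\Grad\sigma(\grad u_h)$ (respectively $\Grad\sigma(\cdot)-\Grad\sigma(\grad\tilde u_h)$) bounded pointwise by $\operatorname{Lip}(\Grad\sigma)\,\vert\grad e_u\vert$ times $\vert\grad e_u\vert$. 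Since $\grad e_u=\gradNC e_u$ is piecewise constant, this bounds the integrals by $\operatorname{Lip}(\Grad\sigma)\,\Vert\gradNC v_h\Vert_{L^\infty(\Omega)}\,\vvvert e_u\vvvert^2$ (up to the remainder below). Combining the two steps gives $\vvvert e_u\vvvert^2\le\operatorname{Lip}(\Grad\sigma)(1+C_\Friedrichs^2)\gamma_1^{-2}\Vert\gradNC v_h\Vert_{L^\infty(\Omega)}\vvvert e_u\vvvert^2$, so \eqref{eq:uniqueness_criterion} forces $\vvvert e_u\vvvert=0$, whence $e_u=0$ (homogeneous boundary values) and then $e_v=0$ from the first step.

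The hard part will be the remainder in the second step. The orthogonalities only absorb the terms $\Grad\sigma(\grad u_h)\gradNC v_h$ and $\Grad\sigma(\grad\tilde u_h)\gradNC\tilde v_h$, whereas Taylor's formula produces the mean-value matrix $\int_0^1\Grad\sigma(\grad\tilde u_h+s\grad e_u)\ds$, so one integral is left carrying the factor $\gradNC\tilde v_h$ of the \emph{second} solution — equivalently, a term of the form $\int_\Omega\vert\grad e_u\vert^2\vert\gradNC e_v\vert\dx$. The argument is only complete once this term is shown to be absorbable against $\Vert e_v\Vert_{H^1(\Tri)}^2$, using that it is quadratic in $\grad e_u$ together with the auxiliary bound $\Vert e_v\Vert_{H^1(\Tri)}\le\gamma_2\vvvert e_u\vvvert$ (from the Lipschitz continuity of $\sigma$), so that the mesh-independent constant of \eqref{eq:uniqueness_criterion} survives; everything else is a routine combination of monotonicity and Lipschitz bounds on $\sigma$ and $\Grad\sigma$, the discrete Friedrichs inequality, and the inclusion $S^1_0(\Tri)\subseteq CR^1_0(\Tri)$.
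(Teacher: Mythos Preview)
Your first step---testing the subtracted first equation with $w_\CR=e_u\in S^1_0(\Tri)\subset CR^1_0(\Tri)$ and using the monotonicity of $\sigma$ together with the Friedrichs inequality to obtain $\gamma_1\vvvert e_u\vvvert\le\sqrt{1+C_\Friedrichs^2}\,\Vert e_v\Vert_{H^1(\Tri)}$---is correct and is exactly what the paper does.

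The second step, however, has a genuine gap that your proposed fix does not close. After splitting $\gradNC e_v=\gradNC v_h-\gradNC\tilde v_h$ and inserting the two orthogonalities you are left with
\[
  \Vert e_v\Vert_{H^1(\Tri)}^2
  \le \operatorname{Lip}(\Grad\sigma)\,\Vert\gradNC v_h\Vert_{L^\infty(\Omega)}\,\vvvert e_u\vvvert^2
  + \tfrac12\operatorname{Lip}(\Grad\sigma)\int_\Omega|\grad e_u|^2\,|\gradNC e_v|\dx,
\]
and the last integral cannot be absorbed using only the $L^2$-type bound $\Vert e_v\Vert_{H^1(\Tri)}\le\gamma_2\vvvert e_u\vvvert$: it is a product of three $L^2$ functions and needs an $L^\infty$ bound on one factor. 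Any such bound on $\grad e_u$ or $\gradNC e_v$ is either mesh-dependent (via an inverse inequality) or involves the unknown second solution, so the mesh-independent criterion \eqref{eq:uniqueness_criterion} does not survive.

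The paper avoids this remainder altogether by a structural observation you did not use: both $(u_h,v_h)$ and $(\tilde u_h,\tilde v_h)$ correspond to discrete minimizers of the same residual functional, so $\Vert v_h\Vert_{Y_h}=\Vert\tilde v_h\Vert_{Y_h}$ and hence $a(e_v,v_h+\tilde v_h)=0$. This converts $a(e_v,v_h)$ directly into $\tfrac12\Vert e_v\Vert_{Y_h}^2$, and only one orthogonality (the one at $u_h$, tested against $\gradNC v_h$) is needed. The resulting bound
\[
  \Vert e_v\Vert_{Y_h}^2=2\,a(e_v,v_h)\le\operatorname{Lip}(\Grad\sigma)\,\Vert\gradNC v_h\Vert_{L^\infty(\Omega)}\,\vvvert e_u\vvvert^2
\]
contains only the known $v_h$, and combining it with your first step yields \eqref{eq:uniqueness_criterion} cleanly.
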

\begin{proof}
 Suppose that \((\widetilde u_h,\widetilde v_h)\in S^1_0(\Tri)\times CR^1_0(\Tri)\) solves \eqref{eq:model_reduced} as well and so
 \begin{align*}
  a(v_h,v_h) &= F(v_h) - \int_\Dom \sigma(\nabla u_h)\cdot \gradNC v_h \dx\\
  &= a(\widetilde v_h, v_h) + \int_\Dom (\sigma(\nabla \widetilde u_h) - \sigma(\nabla u_h))\cdot \gradNC v_h \dx.
 \end{align*}
This \changed{and} the second equation of \eqref{eq:model_reduced} imply
\begin{align*}
  a(v_h-\widetilde v_h,v_h) = \int_\Dom \gradNC v_h \cdot (\sigma(\nabla
    \widetilde u_h) - \sigma(\nabla u_h) - \Grad\sigma(\nabla
  u_h)\nabla(\widetilde u_h - u_h) \changed{)}\dx.
\end{align*}
 Since \(\sigma\in C^1(\R^n)\) is bounded and \(\Grad\sigma\) Lipschitz continuous, any \(A,B\in\R^n\) with \(F(s)\coloneq (1-s)A+sB\) for \(0\leq s\leq 1\) satisfy
 \begin{align*}
  \vert \sigma(B)&-\sigma(A) - \Grad\sigma(A)(B-A)\vert = \vert \int_0^1 (\Grad\sigma(F(s))- \Grad\sigma(A))(B-A)\ds\vert\\
  &\leq \operatorname{Lip}(\Grad\sigma) \vert B-A\vert \int_0^1 \vert F(s) - A\vert \ds= \frac12 \operatorname{Lip}(\Grad\sigma)\vert B-A\vert^2.
 \end{align*}
With \(A=\nabla u_h(x)\) and \(B=\nabla\widetilde u_h(x)\) for a.e. \(x\) and an integration over \(\Dom\), this leads in the preceding identity to
\begin{equation}\label{eq:estimate_a_vh}
 a(v_h-\widetilde v_h,v_h) \leq \frac12\operatorname{Lip}(\Grad\sigma)\NormL{\gradNC v_h}{\infty}{\Dom} \vvvert u_h-\widetilde u_h\vvvert^2.
\end{equation}
The discrete solutions of \eqref{eq:model_reduced} lead to the same minimal discrete residual norm and hence
\[
 \Vert v_h\Vert_{Y_h} = \Vert F- b(u_h,t_h;\pbullet)\Vert_{Y_h^*} = \Vert F- b(\widetilde u_h,\widetilde t_h;\pbullet)\Vert_{Y_h^*} =\Vert\widetilde v_h\Vert_{Y_h}.
\]
This shows \(a(v_h - \widetilde v_h, v_h + \widetilde v_h)=0\) and the combination with \eqref{eq:estimate_a_vh} is
\begin{equation}\label{eq:estimate_error_vh}
\begin{aligned}
 \Vert v_h - \widetilde v_h \Vert_{Y_h}^2 &= 2a(v_h - \widetilde v_h, v_h) - a(v_h - \widetilde v_h, v_h + \widetilde v_h)\\
 &\leq \operatorname{Lip}(\Grad\sigma)\NormL{\gradNC v_h}{\infty}{\Dom}\vvvert u_h - \widetilde u_h\vvvert^2.
 \end{aligned}
\end{equation}
On the other hand, \(\Grad \sigma(A) \in \R^{n\times n}_\sym\)
  has eigenvalues in the compact interval \([\gamma_1, \gamma_2] \subset
  (0, \infty)\) and so, for all \(A, B \in \R^n\),
  \begin{equation}
    \label{eq:convexity}
    \begin{split}
      \gamma_1 \abs{A-B}^2
      \leq \int_0^1 (A-B) \cdot \Grad\sigma(B + s(A-B)) (A-B) \ds\\
      = (\sigma(A) - \sigma(B))\cdot (A-B)
      \leq \gamma_2 \abs{A-B}^2.
    \end{split}
  \end{equation}
With \(A=\nabla u_h(x)\) and \(B=\nabla \widetilde u_h(x)\), and an integration over a.e. \(x\in\Dom\), this shows
\[
 \gamma_1 \vvvert u_h - \widetilde u_h \vvvert^2 \leq \int_\Dom (\sigma(\nabla u_h)-\sigma(\nabla\widetilde u_h))\cdot \nabla(u_h-\widetilde u_h) \dx.
\]
The first identity in \eqref{eq:model_reduced} for \((u_h,v_h)\) and \((\widetilde u_h,\widetilde v_h)\), respectively, results in
\begin{align*}
 \int_\Dom (\sigma(\nabla u_h)-\sigma(\nabla\widetilde u_h))\cdot \nabla(u_h-\widetilde u_h) \dx &= a(\widetilde v_h - v_h, u_h - \widetilde u_h)\\
 &\leq \Vert v_h - \widetilde v_h\Vert_{Y_h} \sqrt{1+C_\Friedrichs^2} \vvvert u_h - \widetilde u_h \vvvert.
\end{align*}
The combination with the previous inequality shows
\begin{equation}
\label{eq:error_u}
 \gamma_1 \vvvert u_h - \widetilde u_h\vvvert \leq \sqrt{1+C_\Friedrichs^2} \Vert v_h - \widetilde v_h\Vert_{Y_h}.
\end{equation}
The substitution in \eqref{eq:estimate_error_vh} results in
\[
 \Vert v_h - \widetilde v_h\Vert_{Y_h}^2 \leq \operatorname{Lip}(\Grad\sigma)(1+C_\Friedrichs^2)/\gamma_1^2\NormL{\gradNC v_h}{\infty}{\Dom}  \Vert v_h - \widetilde v_h\Vert_{Y_h}^2.
\]
This and \eqref{eq:uniqueness_criterion} show \(v_h = \widetilde v_h\). Then \eqref{eq:error_u} implies \(u_h = \widetilde u_h\).
  \qed
\end{proof}

\subsection{Best-approximation}
For any \(v\in H^1(\Tri)\), the \emph{nonconforming interpolation}
\(I_\NC^\loc v \in P_1(\Tri)\) is defined, on each triangle \(T\in\Tri\),
by piecewise linear interpolation of the values
\begin{equation}
  \label{eq:nonconf_interpolation}
  (I_\NC^\loc v)(\Mid(E)) \coloneq \intmean_E v\restrict{T} \ds
\end{equation}
 at the midpoints of the sides \(E\in\Edges(T)\).
\begin{proposition}
  \label{prop:fortin_model}
  The operator \(\Pi \coloneq I_\NC^\loc\) satisfies
  Hypothesis~\ref{hyp:fortin}.
\end{proposition}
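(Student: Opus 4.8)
The plan is to verify, one after another, the three requirements that Hypothesis~\ref{hyp:fortin} imposes on \(\Pi_h \coloneq \Pi = I_\NC^\loc\): that it maps \(Y = H^1(\Tri)\) into \(Y_h = P_1(\Tri)\) and restricts to the identity there, that it is a bounded linear operator on \(Y\), and that it satisfies the Fortin orthogonality \eqref{eq:Fortin} for the derivative \(b'\) from \eqref{eq:model_derivative_b}. Everything will rest on two elementary properties of the local nonconforming interpolation. First, \(I_\NC^\loc\) preserves the integral mean over every side: since \((I_\NC^\loc v)\restrict{T}\) is affine, \eqref{eq:nonconf_interpolation} gives \(\intmean_E (I_\NC^\loc v)\restrict{T}\ds = (I_\NC^\loc v)\restrict{T}(\Mid(E)) = \intmean_E v\restrict{T}\ds\) for all \(T\in\Tri\) and \(E\in\Edges(T)\). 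Second, and in consequence, the divergence theorem on each \(T\in\Tri\) yields \(\int_T \gradNC(v - I_\NC^\loc v)\dx = \int_{\partial T}(v - I_\NC^\loc v)\restrict{T}\,\normal_T\ds = 0\), because the side integrals of \(v - I_\NC^\loc v\) vanish.

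First I would dispose of the projection and boundedness properties. Linearity of \(I_\NC^\loc\) is clear from \eqref{eq:nonconf_interpolation}; the identity \(I_\NC^\loc v = v\) for \(v\in P_1(\Tri)\) holds because an affine function on each \(T\) is recovered from its side-midpoint values, so \(\Pi\) is a projection onto \(P_1(\Tri)\subset Y\). Boundedness on \(Y = H^1(\Tri)\) follows elementwise from a scaled trace inequality, which bounds the side average \(\intmean_E v\restrict{T}\ds\) in terms of \(\Norm{v}{H^1(T)}\) up to a shape-regularity factor, combined with the standard scaling of the \(H^1(T)\)-norm of the affine function with prescribed side-midpoint values; this produces a finite bound \(\Vert \Pi_h \Vert\) in \(L(Y;Y)\) depending solely on \(\Tri_0\).

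The main step is the Fortin identity \eqref{eq:Fortin}. Fix \(x_h = (u_h,t_h)\in D_h\) (so \(u_h\in S^1_0(\Tri)\)), \(\xi_h = (w_h,s_h)\in X_h\) (so \(w_h\in S^1_0(\Tri)\) and \(s_h\in P_0(\Edges)\)), and \(v\in Y\); set \(e\coloneq v - \Pi v \in H^1(\Tri)\). By \eqref{eq:model_derivative_b},
\[
  b'(u_h,t_h; w_h,s_h,e) = \int_\Omega \grad w_h\cdot\big(\Grad\sigma(\grad u_h)\,\gradNC e\big)\dx - \langle s_h, e\rangle_{\partial\Tri}.
\]
For the volume term I would use that \(\grad u_h\) and \(\grad w_h\) are piecewise constant (as \(u_h,w_h\) are piecewise affine) and that \(\Grad\sigma\) acts pointwise with \(\Grad\sigma(A)\in\R^{n\times n}_\sym\), so that on each \(T\in\Tri\) the vector \(c_T \coloneq \Grad\sigma(\grad u_h\restrict{T})\,\grad w_h\restrict{T}\) is constant; hence \(\int_T \grad w_h\cdot(\Grad\sigma(\grad u_h)\gradNC e)\dx = c_T\cdot\int_T\gradNC e\dx = 0\) by the second property above, and summation over \(T\in\Tri\) annihilates the volume term. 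For the boundary term I would invoke the identification of \(P_0(\Edges)\) in \(H^{-1/2}(\partial\Tri)\) from Remark~\ref{rem:identification_RT_P0Edges}: \(s_h\) is edgewise constant and pairs with \(e\in H^1(\Tri)\) as the sign-corrected edgewise \(L^2\)-product, so \(\langle s_h,e\rangle_{\partial\Tri} = \sum_{T\in\Tri}\sum_{E\in\Edges(T)}(\pm s_h\restrict{E})\,\vert E\vert\,\intmean_E e\restrict{T}\ds\), and each average \(\intmean_E e\restrict{T}\ds = \intmean_E v\restrict{T}\ds - \intmean_E(I_\NC^\loc v)\restrict{T}\ds\) vanishes by the first property above. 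Therefore \(b'(u_h,t_h; w_h,s_h, v - \Pi v) = 0\) for all \((w_h,s_h)\in X_h\), which is \eqref{eq:Fortin}; since the argument used nothing about \((u_h,t_h)\) beyond \(\grad u_h\) being piecewise constant, the same \(\Pi_h\) serves uniformly for all \(x_h\in D_h\). There is no genuine obstacle here; the only points that need care are the pointwise piecewise-constancy of \(\Grad\sigma(\grad u_h)\) and the correct reading of \(\langle\pbullet,\pbullet\rangle_{\partial\Tri}\) on \(P_0(\Edges)\times P_1(\Tri)\) as an edgewise \(L^2\)-pairing with the globally fixed normal orientations.
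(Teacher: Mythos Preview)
Your proof is correct and follows essentially the same approach as the paper: both establish that \(\gradNC I_\NC^\loc v\) equals the piecewise mean of \(\gradNC v\) via the divergence theorem on each element, combine this with the piecewise constancy of \(\Grad\sigma(\grad u_h)\,\grad w_h\) to kill the volume term, and use the side-mean preservation of \(I_\NC^\loc\) to annihilate the skeleton pairing \(\langle s_h,\,\pbullet\,\rangle_{\partial\Tri}\). You are more explicit than the paper in verifying the projection and boundedness properties, which the paper leaves implicit as standard.
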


\begin{proof}
  Given \(v \in H^1(\Tri)\), set \(v_h \coloneq I_\NC^\loc v \in
  P_1(\Tri)\). For every \(K\in\Tri\), an integration by parts leads to
  \[
    \grad \big(v_h\restrict{K}\big) = \int_{\partial K} v_h \cdot \nu_K\ds = \int_{\partial K} v\cdot \nu_K \ds=
    \int_K \grad v \dx \Big/ \abs{K}.
  \]
  Since \(\grad w_C\in P_0(\Tri;\R^n)\) and \eqref{eq:dsigma} shows \(\Grad \sigma(\grad u_C) \in P_0(\Tri;\R^{n\times n})\) for all \(w_C,u_C
  \in S^1_0(\Tri)\), this implies
  \[
  \int_\Omega \grad w_C \cdot (\Grad \sigma(\grad u_C) \gradNC (v-v_h)) \dx=0.
  \]
  Moreover, \eqref{eq:nonconf_interpolation} guarantees that any \(s_0\in P_0(\Edges)\) satisfies
  \[
    \dual{s_0, v-v_h}_{\partial\Tri} = 0.
  \]
  Consequently, any \(x_h=(u_C,t_0)\in X_h\) and \(\xi_h=(w_C,s_0)\in X_h\) satisfy
  \begin{align*}
   b'(x_h; \xi_h,v-v_h) &= \int_\Omega \grad w_C \cdot (\Grad \sigma(\grad u_C) \gradNC (v-v_h)) \dx - \dual{s_0, v-v_h}_{\partial\Tri}= 0.
   \quad\qed
  \end{align*}
\end{proof}

The estimates for the function \(\Grad\sigma\) from
Subsection~\ref{sec:convex_energy_minimization} lead to an explicit generic constant for the
best-approximation estimate from
Theorem~\ref{thm:ba} without any local hypothesis.

\begin{theorem}[best-approximation]
  \label{thm:best_approximation}
  Let \(x = (u,t) \in X\) be the unique solution to
\(B(x) = F\) for the nonlinear map \(B\) from \eqref{eq:def_b} in Section~\ref{sec:dpg_discretization}.
  Any discrete solution \((u_h,t_h;v_h) \in X_h\times Y_h\) to~\eqref{eq:model_dpg}
  satisfies
  \begin{align*}
    \vvvert u - u_h \vvvert + \Vert v_h \Vert_Y
    &\lesssim
    \inf_{u_\Conf \in S^1_0(\Tri)} \vvvert u - u_\Conf \vvvert+ \inf_{t_0 \in P_0(\Edges)}
    \Vert t - t_0 \Vert_{H^{-1/2}(\partial \Tri)}.
  \end{align*}
\end{theorem}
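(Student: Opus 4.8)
\emph{Strategy.} Abbreviate the two best-approximation errors $E_u\coloneq \inf_{u_\Conf\in S^1_0(\Tri)}\NormEnergy{u-u_\Conf}$ and $E_t\coloneq \inf_{t_0\in P_0(\Edges)}\Vert t-t_0\Vert_{H^{-1/2}(\partial\Tri)}$. By Theorem~\ref{thm:mixed_formulation} and Theorem~\ref{thm:reduced_formulation}, a discrete solution comes with the residual's Riesz representation $v_h\in Y_h$ so that $(u_h,t_h,v_h)$ solves~\eqref{eq:model_mixed}, $(u_h,v_h)$ solves~\eqref{eq:model_reduced}, and $\Vert v_h\Vert_Y=\Norm{F-b(u_h,t_h;\pbullet)}{Y_h^*}$. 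The plan is to bound $\Vert v_h\Vert_Y$ and $\NormEnergy{u-u_h}$ separately. The abstract route through Theorem~\ref{thm:ba} is only local, because the Taylor remainder of $B$ is controlled merely for $\Vert x-x_h\Vert_X<\varepsilon$; for the model problem, however, the uniform spectral bounds on $\Grad\sigma$, equivalently the global strong monotonicity~\eqref{eq:convexity} together with $\operatorname{Lip}(\sigma)\le\gamma_2$ from Remark~\ref{rem:lipschitz}, let the same strategy be run globally with explicit constants.

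\emph{Residual bound.} The dPG minimality~\eqref{eq:model_dpg} gives $\Vert v_h\Vert_Y\le\Norm{F-b(v_\Conf,t_0;\pbullet)}{Y_h^*}$ for every $(v_\Conf,t_0)\in X_h$. Since $B(x)=F$, every $\eta\in Y$ satisfies $F(\eta)-b(v_\Conf,t_0;\eta)=\int_\Omega(\sigma(\grad u)-\sigma(\grad v_\Conf))\cdot\gradNC\eta\dx-\dual{t-t_0,\eta}_{\partial\Tri}$. Using $\operatorname{Lip}(\sigma)\le\gamma_2$ and the boundedness of $b$ in its second component, $\vert\dual{s,\eta}_{\partial\Tri}\vert\le\Vert s\Vert_{H^{-1/2}(\partial\Tri)}\Vert\eta\Vert_Y$, this is bounded by $(\gamma_2\NormEnergy{u-v_\Conf}+\Vert t-t_0\Vert_{H^{-1/2}(\partial\Tri)})\Vert\eta\Vert_Y$, hence $\Norm{F-b(v_\Conf,t_0;\pbullet)}{Y_h^*}\le\gamma_2\NormEnergy{u-v_\Conf}+\Vert t-t_0\Vert_{H^{-1/2}(\partial\Tri)}$. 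Taking the infimum over $X_h$ yields $\Vert v_h\Vert_Y\le\gamma_2 E_u+E_t$.

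\emph{Primal error bound.} Let $\widetilde u_h\in S^1_0(\Tri)$ realise $E_u=\NormEnergy{u-\widetilde u_h}$. I would test the first equation of~\eqref{eq:model_reduced} with $w_\CR\coloneq u_h-\widetilde u_h\in S^1_0(\Tri)\subseteq CR^1_0(\Tri)\subseteq H^1_0(\Omega)$ and subtract the Euler--Lagrange identity~\eqref{eq:EulerLagrange} for $u$ with the same test function, obtaining $a(v_h,u_h-\widetilde u_h)+\int_\Omega(\sigma(\grad u_h)-\sigma(\grad u))\cdot\grad(u_h-\widetilde u_h)\dx=0$. Inserting this into~\eqref{eq:convexity}, integrated pointwise with $A=\grad u_h$ and $B=\grad\widetilde u_h$, gives
\[
  \gamma_1\NormEnergy{u_h-\widetilde u_h}^2\le -a(v_h,u_h-\widetilde u_h)+\int_\Omega(\sigma(\grad u)-\sigma(\grad\widetilde u_h))\cdot\grad(u_h-\widetilde u_h)\dx.
\]
Bounding the first term by $\Vert v_h\Vert_Y\Vert u_h-\widetilde u_h\Vert_{H^1(\Tri)}\le\sqrt{1+C_\Friedrichs^2}\,\Vert v_h\Vert_Y\NormEnergy{u_h-\widetilde u_h}$ via the Friedrichs inequality on $H^1_0(\Omega)$, and the second by $\gamma_2\NormEnergy{u-\widetilde u_h}\NormEnergy{u_h-\widetilde u_h}$ via $\operatorname{Lip}(\sigma)\le\gamma_2$, then dividing, yields $\gamma_1\NormEnergy{u_h-\widetilde u_h}\le\sqrt{1+C_\Friedrichs^2}\,\Vert v_h\Vert_Y+\gamma_2 E_u$. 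Combined with the residual bound and the triangle inequality $\NormEnergy{u-u_h}\le E_u+\NormEnergy{u_h-\widetilde u_h}$, this gives $\NormEnergy{u-u_h}+\Vert v_h\Vert_Y\lesssim E_u+E_t$ with a constant depending only on $\gamma_1,\gamma_2$ and $C_\Friedrichs$.

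\emph{Main obstacle.} Everything beyond bookkeeping is in making the estimate \emph{global}. The residual step is linear in the new variable $t$ and only uses the Lipschitz bound, so it is unconditional. The delicate point is the primal error: a direct imitation of the Brezzi-plus-Taylor argument behind Theorem~\ref{thm:ba} fails, because the relevant remainder $\int_\Omega(\sigma(\grad u_h)-\sigma(\grad u)+\Grad\sigma(\grad u_h)\grad(u-u_h))\cdot\gradNC\eta_h\dx$ is in general only controlled by $(\gamma_2-\gamma_1)\NormEnergy{u-u_h}$, which need not be small against the inf-sup constant; the fix is precisely to replace this local Taylor control by the strong monotonicity~\eqref{eq:convexity}, i.e.\ a nonlinear C\'ea argument, which is available here because $\Grad\sigma$ has spectrum uniformly in $[\gamma_1,\gamma_2]$. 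The only things to verify carefully are that the test function $u_h-\widetilde u_h$ lies in both $CR^1_0(\Tri)$ and $H^1_0(\Omega)$ (true, as it lies in $S^1_0(\Tri)$), that the second equation of~\eqref{eq:model_reduced} is not needed, and that only the mesh-independent Friedrichs constant on $H^1_0(\Omega)$ enters.
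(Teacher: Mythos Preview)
Your proof is correct and follows essentially the same approach as the paper: both use the strong monotonicity~\eqref{eq:convexity} to compare $u_h$ with a best approximation in $S^1_0(\Tri)$, rewrite the resulting integral via the first equation of the discrete system tested with the conforming difference $u_h-\widetilde u_h\in S^1_0(\Tri)\subset Y_h$, control the remaining $\sigma(\grad u)-\sigma(\grad\widetilde u_h)$ term by the Lipschitz bound, and bound $\Vert v_h\Vert_Y$ by dPG minimality plus Lipschitz continuity of $b$. The only cosmetic difference is that you invoke the reduced formulation~\eqref{eq:model_reduced} together with the continuous Euler--Lagrange equation, whereas the paper works directly with~\eqref{eq:model_mixed} and $B(x)=F$; since the trace term vanishes on $H^1_0(\Omega)$ test functions, these are the same identity.
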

\begin{proof}
  Given the best-approximation \(x_h^*=(u_h^*,t_h^*)\in X_h\) to \((u,t)\) in \(X_h\) and let \(A=\nabla u_h^*(x)\) and \(B=\nabla u_h(x)\) in \eqref{eq:convexity} and integrate over a.e. \(x\in\Dom\). Then
  \begin{equation*}
%     \label{eq:error_estimate}
    \begin{split}
      \gamma_1 \vvvert u^*_h - u_h \vvvert^2
      &\leq
      \int_\Omega \big( \sigma(\grad u_h^*) - \sigma(\grad u_h) \cdot \grad
        (u_h^* - u_h) \big) \dx\\
      &= b(x_h^*; u_h^* - u_h) - b(x_h; u_h^* - u_h).
    \end{split}
  \end{equation*}
Since \(b(u,t;\pbullet)=F\), the last term is equal to
  \begin{equation}
    \label{eq:error_estimate}
    \begin{split}
      F(u_h^* &- u_h) - b(x_h; u_h^* - u_h) + b(x_h^*; u_h^* - u_h)- b(u; u_h^* - u_h)\\
      &= a(v_h, u_h^* - u_h)
      + \int_\Omega \big( \sigma(\grad u_h^*) - \sigma(\grad u) \big) \cdot
      \grad (u_h^* - u_h) \dx.
    \end{split}
  \end{equation}
  The Lipschitz continuity from Remark~\ref{rem:lipschitz} leads to
  \begin{equation}
    \label{eq:error_sigma}
    \Vert \sigma(\grad u) - \sigma(\grad u_h^*) \Vert_{L^2(\Omega)}
    \leq
    \gamma_2 \vvvert u - u_h^* \vvvert
  \end{equation}
  and the last term in~\eqref{eq:error_estimate} is controlled by
  \begin{equation}
  \label{eq:est_int_sigma}
    \int_\Omega \big( \sigma(\grad u_h^*) - \sigma(\grad u) \big) \cdot \grad
    (u_h^* - u_h) \dx
    \leq
    \gamma_2 \vvvert u - u_h^* \vvvert \, \vvvert u_h^* - u_h \vvvert.
  \end{equation}
  Since \(x_h\) is a global discrete minimizer,
  \begin{align*}
    \Vert v_h \Vert_Y
    &=
    \Vert F- b(x_h;\pbullet) \Vert_{Y_h^*}
    \leq
   \Vert F- b(x_h^*;\pbullet) \Vert_{Y_h^*}
    =
    \Vert b(x; \pbullet) - b(x_h^*; \pbullet) \Vert_{Y_h^*}.
  \end{align*}
 The Lipschitz continuity \eqref{eq:error_sigma} of \(\sigma\) and the structure of the map \(b\) from \eqref{eq:def_b} show that the last term is \(\leq \gamma_2 \vvvert u - u_h^* \vvvert + \Vert t - t_h^* \Vert_{H^{-1/2}(\partial\Tri)}\).
  The combination of \(\Vert v_h \Vert_Y\leq \gamma_2 \vvvert u - u_h^* \vvvert + \Vert t - t_h^* \Vert_{H^{-1/2}(\partial\Tri)}\) with~\eqref{eq:error_estimate} and~\eqref{eq:est_int_sigma} shows
\begin{align*}
    \gamma_1\vvvert u_h^* - u_h \vvvert &\leq \sqrt{1+C_\Friedrichs^2}\Vert v_h \Vert_Y  + \gamma_2 \vvvert u - u_h^* \vvvert.
  \end{align*}
 A triangle inequality
  concludes the proof with explicit constants
\begin{align*}
    \vvvert u - u_h \vvvert &\leq
    (1+\gamma_2(1+\sqrt{1+C_\Friedrichs^2})\big/\gamma_1)
    \inf_{u_\Conf \in S^1_0(\Tri)} \vvvert u - u_\Conf \vvvert\\
    &\phantom{{}\leq{}}+ \gamma_2 \sqrt{1 +C_\Friedrichs^2} \big/\gamma_1
    \inf_{t_0 \in P_0(\Edges)}
    \Vert t - t_0 \Vert_{H^{-1/2}(\partial \Tri)}.
    \quad\qed
  \end{align*}
\end{proof}

The following a~posteriori error estimate holds for any discrete
approximation, and even for inexact solve, and generalizes the built-in
error control despite inexact solve of \cite[Thm.~2.1]{MR3215064} to the
nonlinear model problem at hand.
\begin{theorem}[a~posteriori]
  \label{thm:a_posteriori}
  There exist universal constants \(\kappa\approx 1\approx C_{\textup{d}\Friedrichs}\) such that the exact solution \((u,t)\in X\) of \(B(x)=F\) and any discrete \((v_C,s_0)\in
  S^1_0(\Tri)\times P_1(\Tri)\) satisfy
  \[
    \gamma_1^2 \vvvert u - v_C \vvvert^2
    \leq (1+C_{\textup{d}\Friedrichs}^2) \Vert F-b(v_C,s_0;\pbullet) \Vert_{CR^1_0(\Tri)^*}^2 + \kappa^2\NormLzDom{h_\Tri f}^2.
  \]
\end{theorem}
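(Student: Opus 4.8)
The plan is to combine the strong monotonicity \eqref{eq:convexity} of $\sigma$ with a test against the nonconforming (Fortin) interpolant of the error $e\coloneq u-v_C\in H^1_0(\Omega)$, and then to split the gradient of the interpolation error by the $L^2$ projection $\Pi_0$ onto $\Tri$-piecewise constants. Put $e_\CR\coloneq I_\NC^\loc e\in CR^1_0(\Tri)$ for the interpolation \eqref{eq:nonconf_interpolation}; it lies in $CR^1_0(\Tri)$ since $e\in H^1_0(\Omega)$, and the proof of Proposition~\ref{prop:fortin_model} provides the commutation $\gradNC e_\CR=\Pi_0\grad e$. The inequality \eqref{eq:convexity} with $A=\grad u(x)$ and $B=\grad v_C(x)$, integrated over $\Omega$, and the Euler--Lagrange equation \eqref{eq:EulerLagrange} tested with $e$ show
\[
  \gamma_1\vvvert e\vvvert^2
  \leq\int_\Omega\big(\sigma(\grad u)-\sigma(\grad v_C)\big)\cdot\grad e\dx
  =\int_\Omega f\,e\dx-\int_\Omega\sigma(\grad v_C)\cdot\grad e\dx.
\]

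Since $\sigma(\grad v_C)$ is $\Tri$-piecewise constant, $\int_\Omega\sigma(\grad v_C)\cdot\grad e\dx=\int_\Omega\sigma(\grad v_C)\cdot\gradNC e_\CR\dx$; writing $\int_\Omega f\,e\dx=\int_\Omega f\,e_\CR\dx+\int_\Omega f\,(e-e_\CR)\dx$ and using that the skeleton contribution to $b(v_C,s_0;\pbullet)$ vanishes on $CR^1_0(\Tri)$ (as in the proof of Theorem~\ref{thm:reduced_formulation}(a); cf.\ Remark~\ref{rem:identification_RT_P0Edges}), so that $b(v_C,s_0;e_\CR)=\int_\Omega\sigma(\grad v_C)\cdot\gradNC e_\CR\dx$, this becomes
\[
  \gamma_1\vvvert e\vvvert^2
  \leq\big(F-b(v_C,s_0;\pbullet)\big)(e_\CR)+\int_\Omega f\,(e-e_\CR)\dx.
\]

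For the first term, the dual norm and the discrete Friedrichs inequality, which yields $\Vert w_\CR\Vert_Y^2=\NormLzDom{w_\CR}^2+\NormLzDom{\gradNC w_\CR}^2\leq(1+C_{\textup{d}\Friedrichs}^2)\NormLzDom{\gradNC w_\CR}^2$ on $CR^1_0(\Tri)$, together with $\gradNC e_\CR=\Pi_0\grad e$ give
\[
  \big(F-b(v_C,s_0;\pbullet)\big)(e_\CR)
  \leq\sqrt{1+C_{\textup{d}\Friedrichs}^2}\;\Vert F-b(v_C,s_0;\pbullet)\Vert_{CR^1_0(\Tri)^*}\,\NormLzDom{\Pi_0\grad e}.
\]
For the second term, $e-e_\CR$ has vanishing edge integral means on every $T\in\Tri$, so an elementwise Poincar\'e inequality gives $\NormLz{e-e_\CR}{T}\lesssim h_T\NormLz{\gradNC(e-e_\CR)}{T}=h_T\NormLz{(1-\Pi_0)\grad e}{T}$; summation and the Cauchy--Schwarz inequality produce $\int_\Omega f\,(e-e_\CR)\dx\leq\kappa\,\NormLzDom{h_\Tri f}\,\NormLzDom{(1-\Pi_0)\grad e}$ with a shape-regularity constant $\kappa\approx1$.

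It remains to combine the two estimates. With $p\coloneq\NormLzDom{\Pi_0\grad e}$ and $q\coloneq\NormLzDom{(1-\Pi_0)\grad e}$, orthogonality of $\Pi_0$ gives $p^2+q^2=\vvvert e\vvvert^2$, and with $R\coloneq\Vert F-b(v_C,s_0;\pbullet)\Vert_{CR^1_0(\Tri)^*}$ and $G\coloneq\NormLzDom{h_\Tri f}$ the previous two displays read
\[
  \gamma_1(p^2+q^2)\leq\sqrt{1+C_{\textup{d}\Friedrichs}^2}\,R\,p+\kappa\,G\,q
  \leq\sqrt{(1+C_{\textup{d}\Friedrichs}^2)R^2+\kappa^2G^2}\;\sqrt{p^2+q^2}
\]
by the Cauchy--Schwarz inequality in $\R^2$; dividing by $\sqrt{p^2+q^2}$ (the case $e=0$ being trivial) and squaring gives $\gamma_1^2\vvvert e\vvvert^2\leq(1+C_{\textup{d}\Friedrichs}^2)R^2+\kappa^2G^2$, the claim. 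The delicate point is the oscillation term: it must be controlled by $\NormLzDom{(1-\Pi_0)\grad e}$ rather than by the full $\vvvert e\vvvert$, since only then does the concluding Cauchy--Schwarz step deliver the clean constant $1+C_{\textup{d}\Friedrichs}^2$ in front of the computable residual instead of $2(1+C_{\textup{d}\Friedrichs}^2)$; this rests on the Poincar\'e approximation estimate for $1-I_\NC^\loc$ for functions with vanishing edge means. A secondary point is to identify $\Vert\pbullet\Vert_{CR^1_0(\Tri)^*}$ as the dual of the full broken $H^1$ norm restricted to $CR^1_0(\Tri)$, which is where the discrete Friedrichs constant $C_{\textup{d}\Friedrichs}$ enters.
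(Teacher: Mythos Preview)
Your proof is correct and follows essentially the same route as the paper: monotonicity of $\sigma$, the Euler--Lagrange identity, splitting via the nonconforming interpolant $I_\NC^\loc e$, the discrete Friedrichs inequality for $\Vert I_\NC^\loc e\Vert_Y$, the Poincar\'e-type interpolation estimate for $e-I_\NC^\loc e$, and the final Cauchy--Schwarz in $\R^2$ combined with the $L^2$-orthogonality $\Vert\Pi_0\nabla e\Vert^2+\Vert(1-\Pi_0)\nabla e\Vert^2=\vvvert e\vvvert^2$. Your explicit identification $\gradNC I_\NC^\loc e=\Pi_0\nabla e$ is precisely what the paper calls ``the theorem of Pythagoras'' in its last step.
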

\begin{remark}
 The proof reveals that \(C_{\textup{d}\Friedrichs}\) is the constant in the discrete Friedrichs inequality \cite[p.~301]{MR2373954} \(\Vert\pbullet\Vert \leq C_{\textup{d}\Friedrichs} \Vert \gradNC \pbullet \Vert\) in \(CR^1_0(\mathcal{T})\). The explicit bounds of \(C_{\textup{d}\Friedrichs}\) in \cite{CH17} allow quantitative estimates in \(2\)D and show in particular \(C_{\textup{d}\Friedrichs}\leq 6.24\) for a convex domain with \(\operatorname{diam}(\Dom)\leq 1\) and a triangulation with right isosceles triangles.
\end{remark}
\begin{remark}
  The proof reveals that \(\kappa\) is the constant in interpolation error
  estimate for the nonconforming interpolation operator \(\Vert h_\Tri^{-1}
  (1-I_\NC)v\Vert \leq \kappa \Vert \gradNC (1-I_\NC)v \Vert\) for \(v\in
  H^1(\Omega)\). An estimate with the first positive root $j_{1,1}$ of the
  Bessel function of the first kind in \cite[Thm.~4]{MR3149071} in \(2\)D
  reads $\kappa=(1/48+1/j_{1,1}^2)^{1/2}= 0.29823$.
\end{remark}

\begin{proof}[of Theorem~\ref{thm:a_posteriori}]
  The estimate~\eqref{eq:convexity} with \(A = \grad u(x)\), \(B = \grad
  u_h(x)\), \(e\coloneq u-u_h\), and an integration over a.e. \(x\in\Omega\) leads to
  \begin{equation}
  \label{eq:estimate_error_u}
    \gamma_1 \vvvert e \vvvert^2
    \leq
    \int_\Omega \big( \sigma(\grad u) - \sigma(\grad u_h) \big)
    \cdot \grad e \dx.
  \end{equation}
  Since \((u,t) \in X\) solves \(b(u,t; \pbullet) = F\) in \(Y^*\) and with the nonconforming interpolation operator \eqref{eq:nonconf_interpolation}, this is equal to
  \begin{align*}
    F(e) - \int_\Omega \sigma(\grad u_h) \cdot \grad e \dx
    &= F((1-I_\NC)e) + F(I_\NC e) - \int_\Omega \sigma(\grad u_h) \cdot \gradNC I_\NC e \dx\\
    &= F((1-I_\NC)e) + F(I_\NC e) - b(u_h,t_h;I_\NC e)\\
    &\leq F((1-I_\NC)e) + \Vert F-b(u_h,t_h;\pbullet)\Vert_{CR^1_0(\Tri)^*} \Vert I_\NC e \Vert_{Y_h}.
  \end{align*}
  The interpolation error estimate for the nonconforming interpolation
  operator with constant $\kappa$ \cite[Thm.~4]{MR3149071} yields
  \[
    F((1-I_\NC)e) \leq \kappa \NormLzDom{h_\Tri f} \vvvert e-I_\NC e \vvvert_\NC.
  \]
The discrete Friedrichs inequality \cite[p.~301]{MR2373954} and \(I_\NC e\in CR^1_0(\Tri)\) prove
  \begin{align*}
    \Vert I_\NC e \Vert_{Y_h} &\leq \sqrt{1+C_{\textup{d}\Friedrichs}^2} \vvvert I_\NC e \vvvert_\NC.
  \end{align*}
The Cauchy inequality in \(\R^2\) and the theorem of Pythagoras imply
\begin{align*}
 \gamma_1 \vvvert e\vvvert^2 \leq \big(\kappa^2 \NormLzDom{h_\Tri f}^2 +
 (1+C_{\textup{d}\Friedrichs}^2) \Vert F-
 b(u_h,t_h;\pbullet)\Vert_{CR^1_0(\Tri)^*}^2\big)^{1/2} \vvvert e
 \vvvert_\NC. \quad\qed
\end{align*}
\end{proof}

\subsection{Other nonlinear dPG methods}\label{sec:other_dpg}
This section illustrates the plethora of dPG methodology by introducing the primal mixed, the dual, and the ultraweak dPG method for the nonlinear model problem.
All three methods concern the first-order system of \eqref{eq:EulerLagrange} with the convex function \(\varphi\) and \(\sigma=\Grad (\varphi\circ \vert\pbullet\vert)\) and its dual \(\varphi^*\) so that the  relation \(p=\sigma(\nabla u)\) is equivalent to \(\nabla u =\Grad \varphi^*(\vert p \vert )\sign{p}\) on the continuous level.
Recall the space of functions with piecewise divergence \(H(\Div;\Tri)
\coloneq \prod_{T \in \Tri} H(\Div;T) \) from \cite{MR3279489} as well as the piecewise version \(RT^\NC_k(\Tri)\subset H(\Div;\Tri)\) of \(RT_k(\Tri)\), and the subspace \(S^k_0(\Edges)\equiv S^k_0(\Tri)\restrict{\partial\Tri}\) of
\begin{align*}
  H^{1/2}_0(\partial\Tri)
  &\coloneq
  \{ s = (s_T)_{T\in\Tri}\in  {\textstyle \prod_{T\in\Tri} }
    H^{1/2}(\partial T) \,|\,
    \exists v\in H^1_0(\Dom),\forall T \in \Tri,s_T=(v|_T)|_{\partial
    T}\}.
 \end{align*}
Recall the  {\bf primal nonlinear dPG method} \eqref{eq:dpg} in Section~\ref{sec:dpg_discretization} with \(b\) from \eqref{eq:def_b} and general polynomial degree \(k\geq 0\) and \(m\geq k\) in the discrete spaces
\begin{align*}
 X_h := S^{k+1}_0(\Tri) \times P_k(\Edges)  \text{ and }Y_h := P_{m+1}(\Tri).
\end{align*}
The {\bf primal mixed nonlinear dPG method} departs from a piecewise integration by parts and employs the spaces and discrete subspaces
\begin{align*}
 X&:= L^2(\Omega;\R^n) \times H^1_0(\Tri) \times H^{1/2}(\partial\Tri)\text{ and }Y:= L^2(\Omega;\R^n) \times H^1(\Tri),\\
 X_h &:= P_k(\Tri;\R^n) \times S^{k+1}_0(\Tri) \times P_k(\Edges)  \text{ and }Y_h := P_m(\Tri;\R^n) \times P_{m+1}(\Tri).
\end{align*}
For $(p,u,t)\in X$ and $(q,v)\in Y$, define \eqref{eq:dpg} with \(F(q,v) := (f,v)_{L^2(\Omega)}\) and
\begin{align*}
 b(p,u,t;q,v) &:= (p-\sigma(\nabla u),q)_{L^2(\Omega)}+(p,\gradNC v)_{L^2(\Omega)} -\langle t,v\rangle_{\partial\Tri}.
\end{align*}
%
% The equivalent system \(f+\Div p = 0\) and \(\nabla u=\Grad W^*(p)\)
The {\bf dual nonlinear dPG method} utilizes the spaces and discrete subspaces
\begin{align*}
 X&:= H(\Div;\Omega) \times L^2(\Omega) \times H^{1/2}_0(\partial\Tri)\text{ and }Y:= H(\Div;\Tri) \times L^2(\Omega),\\
 X_h &:= RT_k(\Tri)\times P_k(\Tri) \times S^{k+1}_0(\Edges)\text{ and } Y_h := RT_m^\NC(\Tri) \times P_m(\Tri) .
\end{align*}
For \(F\) as before and $(p,u,s)\in X$ and $(q,v)\in Y$, define \eqref{eq:dpg} with \(b(p,u,s;q,v):=\)
\begin{align*}
 (\Grad \varphi^*(\vert p \vert )\sign{p},q)_{L^2(\Omega)} + (u,\Div_\NC q)_{L^2(\Omega)}-(\Div p,v)_{L^2(\Omega)} -\langle q\cdot\nu, s\rangle_{\partial\Tri}.
\end{align*}
The {\bf ultraweak nonlinear dPG method} utilizes a piecewise integration by parts in both equations of the first-order system and the spaces
\begin{align*}
 X&:= L^2(\Omega;\R^n) \times L^2(\Omega) \times H^{1/2}(\partial\Tri) \times H^{1/2}_0(\partial\Tri)\text{ and } Y:= H(\Div;\Tri) \times H^1(\Tri),\\
 X_h &:= P_k(\Tri;\R^n) \times P_k(\Tri) \times P_k(\Edges) \times S^{k+1}_0(\Edges) \text{ and } Y_h:= RT_m^\NC(\Tri) \times P_{m+1}(\Tri).
\end{align*}
For \(F\) from the above primal mixed method and $(p,u,t,s)\in X$ and $(q,v)\in Y$, define \eqref{eq:dpg} with
\begin{align*}
 b(p,u,t,s;q,v) &:= (\Grad \varphi^*(\vert p \vert )\sign{p},q)_{L^2(\Omega)} + (p,\gradNC v)_{L^2(\Omega)} + (u, \Div_\NC q)_{L^2(\Omega)}\\
 &\quad\quad-\langle q\cdot\nu, s\rangle_{\partial\Tri}-\langle t, v\rangle_{\partial\Tri}.
\end{align*}
The linear version is analysed in \cite{MR2837484,MR3279492,MR3521055,MR3279489}. The four nonlinear dPG methods may be further analysed in the spirit of this section.

\section{Numerical experiments}\label{sec:numerical_experiments}
This section presents numerical experiments with the LS-FEM
of Subsection~\ref{sec:least-squares}.

\subsection{Computational realization}
Given \(f \in L^2(\Omega)\),
\changed{the discrete solution of~\eqref{eq:model_mixed} is}
solved by a Newton scheme with an initial iterate from the solution of
the \changed{scaled} linear Poisson model problem.
Let \(S^1_0(\Tri)\) be endowed with the energy norm
\(\vvvert \pbullet \vvvert\) and \(RT_0(\Tri)\) with
\(\Vert \pbullet \Vert_{H(\Div,\Omega)}\) and let \(\vvvert \pbullet
\vvvert_*\) denote the norm of the dual space of \(S^1_0(\Tri)
\times RT_0(\Tri)\).
The first Fr\'echet derivative \(\Grad\LSfun(f; u_\Conf, p_\RT)\) of
\(\LSfun(f;\pbullet)\) belongs \changed{to} the dual space of \(S^1_0(\Tri)
\times RT_0(\Tri)\).
After at most \(5\) Newton iterations,
every displayed discrete solution \((u_h, p_h)\) in the following
subsections satisfy
\(\vvvert \Grad \LSfun(f; u_h, p_h, \pbullet) \vvvert_{*} = 0\) up to
machine precision.
In the case of successive mesh-refinement, the iteration starts with the
prolongated solution from the coarser triangulation and terminates in at
most \(3\) or \(4\) iterations.

Table~\ref{tab:newton} presents the errors \(\vvvert \Grad\LSfun(f;
u_h^{(j)}, p_h^{(j)}, \pbullet)\vvvert_{*}\) of the Newton iterate
\((u_h^{(j)}, p_h^{(j)})\) for \(j=0,1,\dots,5\) on fixed triangulations of the square
domain from Subsection~\ref{sec:square} and the L-shaped domain from
Subsection~\ref{sec:lshape} with the convex function \(\phi\) from
Example~\ref{ex:density}.a.
The iterations (A) and (B) utilize a uniform triangulation of the square
domain with \(4\,096\) triangles (\(\ndof = 8\,193\)) and an initial
iterate from a Poisson model problem  for (A) and a weighted Poisson problem with constant weight \(2.5\) in (B).
The adaptive mesh of the L-shaped domain with \(3\,450\) triangles (\(\ndof =
6.901\)) in (C) and (D) has been generated by the algorithm from
Subsection~\ref{sec:adaptive_algorithm} below at level \(\ell=12\). Iteration (C) starts with a
weighted Poisson solution with constant factor \(2.5\) and (D) with the
prolongated solution from the previous mesh.

From the very beginning of the Newton iteration, all values in
Table~\ref{tab:newton} provide numerical evidence \changed{for}
Q-quadratic convergence.
% The choice of a constant weight \(\widebar\gamma\) for the solution of the
% initial Poisson model problem according to the bounds \(\gamma_1,
% \gamma_2\) from Example~\ref{ex:density}.a allows to reduce the number of
% iterations by two.
% Although the initial error of the prolongated solution in (D) is larger
% than the Poisson solution in (C), it is immediately available without the
% solution of an additional linear system and the error reduces slightly
% faster. Hence, it is the preferable choice as initial value.
\begin{table}
  \centering
  \pgfplotstableset{
  row sep=\\,
  every head row/.style={
    before row={%
      \toprule
      % & {} & \multicolumn{3}{c}{Square} & & \multicolumn{3}{c}{L-shape}\\
    },
    after row=\midrule,
  },
  every last row/.style={
    after row=\bottomrule,
  },
  columns/Niter/.style={
    int detect,
    column name=\(n_{\textup{iter}}\),
  },
  columns/square1/.style={
    column name={(A)},
    sci, sci zerofill, precision=5, dec sep align,
  },
  columns/square25/.style={
    column name={(B)},
    sci, sci zerofill, precision=5, dec sep align,
  },
  columns/lshape25/.style={
    column name={(C)},
    sci, sci zerofill, precision=5, dec sep align,
  },
  columns/lshapeprol/.style={
    column name={(D)},
    sci, sci zerofill, precision=5, dec sep align,
  },
  % columns/accuracy/.style={
  %   column name={$\Vert \Grad \LSfun(f; u_\ell, p_\ell; \pbullet)
  %     \Vert_{X_\ell^*} $},
  %   sci, sci zerofill, precision=5, dec sep align,
  % },
}
\pgfplotstabletypeset[
  columns={Niter, square1, square25, lshape25, lshapeprol},
]{
     Niter           square1           square25           lshape25         lshapeprol\\
         0  1.6743089691e+01   8.7323006599e+00   6.4312459210e+00   1.1698650723e-01\\
         1  2.2012402501e+00   7.6927360410e-02   4.1943882156e-02   1.9409219111e-03\\
         2  1.5987198526e-01   2.1390880185e-04   8.0426293196e-05   3.3007189794e-06\\
         3  9.6152901290e-04   1.7410138364e-09   3.7427345739e-10   1.1744139631e-11\\
         4  4.1172988346e-08   1.1268899597e-14   6.1115591047e-15   6.2648489945e-15\\
         5  1.1366742864e-14   1.0914249116e-14   5.7081872852e-15   5.9358718790e-15\\
         6  1.1113124931e-14   1.1054977509e-14   5.9276016411e-15   5.8299008737e-15\\
         7  1.0910849304e-14   {}                 {}                 6.0397755279e-15\\
         8  1.1449302786e-14   {}                 {}                 {}              \\
}
\medskip

  \caption{\noindent Convergence history of Newton iteration for
  \(4\) representative examples.}
  \label{tab:newton}
  % Results of one Newton iteration on a fixed uniform triangulation of the
  % square domain with \(8\,192\) triangles (\(\ndof = 16\,385\)) and
  % \(\phi\) from Example~\ref{ex:density}.a.
\end{table}

In order to investigate the uniqueness of discrete solutions,
the minimal and the maximal eigenvalue \(\lambda_{\textup{min}}\) and
\(\lambda_{\textup{max}}\) of the Hessian matrix
\(\Hess \LSfun(f; u_h, p_h; \pbullet, \pbullet)\)
of the least-squares functional is computed, where \((u_h, q_h) \in X_h\)
and \(\lambda \in \R\) satisfy, for all \((\widetilde v_h, \widetilde q_h)
\in X_h\),
\begin{equation}
\label{eq:gevp}
  \Hess \LSfun(f; u_h, p_h; v_h, q_h, \widetilde v_h,
  \widetilde q_h)
  =
  \lambda \big( a_\NC(v_h, \widetilde v_h) +
    (q_h, \widetilde q_h)_{H(\Div,\Omega}\big).
\end{equation}
The value \(\lambda_{\textup{min}}\) is uniformly bounded from zero for the
examples in the following subsections, so that
every computed discrete solution \((u_h, p_h)\) is a
local minimizer.

For any discrete \changed{approximation} \((u_h, p_h)\),
\changed{Theorem~\ref{thm:dpg_as_ls} and~\ref{thm:a_posteriori}} verify
the a~posteriori error estimator
\(\eta^2(\Tri) \coloneq \LS(f; u_h, p_h) + \Vert h_\Tri f
\Vert_{L^2(\Omega)}^2\)
\changed{even for \emph{inexact solve} in its computation}.
In view of a lacking proof in Subsection~\ref{sec:lshape} below that the
computed discrete solution is in fact a \emph{global discrete minimizer}
(at least up to machine precision), it is only by \changed{this universal}
a~posteriori error control that we know that the computed approximations
converge to the exact solution.

\subsection{Numerical example on square domain}
\label{sec:square}
This subsection considers the nonlinear model problem for
the exact solution
\[
  u(x) \coloneq \cos(\pi x_1/2) \cos(\pi x_2/2)
  \quad\text{for } x \in \Omega \coloneq (-1,1)^2
\]
with homogeneous Dirichlet boundary conditions,
\(f \coloneq -\Div(\sigma(\grad u))\), and \(\phi\) from
Example~\ref{ex:density}.a.
This defines the exact stress function \(p\coloneq \sigma(\grad u) \in H(\Div,\Omega)\).

Figure~\ref{fig:square_convergence} displays the error estimator \(\eta_\ell\coloneq \eta(\Tri_\ell)\) at the
discrete solutions \((u_\ell, p_\ell)\) on each level \(\ell\) of a
sequence of uniform triangulations as well as the error to the exact
solution \((u,p)\).
The reference energy \(E(u) = -5.774337908509\) in the energy difference
\(E(u_\ell) - E(u) \geq \gamma_1 \vvvert u - u_\ell \vvvert^2/2\) has been
approximated by the energies of \(P_1\)-conforming finite element solution
with an Aitken \(\Delta^2\) extrapolation.
The eigenvalues of \eqref{eq:gevp} in all experiments of Figure~\ref{fig:square_convergence} satisfy \(1.597\leq \lambda_{\textup{min}}\leq 1.722\) and \(9.943\leq \lambda_{\textup{max}}\leq 16.128\) and so prove that the discrete solutions are local minimizers.
The parallel graphs confirm the equivalence of the built-in error estimator
\(\Vert y_\ell \Vert_{Y} = (\LSfun(f; u_\ell, p_\ell))^{1/2}\)
with the exact error from Theorem~\ref{thm:normequivalence}.
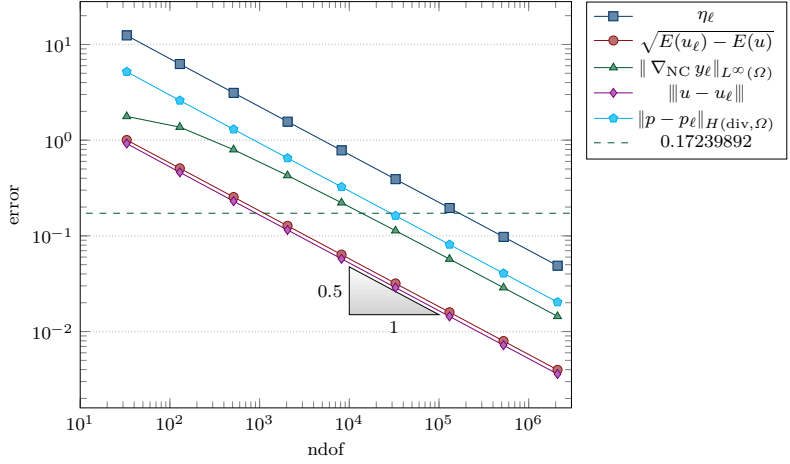
\begin{figure}
  \centering
  \begin{tikzpicture}[scale=0.85]
  \begin{loglogaxis}[convergenceplot,xmin=1e1,xmax=3e6]%,legend style={legend pos=south west}]
    \pgfplotsset{cycle list={
      % {densely dotted, HUblue,  mark=square*,   every mark/.append style={solid, fill=black!25!white}},%
      % {densely dotted, HUred,   mark=*,         every mark/.append style={solid, fill=black!25!white}},%
      % % {densely dotted, HUsand,  mark=otimes*,   every mark/.append style={solid, fill=black!25!white}},%
      % {densely dotted, HUgreen, mark=triangle*, every mark/.append style={solid, fill=black!25!white}},%
      % {densely dotted, violet,  mark=diamond*,  every mark/.append style={solid, fill=black!25!white}},%
      % {densely dotted, cyan,    mark=pentagon*, every mark/.append style={solid, fill=black!25!white}},%
      {HUblue,  mark=square*,   every mark/.append style={fill=HUblue!60!white}},%
      {HUred,   mark=*,         every mark/.append style={fill=HUred!60!white}},%
      % {HUsand,  mark=otimes*,   every mark/.append style={fill=HUsand!60!white}},%
      {HUgreen, mark=triangle*, every mark/.append style={fill=HUgreen!60!white}},%
      {violet,  mark=diamond*,  every mark/.append style={fill=violet!60!white}},%
      {cyan,    mark=pentagon*, every mark/.append style={fill=cyan!60!white}},%
    }};
    \pgfplotstableread{
      ndof	runtime	acc	it	vmax	eta	energyDiff	errorEnergy	errorHdiv
      33	1.51746000e-01	2.82367705e-15	4	1.77099538e+00	1.24270271e+01	1.00377066e+00	9.22517314e-01	5.18951831e+00
      129	2.04279000e-01	2.22906302e-14	4	1.36945252e+00	6.24301387e+00	5.08277295e-01	4.61182169e-01	2.59959908e+00
      513	6.76373000e-01	2.94395671e-14	4	7.95792443e-01	3.12512895e+00	2.54551535e-01	2.30385857e-01	1.29924297e+00
      2049	2.68040700e+00	3.10962928e-14	4	4.27322318e-01	1.56299916e+00	1.27306335e-01	1.15160989e-01	6.49478030e-01
      8193	1.07102850e+01	6.18987431e-14	4	2.21928115e-01	7.81552381e-01	6.36559841e-02	5.75762203e-02	3.24699627e-01
      32769	3.70961940e+01	1.60382855e-12	4	1.13227147e-01	3.90782682e-01	3.18283938e-02	2.87875647e-02	1.62344925e-01
      131073	1.54400105e+02	2.48277275e-13	3	5.72104169e-02	1.95392145e-01	1.59142949e-02	1.43937138e-02	8.11722399e-02
      524289	6.67412778e+02	4.95935021e-13	4	2.87586432e-02	9.76961728e-02	7.95720714e-03	7.19684829e-03	4.05861719e-02
      2097153	3.04734155e+03	9.91062261e-13	4	1.44182099e-02	4.88480989e-02	3.97869185e-03	3.59842307e-03	2.02931046e-02
    }\loadeduniform;
    \addplot table[x=ndof,y=eta] {\loadeduniform};
    \addplot table[x=ndof,y=energyDiff] {\loadeduniform};
    \addplot table[x=ndof,y=vmax] {\loadeduniform};
    \addplot table[x=ndof,y=errorEnergy] {\loadeduniform};
    \addplot table[x=ndof,y=errorHdiv] {\loadeduniform};
    % dummies
    % \addplot table {1 1};
    % \addplot table {1 1};
    % \addplot table {1 1};
    % \addplot table {1 1};
    % \addplot table {1 1};
    %
    \drawslopetriangle{0.5}{1e4}{1.5e-2};
    %
    % constant for a posteriori uniqueness criterion
    \addplot[mark=none, dashed, HUgreen, domain=1:1e7, samples=2] {0.172394989199832};
    \legend{%,,,,,%
      {\(\eta_\ell\)},
      {\(\sqrt{E(u_\ell) - E(u)}\)},
      {\(\Vert \gradNC y_\ell \Vert_{L^\infty(\Omega)}\)},
      {\(\vvvert u - u_\ell \vvvert\)},
      {\(\Vert p - p_\ell \Vert_{H(\Div,\Omega)}\)},
      {\(0.17239892\)}
    };
  \end{loglogaxis}
\end{tikzpicture}
  \caption{Convergence history for a sequence of uniform
  triangulations of the square domain with exact solution \(u\) from
  Subsection~\ref{sec:square}.}
  \label{fig:square_convergence}
\end{figure}

With the Friedrichs constant \(C_\Friedrichs = \sqrt{2}/\pi\) of
the square domain, the criterion~\eqref{eq:uniqueness_criterion} is equivalent
to \(\Vert \gradNC v_h \Vert_{L^\infty(\Omega)} < \gamma_1^2 \operatorname{Lip}(\Grad\sigma)^{-1} (1+C_\Friedrichs^2)^{-1}
= 0.17239892\) and so
Figure~\ref{fig:square_convergence} shows that the criterion~\eqref{eq:uniqueness_criterion} holds for each level \(\ell \geq 6\) and
Theorem~\ref{thm:a_post_unique} implies global uniqueness of the computed \((u_\ell, p_\ell)\). This proves that there exists only one local minimizer in the discrete problem \eqref{eq:dpg}.

\subsection{Adaptive mesh-refinement}
\label{sec:adaptive_algorithm}

The natural adaptive algorithm with collective D\"orfler marking
\cite{MR1393904} utilizes the local error estimator
\changed{\(\eta^2(\Tri, T)\coloneq \NormLz{( I_{n\times n}+S_0) ^{-1/2}\big(\Pi_0
p_\RT - \sigma( \nabla  u_C)  + H_0 f\big)}{T}^2+ \NormLz{\Pi_0 f + \Div
p_\RT}{T}^2 + \Vert h_T f \Vert_{L^2(T)}^2\)} for
any \((u_C,p_\RT)\in S^1_0(\Tri)\times RT_0(\Tri)\) and \(T\in\Tri\) as
follows.
\medskip

\Input Regular triangulation \(\Tri_0\) of the polygonal domain \(\Omega\)
into simplices.
\begin{description}[leftmargin=\widthof{\bfseries for {}}]
  \item[\bfseries for]
    any level \(\ell=0,1,2,\dots\) \Do\\
    \Keyword{Solve} generalized LS-FEM with
    respect to triangulation \(\Tri_\ell\) and solution
    \((u_\ell, p_\ell)\).\\
    \Keyword{Compute} error estimator \(\eta_\ell \coloneq
    \eta(\Tri_\ell)\).\\
    \Keyword{Mark} a subset \(\Marked_\ell \subseteq \Tri_\ell\) of
    (almost) minimal cardinality \(\vert \Marked_\ell \vert\) with
    % \(\theta \eta_\ell^2 \leq \eta_\ell^2(\Marked_\ell)\).\\
    \[
      0.3\,\eta_\ell^2
      \leq \eta_\ell^2(\Marked_\ell) \coloneq
      \sum_{T \in \Marked_\ell} \eta^2(\Tri_\ell, T)
    \]
    \Keyword{Compute} smallest regular refinement \(\Tri_{\ell+1}\) of
    \(\Tri_\ell\) with \(\Marked_\ell \subseteq
    \Tri_\ell\setminus\Tri_{\ell+1}\) by newest-vertex bisection (NVB). \Od
\end{description}
\Output Sequence of discrete solutions \((u_\ell,
p_\ell)_{\ell\in\N_0}\) and triangulations \((\Tri_\ell)_{\ell\in\N_0}\).
\medskip

See \cite{MR2353951} for details on adaptive mesh-refinement and NVB.

\subsection{Numerical example on L-shaped domain}
\label{sec:lshape}
This subsection considers \(f \equiv 1\) on the L-shaped domain \(\Omega
\coloneq (-1,1)^2\setminus [0,1]^2 \subset \R^2\) with homogeneous
Dirichlet boundary data \(u\vert_{\partial \Omega} \equiv 0\) and unknown
exact solution \(u\).
Figure~\ref{fig:lshape_solution} displays the corresponding discrete
solutions \(u_h\) on a uniform triangulation of \(\Omega\) for the
different functions \(\phi\) from Example~\ref{ex:density}.a and b.
\begin{figure}
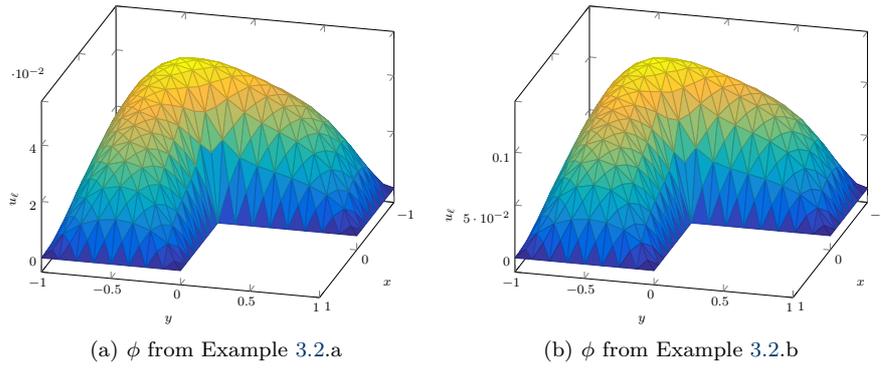

  \centering
  \subfloat[\(\phi\) from Example~\ref{ex:density}.a]{
    \input{gls_lshape_uniform_solution_Ex_a.tex}
  }
  \subfloat[\(\phi\) from Example~\ref{ex:density}.b]{
    \input{gls_lshape_uniform_solution_Ex_b.tex}
  }
  \caption{Solution \(u_h\) for different functions
  \(\phi\) on a uniform triangulation of the L-shaped domain into \(768\)
  (\(\ndof = 1\,537\)).}
  \label{fig:lshape_solution}
\end{figure}

Figure~\ref{fig:lshape_mesh} shows two typical adaptively generated
triangulations with considerable refinement at the re-entrant corner for
different functions \(\phi\).
At first glance, the meshes appear similar and resemble the undisplayed adaptive triangulation from the Poisson model problem.
\begin{figure}
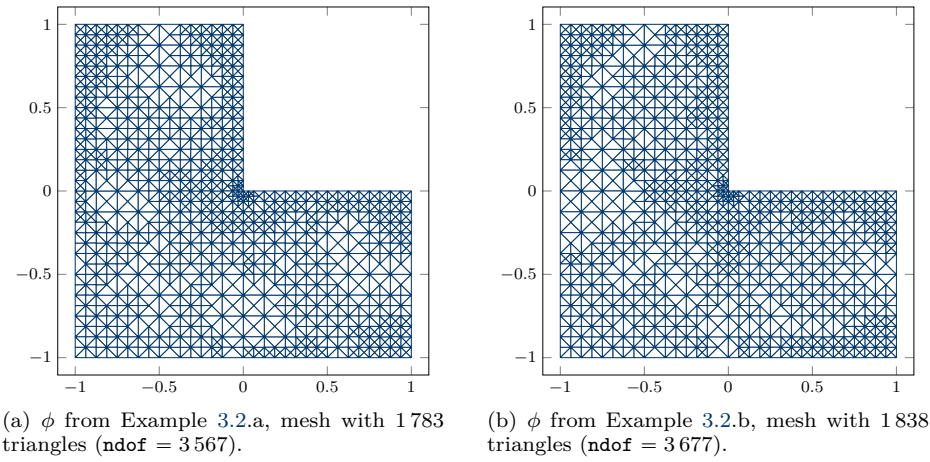

  \subfloat[\(\phi\) from Example~\ref{ex:density}.a, mesh with \(1\,783\)
  triangles (\(\ndof=3\,567\)).]{
    \input{gls_lshape_triangulation_Ex_a.tex}
  }
  \hfill
  \subfloat[\(\phi\) from Example~\ref{ex:density}.b, mesh with \(1\,838\)
  triangles (\(\ndof=3\,677\)).]{
    \input{gls_lshape_triangulation_Ex_b.tex}
  }
  \caption{Adaptively refined triangulation \(\Tri_\ell\) for different
  functions \(\phi\).}
  \label{fig:lshape_mesh}
\end{figure}

For \(\phi\) from Example~\ref{ex:density}.a,
Figure~\ref{fig:lshape_convergence} shows the convergence history plot of
the natural least-squares error estimator \(\eta_\ell=\eta(\Tri_\ell)\) and the difference of the energy
\(E(u_\ell)\) of the solution \(u_\ell\) and a reference energy
\(E(u) = -3.657423002939 \times 10^{-2}\) (computed
by the energies of \(P_1\)-conforming finite element solutions with an
Aitken \(\Delta^2\) extrapolation).

The eigenvalues of \eqref{eq:gevp} in all experiments satisfy \(1.787\leq \lambda_{\textup{min}}\leq 1.914\) and \(16.682\leq \lambda_{\textup{max}}\leq 17.932\) and so prove that all the discrete solutions are local minimizers.
The function \(\phi\) from Example~\ref{ex:density}.b leads to (undisplayed)
similar results.

For the L-shaped domain, the smallest eigenvalue \(\lambda_1 = 9.6397238\) of the
Laplacian with homogeneous Dirichlet boundary conditions yields the Friedrichs constant
\(C_\Friedrichs = 1/\sqrt{\lambda_1} = 0.32208293\).
Since \(\Vert \gradNC v_\ell \Vert_{L^\infty(\Omega)} \geq
\gamma_1^2 \operatorname{Lip}(\Grad\sigma)^{-1} (1+C_\Friedrichs^2)^{-1}
= 0.22650326\) for all level \(\ell \in \N_0\) in
Figure~\ref{fig:lshape_convergence},
Theorem~\ref{thm:a_post_unique} is \emph{not} applicable to any triangulation
\(\Tri_\ell\) of the computation at hand.

To guarantee optimal convergence rates for least-squares FEMs
\changed{with} an
alternative a~posteriori error estimator, the choice of a sufficiently
small bulk parameter is crucial \cite{MR3296614,CR16}. However, for
the natural error estimator with the values of the least-squares
functional, the plain convergence proof of \cite{CPB16} requires the bulk
parameter sufficiently close to \(1\).
For the nonlinear model problem at hand, the convergence history plot in Figure~\ref{fig:lshape_convergence} provides numerical evidence for optimal
convergence rates for adaptive mesh-refinement of Subsection~\ref{sec:adaptive_algorithm}
and suboptimal convergence for uniform refinement.
\begin{figure}
  \begin{tikzpicture}[scale=0.85]
  \begin{loglogaxis}[convergenceplot, xmin=10, xmax=2.5e6]%,legend style={legend pos=south west}]
    \pgfplotsset{cycle list={
      {densely dotted, HUblue,  mark=square*,   every mark/.append style={solid, fill=black!25!white}},%
      {densely dotted, HUred,   mark=*,         every mark/.append style={solid, fill=black!25!white}},%
      % {densely dotted, HUsand,  mark=otimes*,   every mark/.append style={solid, fill=black!25!white}},%
      {densely dotted, HUgreen, mark=triangle*, every mark/.append style={solid, fill=black!25!white}},%
      % {densely dotted, violet,  mark=diamond*,  every mark/.append style={solid, fill=black!25!white}},%
      % {densely dotted, cyan,    mark=pentagon*, every mark/.append style={solid, fill=black!25!white}},%
      {HUblue,  mark=square*,   every mark/.append style={fill=HUblue!60!white}},%
      {HUred,   mark=*,         every mark/.append style={fill=HUred!60!white}},%
      % {HUsand,  mark=otimes*,   every mark/.append style={fill=HUsand!60!white}},%
      {HUgreen, mark=triangle*, every mark/.append style={fill=HUgreen!60!white}},%
      % {violet,  mark=diamond*,  every mark/.append style={fill=violet!60!white}},%
      % {cyan,    mark=pentagon*, every mark/.append style={fill=cyan!60!white}}%
    }};
    \pgfplotstableread{
      ndof	runtime	acc	it	vmax	eta	energyDiff
      25	4.60418000e-01	5.23583411e-14	4	3.02521008e-01	1.77434343e+00	1.50185837e-01
      97	2.14552000e-01	9.25977240e-16	4	2.41428368e-01	8.99841184e-01	8.56997678e-02
      385	4.93834000e-01	1.24341870e-15	4	2.62655884e-01	4.54839693e-01	4.69391549e-02
      1537	1.88351900e+00	2.81073788e-15	4	3.05021958e-01	2.30244699e-01	2.58594718e-02
      6145	7.55094600e+00	5.51128023e-15	4	3.62018877e-01	1.17121260e-01	1.45470254e-02
      24577	3.16183030e+01	1.10428650e-14	4	4.44130608e-01	6.00522691e-02	8.38813932e-03
      98305	1.37740859e+02	2.18956913e-14	4	5.49122671e-01	3.11417156e-02	4.94832326e-03
      393217	5.79547617e+02	4.37056039e-14	4	6.80504833e-01	1.63956006e-02	2.97139675e-03
      1572865	2.45713386e+03	8.77440255e-14	4	8.44255461e-01	8.79738605e-03	1.80314013e-03
    }\loadeduniform;
    \pgfplotstableread{
      ndof	runtime	acc	it	vmax	eta	energyDiff
      25	4.61659000e-01	5.23583411e-14	4	3.02521008e-01	1.77434343e+00	1.50185837e-01
      73	1.90479000e-01	6.17467819e-16	4	2.39229919e-01	1.16862623e+00	8.65876998e-02
      93	1.34841000e-01	7.67899381e-16	3	2.41877768e-01	9.33705890e-01	8.61643460e-02
      197	2.61637000e-01	8.91450268e-16	4	2.61221690e-01	7.23349583e-01	6.57856492e-02
      305	3.26940000e-01	1.67484869e-12	4	2.60379617e-01	5.36600483e-01	5.15232517e-02
      507	5.11429000e-01	1.84981567e-13	3	2.66596870e-01	4.22113425e-01	4.44501141e-02
      825	1.00202400e+00	2.04459591e-15	4	3.05252923e-01	3.47965928e-01	3.27712915e-02
      1113	1.11171800e+00	5.74660276e-15	3	3.04178977e-01	2.90703984e-01	2.90882797e-02
      1695	2.11022600e+00	3.16499027e-15	4	3.62431062e-01	2.37688781e-01	2.29063587e-02
      2253	2.23427500e+00	3.29360532e-15	3	3.62878682e-01	2.01608293e-01	2.08407534e-02
      3567	4.36658400e+00	4.45740486e-15	4	4.46388940e-01	1.67820352e-01	1.55952370e-02
      4635	4.70117300e+00	5.18318010e-15	3	4.46367957e-01	1.42142111e-01	1.41184709e-02
      6901	8.64875700e+00	6.24182124e-15	4	5.52946641e-01	1.17619757e-01	1.13397681e-02
      8961	1.13425000e+01	7.14108324e-15	4	6.86704061e-01	1.00859909e-01	1.01823122e-02
      13851	1.76438030e+01	9.49877233e-15	4	8.52682708e-01	8.55511932e-02	7.95780818e-03
      17903	1.88043940e+01	1.06315632e-14	3	8.52686548e-01	7.32109295e-02	7.10088129e-03
      24033	3.11965900e+01	1.26318388e-14	4	1.05784946e+00	6.26634286e-02	5.85909448e-03
      32845	4.34223100e+01	1.47341829e-14	4	1.31792446e+00	5.34587492e-02	5.27685704e-03
      46037	6.10248500e+01	1.76907565e-14	4	1.63939564e+00	4.61959209e-02	4.28643901e-03
      64923	7.05435890e+01	2.11031685e-14	3	1.63929669e+00	3.91435702e-02	3.67164872e-03
      78045	8.60128990e+01	2.15027929e-14	3	1.63931944e+00	3.40002515e-02	3.45366858e-03
      114345	1.28115024e+02	4.48731729e-13	3	2.04414127e+00	2.90478152e-02	2.76085913e-03
      142623	1.61998489e+02	1.38875424e-13	3	2.56032857e+00	2.52692023e-02	2.58588063e-03
      213023	2.60959973e+02	5.33946286e-14	3	3.20222278e+00	2.17693245e-02	2.01213375e-03
      275245	2.79274632e+02	2.67048646e-12	3	3.20222691e+00	1.87942073e-02	1.79409184e-03
      346123	4.45504583e+02	5.14357336e-14	3	4.01227609e+00	1.63757624e-02	1.50183957e-03
      484671	6.32331524e+02	6.03663576e-14	3	5.04878301e+00	1.41360007e-02	1.34247863e-03
      595711	7.77882150e+02	6.90915811e-14	3	6.34879547e+00	1.23624475e-02	1.19313005e-03
      884627	1.17729788e+03	8.08823960e-14	3	6.34235067e+00	1.06816361e-02	9.70694639e-04
      1114109	1.54366993e+03	8.62715091e-14	3	6.34232799e+00	9.30462641e-03	8.73111507e-04
    }\loadedadaptive;
    \addplot table[x=ndof,y=eta] {\loadeduniform};
    \addplot table[x=ndof,y=energyDiff] {\loadeduniform};
    \addplot table[x=ndof,y=vmax] {\loadeduniform};
    \addplot table[x=ndof,y=eta] {\loadedadaptive};
    \addplot table[x=ndof,y=energyDiff] {\loadedadaptive};
    \addplot table[x=ndof,y=vmax] {\loadedadaptive};
    %
    % constant for a posteriori uniqueness criterion
    % \addplot[mark=none, dashed, black, domain=1:1e7, samples=2] {0.172394989199832};
    \addplot[mark=none, dashed, HUgreen, domain=1:1e7, samples=2] {0.226503256736024};
    \drawslopetriangle{0.5}{3e4}{1e-3};
    \drawswappedslopetriangle{0.4}{3e5}{1e-2};
    \legend{,,,%
      {\(\eta_\ell\)},
      {\(\sqrt{E(u_\ell) - E(u)}\)},
      {\(\Vert \gradNC y_\ell \Vert_{L^\infty(\Omega)}\)},
      {\(0.22650326\)}
    };
  \end{loglogaxis}
\end{tikzpicture}
  \caption{Convergence history for adaptive mesh-refinement (solid lines)
    and uniform mesh-refinement (dotted lines)
    with \(\phi\) from Example~\ref{ex:density}.a.}
  \label{fig:lshape_convergence}
\end{figure}
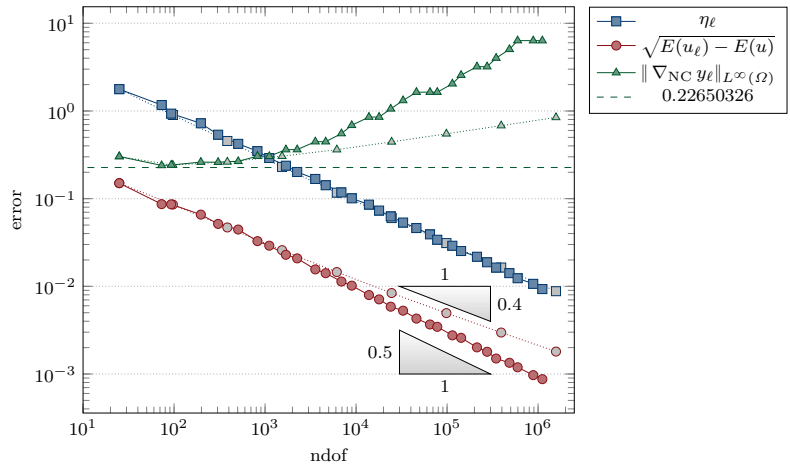

%% END OF CONTENT %%%%%%%%%%%%%%%%%%%%%%%%%%%%%%%%%%%%%%%%%%%%%%%%%%%%%%%%%%%%

%%% BIBLIOGRAPHY %%%%%%%%%%%%%%%%%%%%%%%%%%%%%%%%%%%%%%%%%%%%%%%%%%%%%%%%%%%%%%
\bibliography{PaperALC}

\end{document}